%% file: main.tex
\documentclass[11pt, a4paper]{amsart}

\usepackage{amsmath, amsthm, amssymb}
\usepackage{txfonts, mathrsfs}
\usepackage[usenames]{color}
\usepackage{psfrag}
\usepackage{graphicx}
\usepackage{todonotes}
\usepackage{comment}
\usepackage[normalem]{ulem}
\usepackage[hidelinks, pdfencoding=auto, psdextra]{hyperref}
\usepackage{cleveref}
\usepackage{microtype}
\usepackage{bbm}
\usepackage{booktabs}
\usepackage{longtable}

\usepackage[final]{showlabels}

\numberwithin{equation}{section}

\newtheorem{thm}{Theorem}[section]

\newtheorem{lem}[thm]{Lemma}

\newtheorem{prop}[thm]{Proposition}

\theoremstyle{definition}
\newtheorem{defn}[thm]{Definition}
\newtheorem{example}[thm]{Example}

\theoremstyle{remark}
\newtheorem{rmrk}[thm]{Remark}
\newtheorem{quest}[thm]{Question}

\newcommand{\ie}{\emph{i.e.}}

\newcommand{\ind}{\mathbbm{1}}

\newcommand{\N}{\mathbb{N}}

\newcommand{\R}{\mathbb{R}}

\newcommand{\bM}{\mathbf{M}}
\newcommand{\bN}{\mathbf{N}}
\newcommand{\bF}{\mathbf{F}}
\newcommand{\bP}{\mathbf{P}}
\newcommand{\bCH}{\mathbf{CH}}

\newcommand{\scrP}{\mathscr{P}}
\newcommand{\scrS}{\mathscr{S}}

\newcommand{\scrH}{\mathscr{H}}
\newcommand{\scrD}{\mathscr{D}}
\newcommand{\scrM}{\mathscr{M}}
\newcommand{\wedgeop}{\wedge}

\DeclareMathAlphabet\euscr{U}{eus}{m}{n}
\newcommand{\niceBV}{\euscr{BV}}

\DeclareMathOperator{\diam}{diam}
\DeclareMathOperator{\dist}{dist}
\DeclareMathOperator{\diver}{div}
\DeclareMathOperator{\reg}{reg}

\DeclareMathOperator{\rmLip}{Lip}

\begin{document}

\title{Non-absolute integration and application to Young~geometric integration}
\keywords{}
\subjclass[2020]{49Q15, 60L99, 28A75, 26A39}

\author[Ph. Bouafia]{Philippe Bouafia}

\address{F\'ed\'eration de Math\'ematiques FR3487 \\
  CentraleSup\'elec \\
  3 rue Joliot Curie \\
  91190 Gif-sur-Yvette
}

\date{\today}

\email{philippe.bouafia@centralesupelec.fr}

\begin{abstract}
We survey several non-absolutely convergent integrals, including the Henstock–Kurzweil and Pfeffer integrals, and use ideas from these theories to investigate the problem of multidimensional Young integration. We further present results on Young geometric integration, namely the integration of certain generalized differential forms over $m$-dimensional subsets of $\R^d$. This is achieved by introducing appropriate notions of chains and cochains, in the spirit of Whitney's geometric integration theory.
\end{abstract}

\maketitle
\tableofcontents{}

\section{Introduction}

The Henstock–Kurzweil integral is a conditionally convergent integral that allows one to integrate, on $[0, 1]$, functions that are more general than those that are Lebesgue integrable. Although it is equivalent to the Denjoy–Perron integral, its discovery in the late 1950s generated considerable interest, as it lends itself more naturally than its predecessors to generalizations in dimensions $d \geq 2$.

In this paper, we survey several extensions of the Henstock–Kurzweil theory, with particular emphasis on the Pfeffer integral, which enjoys a number of remarkable properties. Several accounts of these theories already exist in the literature \cite{Pfef, Pfef2, DePauw2004_divergence}; here, our aim is instead to focus on their functional-analytic aspects. A typical feature of non-absolutely convergent integrals is that they give rise to spaces of integrable functions that are normed but not complete, which makes their analysis delicate. In the case of the Pfeffer integral, the completion of this space can be identified with a space of distributions of order $-1$ known (in the context of Pfeffer integration) as charges, which will play a central role throughout this article.

Starting from Section~\ref{sec:sch}, we show how to construct higher-dimensional Young-type integrals by exploiting the properties of the space $CH([0,1]^d)$ of strong charges.
Several extensions of the one-dimensional Young integral have already been proposed in the literature (see, for instance, \cite{Towghi2002, QuerSardanyons_Tindel_2007_wave_fractional_Brownian_sheet, ChoukGub2014, Harang2020}).
The approach we take as a reference is the Züst integral, which allows for the integration of Hölder differential forms:

\begin{thm}[Züst, 2011~\cite{Zust}]
Let $f \in C^\gamma([0,1]^d)$ and $g_i \in C^{\beta_i}([0,1]^d)$ for
$i \in \{1,\dots,d\}$. If
\begin{equation}
\label{eq:ZustYoung}
\gamma + \beta_1 + \cdots + \beta_d > d,
\end{equation}
then the integral
\[
\int_{[0,1]^d} f \, dg_1 \wedge \cdots \wedge dg_d
\]
is well defined as a certain limit of Riemann-Stieltjes sums.
\end{thm}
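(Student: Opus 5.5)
Following Züst, the plan is a multidimensional sewing lemma based on dyadic refinement, with the Young condition \eqref{eq:ZustYoung} supplying summability. For an axis-parallel cube $Q$ with vertices $v$, I would define the elementary Riemann–Stieltjes-type quantity
\[
\Phi(Q) = f(c_Q)\,\det\bigl(\text{discrete gradients of } (g_1,\dots,g_d) \text{ on } Q\bigr),
\]
or, more intrinsically, $\Phi(Q)=\int_{\partial Q} f\, g_1\, dg_2\wedge\cdots\wedge dg_d$ computed recursively on faces. The simplest consistent choice, which I would use, is the multilinear approximation: replace each $g_i$ by its multilinear interpolant $\tilde g_i$ on $Q$ and $f$ by $f(x_Q)$, and set $\Phi(Q)=f(x_Q)\int_Q d\tilde g_1\wedge\cdots\wedge d\tilde g_d$. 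The Riemann sums are then $S_n=\sum_{Q\in\mathcal D_n}\Phi(Q)$ over the dyadic cubes of side $2^{-n}$, and the integral is defined as $\lim S_n$.

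The key step is to bound $|S_{n+1}-S_n|$. For a cube $Q$ of side $r=2^{-n}$ with children $Q_1,\dots,Q_{2^d}$, I would write $\delta\Phi(Q)=\Phi(Q)-\sum_j\Phi(Q_j)$ and show
\[
|\delta\Phi(Q)|\lesssim r^{\gamma+\beta_1+\cdots+\beta_d}\,[f]_\gamma[g_1]_{\beta_1}\cdots[g_d]_{\beta_d}.
\]
Since there are $2^{nd}$ cubes, this gives $|S_{n+1}-S_n|\lesssim 2^{-n(\gamma+\sum\beta_i-d)}$, which is summable exactly under \eqref{eq:ZustYoung}. So $S_n$ is Cauchy and the limit exists, with the bound $|\int - \Phi([0,1]^d)|\lesssim$ the product of Hölder seminorms.

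The main obstacle is the local estimate for $\delta\Phi(Q)$. The naive bound only gives $r^{\sum\beta_i}$ for the $dg$ part, and this has to be multiplied by the oscillation $r^\gamma$ of $f$. One therefore needs cancellation: $\sum_j\int_{Q_j}d\tilde g_1\wedge\cdots\wedge d\tilde g_d$ must equal $\int_Q d\tilde g_1\wedge\cdots$ up to an error that also carries Hölder factors from all the $g_i$. The clean way to get this is Stokes' theorem, which reduces $\int_Q dg_1\wedge\cdots\wedge dg_d$ to the boundary integral $\int_{\partial Q} g_1\,dg_2\wedge\cdots\wedge dg_d$. Then I would argue by induction on $d$: the $(d-1)$-dimensional integral on each face is already well defined, with an a priori estimate, by the induction hypothesis, which has the one-dimensional Young integral as its base case. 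I would also use the fact that adding constants to $g_1$ changes nothing, so each $g_1$ factor can be replaced by its oscillation $\lesssim r^{\beta_1}$. Once the $d$-dimensional integral of $1\cdot dg$ is shown to be additive over the subdivision (boundary terms on interior faces cancel), $\delta\Phi(Q)$ becomes $\sum_j (f(x_Q)-f(x_{Q_j}))\cdot(\text{integral over }Q_j)$. This gives the desired $r^\gamma\cdot r^{\sum\beta_i}$ up to constants, although getting the inductive estimate uniform in the face geometry will take care. Finally, independence of the choice of tags and convergence along general (non-dyadic) partitions would follow from the same telescoping bound.
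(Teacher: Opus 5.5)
Your overall architecture is sound: dyadic sewing of $f(x_Q)$ times a local increment of $dg_1\wedge\cdots\wedge dg_d$, with that increment controlled through Stokes and induction on $d$. The gap is that the $\Phi$ you actually commit to, built from the multilinear interpolants $\tilde g_i^Q$ on each cube, makes the key step false.

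Put $A(Q)=\int_Q d\tilde g_1^Q\wedge\cdots\wedge d\tilde g_d^Q$. Your identity $\delta\Phi(Q)=\sum_j (f(x_Q)-f(x_{Q_j}))A(Q_j)$ requires $A(Q)=\sum_j A(Q_j)$, and this fails. The children's interpolants agree on faces interior to $Q$, but on $\partial Q$ they are only piecewise multilinear, whereas the parent's interpolant is multilinear.

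For $d=2$ and $G=(g_1,g_2)$, $A(Q)=\int_{\partial Q}\tilde g_1\,d\tilde g_2$ is the trapezoidal sum $\sum \tfrac12(g_1(a)+g_1(b))(g_2(b)-g_2(a))$ over the edges $[a,b]$, i.e.\ the signed area of the quadrilateral $G(\text{vertices})$. Consequently $A(Q)-\sum_j A(Q_j)$ is, up to sign, the sum over the four edges of $\partial Q$ of the signed areas of the triangles $G(a)G(m)G(b)$, where $m$ is the midpoint. This is only $O(r^{\beta_1+\beta_2})$ and is nonzero in general. For instance, on $[0,1]^2$ take $g_2=x$ and $g_1=\phi(x)(1-y)$, with $\phi$ the tent function, $\phi(0)=\phi(1)=0$, $\phi(\tfrac12)=1$; then $A(Q)=0$ but $\sum_jA(Q_j)=\tfrac12$.

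With $f\equiv 1$, your claimed bound carries the factor $[f]_\gamma=0$, so it would force this defect to vanish. Summing defects of size $r^{\sum\beta_i}$ over the $2^{nd}$ cubes needs $\sum\beta_i>d$, and \eqref{eq:ZustYoung} only gives $\sum\beta_i>d-\gamma$. Whatever cancellation might rescue the multilinear sums happens between neighbouring cubes across shared faces, which a cube-by-cube bound on $\delta\Phi(Q)$ cannot see. The discrete-determinant variant has the same defect. Your ``intrinsic'' formula $\int_{\partial Q}fg_1\,dg_2\wedge\cdots\wedge dg_d$ also has $f$ in the wrong place: summed over cubes it telescopes to $\int_{\partial[0,1]^d}fg_1\,dg_2\wedge\cdots\wedge dg_d$, the integral of $d(fg_1)\wedge dg_2\wedge\cdots\wedge dg_d$.

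Your last paragraph is reaching for the fix, but it has to be built into the definition. Take $\Phi(Q)=f(x_Q)\mu(Q)$, where $\mu(Q)=\int_{\partial Q}g_1\,dg_2\wedge\cdots\wedge dg_d$ is the sum over the $2d$ oriented faces of the $(d-1)$-dimensional integrals supplied by induction. The induction hypothesis applies since $\beta_1+\cdots+\beta_d>d-\gamma\ge d-1$, and likewise at every lower level. It must carry four things: existence on dyadic faces, additivity under dyadic subdivision, independence of tags, and the Young--Lo\`eve bound $|\int_F h\,dk_1\wedge\cdots\wedge dk_{d-1}-h(x_F)\mu'(F)|\lesssim r^{\beta_1+\cdots+\beta_d}$ on faces of side $r$. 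Rescaling disposes of your worry about face geometry. With this definition:
\begin{itemize}
\item $\mu$ is exactly additive, because interior faces cancel.
\item $\int_{\partial Q}1\,dg_2\wedge\cdots\wedge dg_d=0$, because the $(d-2)$-faces cancel in pairs. This justifies replacing $g_1$ by $g_1-g_1(x_Q)$ and gives $|\mu(Q)|\lesssim[g_1]_{\beta_1}\cdots[g_d]_{\beta_d}r^{\sum\beta_i}$.
\item $\delta\Phi(Q)=\sum_j(f(x_Q)-f(x_{Q_j}))\mu(Q_j)$ now holds exactly, and the rest of your argument goes through.
\end{itemize}
Repaired this way, your proof is in essence the inductive scheme of Z\"ust's original paper.

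The paper does not prove the statement directly; it recovers it. It obtains $dg_1\wedge\cdots\wedge dg_d$ from Theorem~\ref{thm:wedge}, iterated, as a $\sigma$-fractional $d$-charge with $\sigma=\sum\beta_i-(d-1)$; the intermediate Young conditions follow from \eqref{eq:ZustYoung} because all exponents are at most $1$. This charge plays the role of your $\mu$, and its $\sigma$-fractionality gives your bound $r^{\sum\beta_i}$ on cubes. It is built by Littlewood--Paley decompositions and Lemma~\ref{lem:alp} rather than by induction over faces. The Riemann--Stieltjes sums $\sum_K f(x_K)\mu(K)$ are then sewn by Lemma~\ref{lem:sewing}, since $\gamma+\sigma>1$. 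That route gives weak* continuity and works in every degree and codimension; yours is elementary and produces the Riemann--Stieltjes sums directly.
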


The classical Young integral~\cite{Youn} is recovered in the case \(d=1\).
In recent years, this integral has attracted significant attention.
For instance, \cite{AlbeStepTrev} provides an alternative construction of the two-dimensional Züst integral,
while \cite{Jaffard2025Holder} investigates the Hölder regularity of Hölder differential forms arising in the Züst integral.
Since Züst's original work, most contributions have focused on this integral,
largely due to its connections with the Gromov--Hölder problem for the Heisenberg group;
see, for example, \cite{HajlaszMirraSchikorra2025HoelderHeisenberg, BaloghKozhevnikovPansu}.

Condition~\eqref{eq:ZustYoung} is known to be optimal (improving upon it falls within the scope of rough ``sheet'' theory). From one point of
view, this is somewhat unsatisfactory, as the regularity requirements
become increasingly stringent as the dimension $d$ grows. It is
precisely this restrictive Young regime, however, that we seek to investigate
more closely in the present work.

In Section~4, we consider Young integration over (generalized) $m$-dimensional domains in $[0,1]^d$, building on two existing frameworks: Whitney's geometric integration theory, and the more recent theory of Moonens, De Pauw, and Pfeffer, which establishes a duality between normal currents with charges in middle dimension. The latter provides a functional-analytic setting for non-absolute integration in nonzero codimension, and allows the development of an exterior calculus for non-smooth differential forms, generalizing the Züst integral.

We have not attempted to state results in their utmost generality. For instance, most results are formulated on the cube $[0, 1]^d$, even though this restriction is often inessential and can be removed at the price of additional technicalities. More generally, several arguments are presented as outlines rather than fully optimized proofs. Our intention has been to emphasize the main ideas and we provide accurate references to the literature where complete and more general treatments can be found.

This paper grew out of a talk given at the 2025 Summer School
\emph{Maximal Functions \& Differentiation of Integrals in}
$\R^n$, held at Westlake University in Hangzhou. Beyond classical
material, most of the results presented here are drawn from~\cite{BouaDePa, Bouafia2025Young, Boua2}, and no new results are claimed.
The author warmly thanks the organizers for the invitation and for their
generous hospitality, and is especially grateful to Thierry De Pauw for his careful proofreading.

\input{nonabsint}

\input{strongcharges}

\input{geomint}

\begin{appendix}
\section{Notation index}
\begin{center}
\begin{longtable}{p{.155\textwidth}p{.63\textwidth}p{.13\textwidth}}
\toprule
Object & Meaning & Ref. \\
\midrule
\endhead
\bottomrule
\endfoot
$A \ominus B$ & Symmetric difference between sets $A$ and $B$ & \\
$|B|$ & Lebesgue (outer) measure of $B \subset \R^d$ & \\
$\|B\|$ & perimeter of $B$ & Def~\ref{def:deGiorgiVar} \\
$\scrH^m$ & $m$-dimensional (outer) Hausdorff measure & \\
$ (\mathrm{HK}) \int$ & Henstock-Kurzweil integral & Def~\ref{def:HK}\\
$ (\mathrm{H}) \int$ & Henstock integral & Def~\ref{def:Hintegral}\\
$ (\mathrm{Pf}) \int$ & Pfeffer or Pfeffer-Stieltjes integral & Def~\ref{def:pfef}, Def~\ref{def:pfefS}\\
$ (\mathrm{Y}) \int$ & Young integral (with respect to a fractional charge) & Thm~\ref{thm:young}\\
$C_0[0, 1]$ & space of continuous functions on $[0, 1]$ vanishing at~$0$ & Subsec~\ref{subsec:topHK} \\
$\Delta_f(I)$ & rectangular increment of $f$ on $I$ (rectangle or rectangular figure) & Eq~\eqref{eq:rectIncr}, Subsec~\ref{subsec:incr}\\
$HK[0,1]$ & space of Henstock-Kurzweil integrable functions & Subsec~\ref{subsec:topHK} \\
$H([0, 1]^d)$ & space of Henstock integrable functions on $[0, 1]^d$ & Subsec~\ref{subsec:Henstock} \\
$\mathrm{Pf}([0, 1]^d)$ & space of Pfeffer integrable functions on $[0, 1]^d$ & Subsec~\ref{subsec:pfef} \\
$BV[0, 1]$ & space of functions of bounded essential variation on $[0, 1]$ & Def~\ref{def:variation}\\
$BV([0, 1]^d)$ & space of functions $\R^d \to 0$ of bounded variation, vanishing a.e. outside $[0, 1]^d$ & Subsec~\ref{subsec:Ch} \\
$BV^\infty([0, 1]^d)$ & $BV([0, 1]^d) \cap L^\infty([0, 1]^d)$ & Subsec~\ref{subsec:Ch}\\
$\niceBV([0, 1]^d)$ & algebra of $BV$-subsets of $[0, 1]^d$ & Subsec~\ref{subsec:Ch} \\
$\mathrm{V}(g)$ & Vitali variation of a function $g \colon [0, 1]^d \to \R$ & Def~\ref{def:variation}, Eq~\eqref{eq:vitali} \\
$Du$ & distributional gradient measure of a function $u$ of bounded variation & Def~\ref{def:deGiorgiVar} \\
$\|Du\|$ & total variation measure of $Du$ & Def~\ref{def:deGiorgiVar}\\
$\partial T$ & boundary of a flat $m$-chain $T$ & Subsec~\ref{subsec:whitney}\\
$\bM(T)$ & mass of $T$ & Subsec~\ref{subsec:whitney}\\
$\|T\|$ & density measure associated to a current $T$ of finite mass & Eq~\eqref{eq:massfinie}\\
$\vec{T}$ & $m$-vectorfield associated to an $m$-current $T$ of finite mass & Eq~\eqref{eq:massfinie}\\
$\bF(T)$ & flat norm of $T$ & Subsec~\ref{subsec:whitney}\\
$\bN(T)$ & normal mass of $T$ & Subsec~\ref{subsec:normal} \\
$\bP_m([0, 1]^d)$ & space of polyhedral $m$-chains & Subsec~\ref{subsec:whitney}\\
$\bF_m([0, 1]^d)$ & space of flat $m$-chains & Def~\ref{def:flat}\\
$\bN_m([0, 1]^d)$ & space of normal $m$-currents & Subsec~\ref{subsec:normal}\\
$\bF_m^\gamma([0, 1]^d)$ & space of $\gamma$-fractional $m$-currents & Def~\ref{def:fractCurrent} \\
$\bF^m([0, 1]^d)$ & space of flat $m$-cochains & Def~\ref{def:flat} \\
$\bCH^m([0, 1]^d)$ & space of $m$-charges & Def~\ref{def:chm}\\
$\bCH^{m, \gamma}([0, 1]^d)$ & space of $\gamma$-fractional $m$-charges & Def~\ref{def:chgamma}\\
\end{longtable}
\end{center}

\end{appendix}

\bibliographystyle{amsalpha} 

\bibliography{phil.bib}

\end{document}

%% file: nonabsint.tex
\section{Non-absolute integrals and the space of (strong) charges}

\subsection{Henstock-Kurzweil integration} An integration theory is called \emph{non-absolute} when integrability does not imply integrability of the absolute value, so that convergence can rely on cancellation effects. In contrast, the Lebesgue integral is an absolute theory: a function $f$ is Lebesgue-integrable if and only if $|f|$ is. One motivation for considering non-absolute integrals is that the Lebesgue integral fails to integrate every derivative. Indeed, one easily establishes that the derivative of
\[
x \in [0, 1] \mapsto \begin{cases}
 x^2 \sin(1/x^2) & \text{if } x \neq 0 \\
 0 & \text{if } x = 0
\end{cases}
\]
is not in $L^1([0, 1])$. The Henstock-Kurzweil integral is the archetypal and best understood example of a non-absolute integral; it integrates every derivative and strictly extends the Lebesgue integral. It arises from a modest but effective improvement of the Riemann integral, in which the uniform control of partition sizes is replaced by a local, pointwise condition governed by gauges.

Let us make precise statements. A \emph{tagged partition} $\scrP$ of $[0, 1]$ is a finite subdivision $0 = a_0 < a_1 < \cdots < a_n = 1$, together with a choice of tags $x_i \in [a_i, a_{i+1}]$ for each $i \in \{0, 1, \dots, n-1\}$. Given a function $f \colon [0, 1] \to \R$, its \emph{Riemann sum} over $\scrP$ is simply
\[
\scrS(f, \scrP) = \sum_{i=0}^{n-1} f(x_i) (a_{i+1} - a_i)
\]
A \emph{gauge} (over $[0, 1]$) is any positive function $\delta \colon [0, 1] \to \R^{> 0}$ with no further assumption---measurability is not even required.
The tagged partition $\scrP$ is said to be \emph{$\delta$-fine} whenever $\diam [a_i, a_{i+1}] < \delta(x_i)$ for all $i \in \{0, 1, \dots, n-1\}$.

\begin{defn}
\label{def:HK}
  A function $f \colon [0, 1] \to \R$ is said to be \emph{Henstock-Kurzweil integrable} (with integral $I$) whenever for all $\varepsilon > 0$, there exists a gauge $\delta$ such that
  \[
  \left| \scrS(f, \scrP) - I \right| \leq \varepsilon
  \]
  for all $\delta$-fine tagged partitions $\scrP$ of $[0, 1]$. We denote 
  \[
  I = (\mathrm{HK}) \int_0^1 f(x) \, dx.
  \]
\end{defn}

The well-definedness of this notion relies on the classical Cousin lemma, which asserts that for every gauge $\delta$ over $[0, 1]$ there exists at least one $\delta$-fine tagged partition. This elementary result of real analysis is in fact equivalent to the compactness of the interval $[0, 1]$. In particular, it ensures that the Henstock-Kurzweil integral $(\mathrm{HK}) \int_0^1 f(x) \, dx$ is uniquely determined.

\begin{example}
  Let $N \subset [0, 1]$ be a Lebesgue null set. We illustrate how the introduction of gauges makes it possible to integrate the indicator function $\ind_{N}$, which is in general not Riemann-integrable. Fix $\varepsilon > 0$. There is an open set $U$ that contains $N$ and such that $|U| \leq \varepsilon$ ($|U|$ stands for the Lebesgue measure of $U$). For any $x \in N$, we set $\delta(x) = \dist(x, \R \setminus U)$, while for $x \in [0, 1] \setminus N$, we choose $\delta(x) > 0$ arbitrarily. It is then clear that $|\scrS(f, \scrP)| \leq |U| \leq \varepsilon$ for any $\delta$-fine tagged partition $\scrP$ of $[0, 1]$.
  
  More generally, all Lebesgue-integrable functions are Henstock-Kurzweil integrable.
\end{example}

\begin{thm}
  \label{thm:fundHK}
   Let $f \colon [0, 1] \to \R$ be a differentiable function. Then $f'$ is Henstock-Kurzweil integrable and
   \[
   (\mathrm{HK}) \int_0^1 f'(x) \, dx = f(1) - f(0).
   \]
\end{thm}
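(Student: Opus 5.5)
The plan is the classical straddle-lemma argument. We build the gauge directly from the definition of differentiability at each point, then use a telescoping sum over the partition.

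Fix $\varepsilon > 0$. For each $x \in [0,1]$, differentiability of $f$ at $x$ (one-sided at the endpoints) provides $\delta(x) > 0$ such that
\[
|f(y) - f(x) - f'(x)(y - x)| \leq \varepsilon |y - x| \quad \text{for all } y \in [0,1] \text{ with } |y - x| < \delta(x).
\]
This defines a gauge $\delta$. No measurability is needed, consistent with the definition of gauge.

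Next comes the straddle step, which is the only place requiring a little care. Let $[u,v]$ be an interval with $x \in [u,v]$ and $v - u < \delta(x)$. Since $|v - x| < \delta(x)$ and $|u - x| < \delta(x)$, we can write
\[
f(v) - f(u) - f'(x)(v-u) = \bigl[f(v) - f(x) - f'(x)(v-x)\bigr] - \bigl[f(u) - f(x) - f'(x)(u-x)\bigr].
\]
Applying the bound above to each bracket gives
\[
|f(v) - f(u) - f'(x)(v-u)| \leq \varepsilon (v - x) + \varepsilon (x - u) = \varepsilon (v-u).
\]
The key point is that the tag lies inside $[u,v]$, which is exactly what a tagged partition guarantees.

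Finally, let $\scrP$ be any $\delta$-fine tagged partition. Writing $f(1) - f(0) = \sum_i \bigl(f(a_{i+1}) - f(a_i)\bigr)$ as a telescoping sum and applying the straddle estimate to each $[a_i, a_{i+1}]$ with tag $x_i$, we get
\[
|\scrS(f', \scrP) - (f(1) - f(0))| \leq \sum_i \varepsilon (a_{i+1} - a_i) = \varepsilon.
\]
The Cousin lemma guarantees that $\delta$-fine partitions exist, so the integral is well defined. Together with the estimate above, this shows that $f'$ is Henstock--Kurzweil integrable with integral $f(1) - f(0)$.

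I do not expect any real obstacle beyond the straddle step. The only points to check are the one-sided derivatives at $0$ and $1$ and the fact that the bound holds trivially when $y = x$.
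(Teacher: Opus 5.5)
Your proof is correct and is essentially the paper's argument. The paper also chooses $\delta(x)$ from differentiability so that $|f(b)-f(a)-f'(x)(b-a)|\le\varepsilon(b-a)$ for every interval $[a,b]\ni x$ with $\diam[a,b]<\delta(x)$, then telescopes over a $\delta$-fine partition; you simply write out explicitly the straddle step that the paper leaves implicit.
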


\begin{proof}
  Fix $\varepsilon > 0$. For any $x \in [0, 1]$, there is $\delta(x) > 0$ such that
  \[
  \left| f(b) - f(a) - f'(x)(b - a) \right| \leq \varepsilon (b-a)
  \]
  for any compact interval $[a, b] \subset [0, 1]$ that contains $x$ and such that $\diam [a, b] < \delta(x)$. By a telescoping argument, it follows that
  \[
  \left| \scrS(f', \scrP) - (f(1) - f(0)) \right| \leq \varepsilon
  \]
  for all $\delta$-fine partitions $\scrP$.
\end{proof}

\subsection{A topology on $HK[0, 1]$}
\label{subsec:topHK}

The notion of a charge naturally appears when one attempts to topologize the space $HK[0,1]$ of Henstock-Kurzweil integrable functions. While there is a canonical choice of topology, it is not entirely straightforward due to the non-absolute nature of the integral. Here, we shall be satisfied with equipping $HK[0,1]$ with a topology under which the topological dual $HK[0,1]^*$ admits a convenient and explicit representation.

Motivated by the classical duality between $L^1([0,1])$ and $L^\infty([0,1])$, a natural candidate for the dual of $HK[0,1]$ is the space of multipliers.
By a \emph{multiplier} for the Henstock-Kurzweil integral (over $[0, 1]$), we mean a function $g \colon [0, 1] \to \R$ such that 
\[
fg \in HK[0, 1] \text{ for all } f \in HK[0, 1].
\]
\begin{rmrk}
Multipliers can also be considered in the context of other notions of integration. 
For example, it is a straightforward exercise to characterize the multipliers of conditionally convergent series: 
a sequence $(v_n)$ has the property that the series $\sum_{n=0}^\infty u_n v_n$ converges for every convergent series $\sum_{n=0}^\infty u_n$ 
if and only if it is of bounded variation, \ie{}, $\sum_{n=0}^\infty |v_{n+1} - v_n| < \infty$.
\end{rmrk}
We now characterize the multipliers for the Henstock-Kurzweil integral.
\begin{defn}
\label{def:variation}
  The \emph{variation} of a function $g \colon [0, 1] \to \R$ is the quantity
  \[
  \mathrm{V}(g) = \sup \left\{ \sum_{i=0}^{n-1} |g(a_{i+1}) - g(a_i)| : 0 = a_0 < a_1 < \cdots < a_n = 1 \right\}.
  \]
  In case $\mathrm{V}(g) < \infty$, we say that $g$ is \emph{of bounded variation}. We say that $g$ is \emph{of essential bounded variation} whenever there is a function $\bar{g}$ such that $g = \bar{g}$ almost everywhere and $\mathrm{V}(\bar{g}) < \infty$. We denote by $BV[0, 1]$ the space of functions of essential bounded variation, equipped with the norm
  \[
  \|g\|_{BV[0, 1]} = \| g \|_\infty + \inf \left\{ \mathrm{V}(\bar{g}) : g = \bar{g} \text{ a.e} \right\}.
  \]
\end{defn}

\begin{thm}[{\cite[Theorem~6.1.9]{Yeong}}]
\label{thm:dualHK}
  The multiplier space for $HK[0, 1]$ is $BV[0, 1]$.
\end{thm}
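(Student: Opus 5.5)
The proof splits into three parts. First, multipliers are insensitive to changes on null sets. Then comes sufficiency, via integration by parts. Finally necessity, via a construction modelled on the series Remark above.

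\emph{Null sets.} I first show that any function $h$ vanishing outside a Lebesgue null set $N$ is HK integrable with integral $0$, even if $h$ is unbounded. Put $N_k = N \cap \{k-1 \le |h| < k\}$ and choose open sets $U_k \supset N_k$ with $|U_k| \le \varepsilon 2^{-k}/k$. Then set $\delta(x) = \dist(x, \R\setminus U_k)$ for $x \in N_k$, exactly as in the Example. Since the intervals of a partition are nonoverlapping, this gives $|\scrS(h,\scrP)| \le \sum_k k\,\varepsilon 2^{-k}/k \le \varepsilon$. Consequently, if $g = \bar g$ a.e., then $fg - f\bar g$ is such an $h$. Hence $g$ is a multiplier if and only if $\bar g$ is, and it suffices to work with a genuine bounded-variation representative.

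\emph{Sufficiency.} Let $\mathrm V(g) < \infty$ and $f \in HK[0,1]$. Set $F(x) = (\mathrm{HK})\int_0^x f$. I would use the Saks--Henstock lemma, which I would prove directly from Definition~\ref{def:HK} by the usual refinement-on-complementary-intervals argument. It gives a gauge $\delta$ such that $\sum_i |f(x_i)(a_{i+1}-a_i) - (F(a_{i+1})-F(a_i))| \le 4\varepsilon$ for every $\delta$-fine partition. It also implies that $F$ is continuous. Therefore
\[
\Bigl|\scrS(fg,\scrP) - \sum_i g(x_i)\bigl(F(a_{i+1})-F(a_i)\bigr)\Bigr| \le 4\varepsilon \|g\|_\infty .
\]
The second sum is a Riemann--Stieltjes sum for $\int g\,dF$. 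This integral exists because $F$ is continuous and $g$ has bounded variation; it equals $g(1)F(1) - \int F\,dg$. The sum converges as the mesh tends to $0$, so replacing $\delta$ by $\min(\delta,\eta)$ for a suitable constant $\eta$ shows that $fg \in HK[0,1]$.

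\emph{Necessity.} Suppose $g$ is a multiplier. I would argue by contradiction, building a single $f \in HK[0,1]$ with $fg \notin HK$, in the spirit of the Remark on series.

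\begin{enumerate}
\item Replace $g$ by its averages on small intervals; this handles the "essential" aspect. If no a.e.\ representative has bounded variation, then the essential variation, computed via Lebesgue points or averages $\frac{1}{|J|}\int_J g$, is infinite. This uses that $g$ is locally integrable; if it is not, a direct bump construction with $f \ge 0$ already destroys integrability.
\item Localize. By compactness and bisection, the infinite essential variation concentrates at some point $c$, say from the right. I then obtain disjoint intervals $J_n, J_n'$ shrinking to $c$, with averages $\alpha_n, \alpha_n'$ of $g$ over them, such that $\sum_n |\alpha_n - \alpha_n'| = \infty$.
\item Build $f$. Let
\[
f = \sum_n c_n \Bigl(\tfrac{\ind_{J_n}}{|J_n|} - \tfrac{\ind_{J_n'}}{|J_n'|}\Bigr),
\]
where $c_n \to 0$ is chosen with $\sum_n c_n(\alpha_n - \alpha_n')$ divergent. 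This is the sequence-multiplier fact from the Remark. Then $f$ is HK integrable by Hake's theorem (improper integrals at $c$ are HK integrals), because its partial integrals tend to $0$.
\item Conclude. $fg$ restricted to $[c+r,1]$ has integral $\sum_{J_n \subset [c+r,1]} c_n(\alpha_n-\alpha_n')$, and this has no limit as $r \downarrow 0$. By the converse direction of Hake's theorem, $fg \notin HK$.
\end{enumerate}

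The main obstacle is step 2. One must pass from "no representative has bounded variation" to a quantitative, localized sequence of average differences with divergent sum, while keeping the intervals disjoint and accumulating at a single point so that Hake's theorem applies. I would first treat the case where $g$ is essentially unbounded separately, and only then run the variation localization.
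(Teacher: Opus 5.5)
The paper gives no proof of this statement. It is quoted from \cite{Yeong}, so there is no in-paper argument to compare yours with. Judged on its own, your outline is a correct proof, and an elementary one that needs no functional-analytic input.

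\begin{itemize}
\item The null-set lemma is right. It handles exactly the difficulty that $g-\bar g$ may be unbounded on its null support.
\item The sufficiency part works as written. The Saks--Henstock bound gives $|\scrS(fg,\scrP)-\sum_i g(x_i)(F(a_{i+1})-F(a_i))|\le 4\varepsilon\|g\|_\infty$. The Riemann--Stieltjes integral $\int_0^1 g\,dF$ exists in the mesh sense, by summation by parts from $\int_0^1 F\,dg$ with $F$ continuous and $g$ of bounded variation.
\item The necessity part, via Hake's theorem, correctly reduces the problem to a statement about sequences, in the spirit of the paper's Remark on series multipliers.
\end{itemize}

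The bookkeeping should be made explicit at the following points.

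\emph{Measurability.} If $g$ is not measurable, then $f\equiv 1$ already shows $g$ is not a multiplier, since HK integrable functions are measurable.

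\emph{Essential boundedness.} This reduction is a one-liner and should come before the localization. For $h\in L^1([0,1])$, put $f=|h|\operatorname{sgn}g\in L^1\subset HK[0,1]$. Then $fg=|hg|\ge 0$ is HK integrable, hence Lebesgue integrable. So $g$ multiplies $L^1$ into $L^1$, which forces $g\in L^\infty$. Essential boundedness is genuinely used in your step 2: it bounds by $2\|g\|_\infty$ the single crossing term created when a chain of Lebesgue points is split at a bisection point.

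\emph{Your pairs are unnecessary.} Build the chain blockwise in intervals $(c+r_{k+1},c+r_k)$, each block carrying variation at least $1$. Replace point values by averages over tiny intervals. This gives ordered intervals $J_n\downarrow c$ whose averages satisfy $\sum_n|\alpha_{n+1}-\alpha_n|=\infty$. The Remark then literally supplies a convergent $\sum u_n$ with $\sum u_n\alpha_n$ divergent. The function $f=\sum_n u_n\ind_{J_n}/|J_n|$ then works exactly as in your steps 3--4, since $u_n\to 0$. If you keep the pairs, you need two extra things:
\begin{itemize}
\item an even/odd splitting of the chain, to make the pairs disjoint;
\item the $c_0$--$\ell^1$ fact rather than the Remark itself, realised by the Abel--Dini choice $c_n=\operatorname{sgn}(d_n)/\sum_{k\le n}|d_k|$ with $d_n=\alpha_n-\alpha_n'$.
\end{itemize}

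\emph{The identity in step 4.} It is exact only when $c+r$ falls in a gap between bumps. Otherwise there is an error of at most $2\|g\|_\infty$ times the current coefficient. That error tends to $0$, so your divergence conclusion is unaffected.
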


Theorem~\ref{thm:dualHK} tells us that the natural domains of integration for the
Henstock-Kurzweil integral are those sets $B \subset [0,1]$ whose indicator function
belongs to $BV[0,1]$, that is, finite unions of compact intervals, up to Lebesgue null sets.

By associating to each function $f \in HK[0, 1]$ its indefinite integral 
\[
F \colon x \in [0, 1] \mapsto (\mathrm{HK}) \int_0^x f,
\] we can naturally view $HK[0, 1]$ as a subspace of $C_0[0, 1]$, the space of continuous functions vanishing at $0$, with the supremum norm. The continuity of $F$ is indeed proven in \cite[Theorem~9.12]{Gordon1994}. The \emph{Alexiewicz norm} of $f$ is defined by
\[
\|f\|_{HK[0, 1]} = \max \left\{ \left| (\mathrm{HK}) \int_0^x f \right| : x \in [0, 1] \right\}.
\]
At first glance, this choice of norm may seem counterintuitive---we are, in a sense, identifying a function with its indefinite integral. Yet it leads to the following duality result.

\begin{thm}[Alexiewicz, \cite{Alex}]
   The continuous linear maps on $HK[0, 1]$ are of the form
   \[
   f \mapsto (\mathrm{HK}) \int_0^1 fg,
   \]
   where $g \in BV[0, 1]$. This results in an isomorphism $HK[0, 1]^* \simeq BV[0, 1]$.
\end{thm}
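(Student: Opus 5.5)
The plan is to realize $HK[0,1]$, via $f \mapsto F$, as a dense subspace of $C_0[0,1]$ with the supremum norm. The Alexiewicz norm is then exactly the restriction of $\|\cdot\|_\infty$, so $HK[0,1]^*$ is identified with $C_0[0,1]^*$. Riesz's representation theorem then describes this dual by measures, and integration by parts converts measures into $BV$ multipliers.

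\textbf{Step 1: density.} The map $f\mapsto F$ is injective modulo a.e.\ equality and is an isometry by definition of the Alexiewicz norm. Every $C^1$ function $F$ with $F(0)=0$ is the indefinite integral of $F'$ by Theorem~\ref{thm:fundHK}. Since $C^1$ functions vanishing at $0$ are dense in $C_0[0,1]$, the image is dense. Hence each continuous functional $\Lambda$ on $HK[0,1]$ extends uniquely to a functional on $C_0[0,1]$.

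\textbf{Step 2: Riesz and integration by parts.} By the Riesz representation theorem, there is a finite signed Borel measure $\mu$ on $(0,1]$ with $\Lambda(f)=\int F\,d\mu$. Set $g(x)=\mu((x,1])$ for $x \in [0,1)$ and $g(1)=0$; it is of bounded variation with $\mathrm{V}(g)\le|\mu|$.

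The key identity to prove is the integration by parts formula
\[
(\mathrm{HK})\int_0^1 fg=F(1)g(1)-\int_{[0,1]}F\,dg.
\]
I will show it for $g$ of bounded variation and $f\in HK$. This also proves one half of Theorem~\ref{thm:dualHK}, namely that $fg$ is integrable. The approach is to first treat step functions $g$, where the identity follows from additivity of the integral over subintervals. For general $g$, I will use a Riemann–Stieltjes/gauge argument: form $\delta$-fine sums of $f(x_i)g(x_i)|I_i|$, and replace $f(x_i)|I_i|$ by $F(I_i)$ using the Saks–Henstock lemma. The resulting sum $\sum g(x_i)F(I_i)$ is then treated by Abel summation, with the error controlled by $\mathrm{V}(g)\,\varepsilon$. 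With $dg=-\mu$, the right side equals $\int F\,d\mu=\Lambda(f)$, as desired.

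\textbf{Step 3: converse and isomorphism.} Conversely, for $g\in BV[0,1]$, the same formula gives the bound
\[
\Bigl|\int fg\Bigr|\le(|g(1)|+\mathrm{V}(g))\|f\|_{HK}.
\]
So $f\mapsto\int fg$ is continuous. Changing $g$ on a null set does not change $\int fg$, since multiplying $f$ by $\ind_N$ with $|N|=0$ gives an integrable function of integral $0$ (the Example). So the map $BV[0,1]\to HK[0,1]^*$ is well defined.

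For injectivity, suppose $\int fg=0$ for all $f$. Testing against $f=\ind_{[a,b]}$ gives that $g$ vanishes a.e. For the norm equivalence, the bound above gives one inequality. For the reverse, choose the normalized representative of $g$ produced from $\mu$. Its norm is controlled by $|\mu|=\|\Lambda\|$ via the Riesz isometry, so the open mapping theorem is not even needed.

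The main obstacle is the integration by parts formula of Step 2 for merely BV (possibly discontinuous) $g$. This requires the Saks–Henstock lemma and careful treatment of jumps of $g$ at tags. I would first try to reduce to right-continuous $g$, normalized as in Step 2, and handle the countably many jumps with a gauge that forces tags at jump points to sit at interval endpoints.
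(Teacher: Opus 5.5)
Your proposal is correct. The paper gives no proof of its own and only cites Alexiewicz, but it sets up exactly your Step~1 (identifying $HK[0,1]$, via $f\mapsto F$, with a dense subspace of $C_0[0,1]$), so your continuation by Riesz representation and integration by parts is the classical argument that this framing points to.

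One simplification: the ``main obstacle'' you flag is not an obstacle. Because $F$ is continuous, the Riemann--Stieltjes integral $\int_0^1 F\,dg$ exists as a mesh-limit for every $g$ of bounded variation, jumps included. Your Abel-summation argument therefore goes through directly, with no special treatment of jumps, once the gauge is also bounded by a constant $\rho$. Choose $\rho$ so that Riemann--Stieltjes sums of mesh below $2\rho$, which bounds the mesh of the partition formed by the tags, are within $\varepsilon$ of the integral.
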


One drawback of the Alexiewicz norm is that the space $HK[0, 1]$ is not complete, it is only isometric to a dense subspace of $C_0[0, 1]$. It is shown however in \cite{Osta} that the Alexiewicz topology is the strongest locally convex topology compatible with the duality between $HK[0, 1]$ and $BV[0, 1]$, and the only normable one. We also note the work \cite{Kurz}, where an alternative vector space topology is constructed for $HK[0, 1]$, that is complete but not locally convex.

Let us mention in passing that another motivation for introducing a topology on
$HK[0,1]$ is to investigate its relationship with convergence theorems.
Convergence results for the Henstock--Kurzweil integral do exist; for instance,
it is straightforward to show that
\[
\lim_{n\to\infty} (\mathrm{HK}) \int_0^1 f_n
=
(\mathrm{HK}) \int_0^1 \lim_{n\to\infty} f_n
\]
whenever $(f_n)$ is a sequence of equi-integrable functions in $HK[0, 1]$ converging
pointwise. Here, equi-integrability means that, given $\varepsilon>0$, a single
gauge can be chosen in the definition of Henstock--Kurzweil integrability
uniformly with respect to $n$.

However, such convergence theorems lack the practicality of the Lebesgue
dominated convergence theorem, and are not proven to be ``optimal''. In the author's opinion, they remain
poorly understood. This difficulty is closely related to the fact that it is
hard to formulate useful (weak) compactness criteria in a non-complete space such as
$HK[0,1]$ (see \cite{BongiornoPanchapagesan1995} for results in this direction). For example, the following question is open.

\begin{quest}
  Find a convergence theorem for the Henstock-Kurzweil integral that rigorously justifies the following formal proof of the fundamental theorem of calculus. Let $f \colon \R \to \R$ be a differentiable function. One has
  \begin{align*}
  (\mathrm{HK}) \int_0^1 f'(x) \, dx & = (\mathrm{HK}) \int_0^1 \lim_{n \to \infty} n \left(f\left(x + \frac{1}{n}\right) - f(x) \right) dx \\
                                    & = \lim_{n \to \infty} (\mathrm{HK}) \int_0^1 n \left(f\left(x + \frac{1}{n}\right) - f(x) \right) dx \\
                                   &  = \lim_{n \to \infty} \left( \frac{1}{n} \int_{1}^{1 +1/n} f(x) \,  dx - \frac{1}{n} \int_0^{1/n} f(x) \, dx \right) \\
                                    & = f(1) - f(0).
  \end{align*}
\end{quest}

\subsection{Henstock integral in higher dimension}
\label{subsec:Henstock}

When attempting to define a gauge integral of Henstock-Kurzweil type in dimension $d$, one major obstacle is to identify the appropriate generalization of intervals. The most straightforward gauge integral for functions on $[0, 1]^d$ is the Henstock integral, that closely mirrors the Henstock-Kurzweil integral in dimension one, with intervals replaced by non-overlapping tagged rectangles.

\begin{defn}
\label{def:Hintegral} A \emph{rectangle} is a set of the form $\prod_{i=1}^d [a_i, b_i]$, where $a_i \leq b_i$ for each $i \in \{1, \dots, d\}$.
  Two rectangles $I$ and $I'$ are \emph{non-overlapping} whenever $|I \cap I'| = 0$. A \emph{tagged partition} (into rectangles) of $[0, 1]^d$ is a finite set of the form 
  $\scrP = \{(I_i, x_i) : i \in \{1, \dots, n \}\}$ where the $I_1, \dots, I_n$ are pairwise non-overlapping rectangles such that $[0, 1]^d = \bigcup_{i=1}^n I_i$ and $x_i \in I_i$ for each $i$. Given a function $f \colon [0, 1]^d \to \R$, its \emph{Riemann sum} over $[0, 1]^d$ is
  \[
\scrS(f, \scrP ) = \sum_{i=1}^n f(x_i) |I_i|. 
  \]
   A \emph{gauge} is a positive function $\delta \colon [0, 1]^d \to \R^{> 0}$. The tagged partition $\scrP$ is said to be \emph{$\delta$-fine} whenever $\diam I_i < \delta(x_i)$ for all $i$.
   
   A function $f \colon [0, 1]^d \to \R$ is said to be \emph{Henstock integrable} (with integral $I$) whenever for all $\varepsilon > 0$, there exists a gauge $\delta$ such that
  \[
  \left| \scrS(f, \scrP) - I \right| \leq \varepsilon
  \]
  for all $\delta$-fine tagged partitions $\scrP$ of $[0, 1]^d$.
\end{defn}

A natural question is whether one can formulate a divergence theorem for the Henstock integral that is valid for every differentiable vector field. Such a result would constitute a higher-dimensional analog of the fundamental theorem of calculus. Unfortunately, the answer is negative. This non-absolute integral does, however, satisfy a Fubini theorem (see \cite[Section~6.1]{McLeod1980}, and a remarkable argument due to W. Pfeffer shows that a gauge integral cannot simultaneously satisfy both a generalized Fubini theorem and divergence theorem, see \cite[Example~11.1.2]{Pfef}.

We now describe the multipliers associated with the Henstock integral. They form a space of functions of bounded variation, though not in the modern sense of the term. To make this precise, we introduce some terminology.

Given a function $f \colon [0, 1]^d \to \R$ and a rectangle $I = \prod_{i=1}^d [a_i, b_i] \subset [0, 1]^d$, the \emph{rectangular increment} of $f$ over $I$ is the quantity
\begin{equation}
\label{eq:rectIncr}
\Delta_f (I) = \sum_{(c_i) \in \prod_{i=1}^d \{a_i, b_i\}} (-1)^{\delta_{a_1, c_1}} \cdots (-1)^{\delta_{a_d, c_d}} f(c_1, \dots, c_d).
\end{equation}
In particular, in dimension~2, one recovers the perhaps better-known rectangular increment $\Delta_f(I) = f(b_1, b_2) - f(a_1, b_2) - f(b_1, a_2) + f(a_1, b_1)$. It is worth noting that, if $f$ is of class $C^d$, then $\Delta_f(I)$ is just
\[
\Delta_f(I) = \int_{I} \frac{\partial^d f}{\partial x_1 \cdots \partial x_d} (x) \, dx.
\]
The \emph{Vitali variation} of a function $f$ is defined as
\begin{equation}
\label{eq:vitali}
\mathrm{V}(f) = \sup \sum_{I \in \scrP} |\Delta_f(I)|
\end{equation}
where the supremum is taken over all (untagged) partitions $\scrP$ of $[0, 1]^d$ into pairwise non-overlapping rectangles.

If \( f = \ind_B \) is an indicator function, then the Vitali variation does not, in general, coincide with the perimeter of \( B \), contrary to what one might expect. For example, let \( B \subset [0,1]^2 \) be the ``rotated'' square with vertices \( (1/2,0), (1,1/2), (1/2,1), (0,1/2) \). Then the Vitali variation of \( \mathbf{1}_B \) is infinite. By contrast, if \( B \) is a rectangle (with sides parallel to the coordinate axes, as in Definition~\ref{def:Hintegral}) the Vitali variation equals \(4\), the number of its vertices!

\begin{thm}[{\cite[Theorem~5.1]{LeeChewLee1996}}]
  The multiplier space for the $d$-dimensional Henstock integral consists exactly of those functions $[0, 1]^d \to \R$ that coincide almost everywhere with functions $g$ such that, for all $J \subsetneq \{1, \dots, d\}$ and for all $(x_i)_{i \in J}$, the partial map
  \[
  (x_i)_{i \not\in J} \mapsto g(x_1, \dots, x_d)
  \]
  defined over $[0, 1]^{d - \# J}$, has finite Vitali variation.
\end{thm}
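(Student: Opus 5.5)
We prove the two inclusions separately. This mirrors the one-dimensional case (Theorem~\ref{thm:dualHK}), with intervals replaced by rectangles and ordinary variation replaced by the hierarchy of Vitali variations of all sections. Call $g$ \emph{of bounded Hardy--Krause type} if $g$ and all its partial maps (fixing the coordinates in $J \subsetneq \{1,\dots,d\}$) have finite Vitali variation. Such a $g$ is bounded. Moreover, it is the distribution function of a finite signed Borel measure plus lower-dimensional terms: after normalizing $g$ a.e. (say, right-continuous in each variable), we have
\[
g(x) = \sum_{J \subset \{1,\dots,d\}} \mu_J\Bigl(\prod_{i \notin J} [x_i, 1] \times \{1\}^J\Bigr),
\]
where $\mu_J$ is a finite signed measure on the face where the coordinates in $J$ equal $1$.

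\textbf{Sufficiency.} Given $f$ Henstock integrable, set $F(x) = (\mathrm{H})\int_{[0,x]} f$, the indefinite integral over subrectangles. The plan is to prove a multidimensional integration by parts formula
\[
(\mathrm{H})\int_{[0,1]^d} fg = \sum_{J} \pm \int F(\cdot)\, d\mu_J ,
\]
where each term on the right is a Lebesgue--Stieltjes integral of the continuous function $F$, restricted to the relevant face, against $\mu_J$. We would proceed in the following order:
\begin{enumerate}
\item Using the Saks--Henstock lemma for the Henstock integral, show that $F$ is continuous, and that the restrictions of $F$ to faces are also indefinite Henstock integrals.
\item Choose a gauge $\delta$ for $f$. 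For a $\delta$-fine partition $\scrP$, write
\[
\scrS(fg,\scrP) = \sum_i g(x_i) f(x_i)|I_i| \approx \sum_i g(x_i)\,\Delta_F(I_i).
\]
By Saks--Henstock, the error is controlled by $\|g\|_\infty\,\varepsilon$.
\item Apply a discrete Abel summation in $d$ variables to $\sum_i g(x_i)\Delta_F(I_i)$. Refining $\scrP$ to a grid is harmless for the $\Delta_F$ sums, since $\Delta_F$ is additive. The result is a combination of Riemann--Stieltjes sums of $F$ against the increments $\Delta_g$ on the faces, whose total variation is bounded by the Vitali variations in the hypothesis.
\item Use the uniform continuity of $F$ to conclude that these Stieltjes sums converge to $\int F\,d\mu_J$.
\end{enumerate}
A delicate point is that the tags $x_i$ are not grid points. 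We would handle it by replacing $g(x_i)$ with the value of $g$ at a corner of $I_i$. The cost of this replacement is bounded by $\sum_i \operatorname{osc}_{I_i} g \cdot |\Delta_F(I_i)|$. Because $F$ is uniformly continuous and $g$ has bounded variation, this term is small. It should be estimated as in the one-dimensional proof of \cite{Yeong}, with the Vitali variation replacing the ordinary variation.

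\textbf{Necessity.} Suppose $g$ is a multiplier. We argue in three steps.
\begin{enumerate}
\item The map $f \mapsto (\mathrm{H})\int fg$ is linear on $H([0,1]^d)$. Equip $H([0,1]^d)$ with the multidimensional Alexiewicz norm $\sup_x |F(x)|$. A closed graph or Banach--Steinhaus argument shows that this functional is bounded. For this we apply the argument to the completion, which is a subspace of $C([0,1]^d)$, or we argue directly by a gliding hump construction. Hence $|\int fg| \le C \sup |F|$.
\item By the Riesz representation theorem, the functional extends to a measure $\nu$ acting on $F$. Testing against $f$ approximating derivatives of smooth compactly supported functions then identifies $g$ a.e. as the distribution function of $\nu$ and of its face components.
\item Finite total variation of $\nu$ and of its face measures yields finite Vitali variation of $g$ and of all its sections, for the a.e. representative.
\end{enumerate}
The test functions needed to bound $\mathrm{V}(g)$ directly are of the form $f = \sum_I \pm \partial_1\cdots\partial_d \varphi_I$, where $\varphi_I$ are bumps adapted to a partition and the signs are chosen to match $\Delta_g(I)$. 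The Alexiewicz norm of such an $f$ stays bounded, while $\int fg \approx \sum_I |\Delta_g(I)|$.

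\textbf{Main obstacle.} We expect the hardest step to be the boundedness in the necessity direction. The space $H([0,1]^d)$ is not complete, so we cannot invoke closed graph directly. We would first try a gluing argument: assume $\mathrm{V}(g)=\infty$, or a section has infinite Vitali variation. Then build disjointly supported pieces $f_k$ on shrinking rectangles near an accumulation point, with $\sup|F_k| \le 2^{-k}$ and $\int f_k g \ge 1$. Finally, verify, using a gauge tailored near the accumulation point, that $\sum f_k$ is Henstock integrable while $\sum f_k g$ is not.
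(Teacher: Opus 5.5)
The paper gives no proof of this statement; it only cites \cite[Theorem~5.1]{LeeChewLee1996}. Your proposal therefore has to stand on its own, and its sufficiency half has a genuine gap. You justify the step you call delicate by appeal to the one-dimensional argument. That argument rests on $\sum_i \operatorname{osc}_{I_i} g \le \mathrm{V}(g)$ for non-overlapping intervals, and this has no analogue for rectangles when $d \ge 2$, neither for the Vitali variation nor for the full hierarchy of section variations. Take $d=2$ and $g = \ind_{\{x_1 \ge 1/2\}}$. Then $\mathrm{V}(g)=0$ and every section has variation at most $1$. Yet the $M$ rectangles $[\frac12-\frac\eta2,\frac12+\frac\eta2]\times[\frac jM,\frac{j+1}M]$, all of diameter about $\eta$, give $\sum_i \operatorname{osc}_{I_i} g = M$. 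Uniform continuity of $F$ only yields $|\Delta_F(I_i)| \le 2^d\omega_F(\eta)$ for each $i$. You cannot fall back on $\sum_i |\Delta_F(I_i)|$ either, because $f$ is not absolutely integrable. For $f(x)=h(x_2)$ with $h \in HK[0,1]\setminus L^1([0,1])$, one has $\Delta_F([a,b]\times[c,d]) = (b-a)(G(d)-G(c))$, where $G = (\mathrm{HK})\int_0^{\cdot} h$ has unbounded variation. A non-uniform subdivision of the stack therefore makes this sum arbitrarily large at fixed mesh. So the cost of replacing $g(x_i)$ by a corner value is not controlled by mesh and continuity of $F$. It can only be made small through the choice of gauge, using information on $f$ at the tags.

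Step~3 has the same defect in another guise. After refining $\scrP$ to a grid, a cell $Q\subset I_i$ carries $g(c_i)$, with $c_i$ a corner of $I_i$ rather than of $Q$. Abel summation therefore gives a Stieltjes sum against the mixed differences of the step function $\sum_i g(c_i)\ind_{I_i}$, not against increments $\Delta_g$ of grid rectangles. At T-junctions these mixed differences are one-dimensional differences of $g$ along varying lines, and their total is not bounded by the variations of $g$. With $g$ as above, use strips of height $1/N$; in even strips one rectangle spans $[\frac12-\frac\eta2,\frac12+\frac\eta2]$ in $x_1$, in odd strips one spans $[\frac12,\frac12+\eta]$. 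With lower-left corners, the step function then alternates between $0$ and $1$ on $[\frac12,\frac12+\frac\eta2]\times[0,1]$ and has Vitali variation at least $N-1$. Using the corners of the cells instead costs $\sum_Q (g(c_{i(Q)})-g(c_Q))\Delta_F(Q)$ over untagged cells, where Saks--Henstock gives nothing.

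What is missing is an argument that is local in the tagged rectangles and uses the gauge. For instance, you could add the following three steps.
\begin{itemize}
\item Also impose $\delta(x)\le \varepsilon/(1+|f(x)|)$, so that $|f(x_i)|\,|I_i|\le \varepsilon|I_i|/\diam I_i$.
\item Bound $\sum_i \operatorname{osc}_{I_i} g\,|I_i|/\diam I_i$ by a dimensional constant times the variations in the hypothesis, via slicing: rectangles whose projections on $\{x_k=0\}$ share an interior point have non-overlapping $x_k$-projections.
\item Instead of summing by parts globally, compare each $g(x_i)\Delta_F(I_i)$ with a candidate additive primitive $\Phi(I_i)$ given by integration by parts on $I_i$.
\end{itemize}

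On the necessity side, as you note, closed graph and Banach--Steinhaus are unavailable, so boundedness rests entirely on the gliding hump, which is only outlined. You still must:
\begin{itemize}
\item localize the unboundedness at a point, using compactness and smooth cut-offs (which are multipliers by sufficiency);
\item prove that $\sum_k f_k$ is Henstock integrable, via a Harnack-type extension;
\item place the pieces so that rectangles separate the first $K$ from the rest.
\end{itemize}
After that, your Riesz-representation step is sound.
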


The preceding discussion shows that the multiplier space is not invariant under rotations; as a consequence, the space \( H([0,1]^d) \) of Henstock integrable functions also fails to be rotation invariant as well. More generally, the entire approach based on rectangles makes it difficult to derive a practical change-of-variables formula for the Henstock integral.

As in the one-dimensional case, it is possible to realize the space of multipliers as the dual of $H([0, 1]^d)$, provided that we equip it with the $d$-dimensional Alexiewicz norm:
\[
\|f\|_{H([0, 1]^d)} = \max \left\{ \left| (\mathrm{H}) \int_{[0, x_1] \times \cdots \times [0, x_d]} f\right| : x = (x_1, \dots, x_d) \in [0, 1]^d \right\}.
\]
Then $H([0, 1])^d$ can be isometrically identified with a strict dense subspace of $C_0([0, 1]^d)$, the  space of continuous functions that vanish on the coordinate hyperplanes $\bigcup_{i=1}^d \{x \in [0, 1]^d : x_i = 0\}$, with the supremum norm. In particular, it is not complete. 

\subsection{Charges}
\label{subsec:Ch}

We aim to define a non-absolute integral in dimension 
$d$ whose multiplier space consists of functions of bounded variation, in the modern sense of the term. To achieve this, we must abandon the rectangle-based approach and revise the notion of an indefinite integral. Whereas, for the Henstock integral, the appropriate notion of an indefinite integral for $f \in H([0, 1]^d)$ was the map
\begin{equation}
\label{eq:primitiveRect}
x \mapsto (\mathrm{H}) \int_{[0, x_1] \times \cdots \times [0, x_d]} f
\end{equation}
defined by integration over rectangles, we shall instead consider indefinite integrals as set functions. These assign to each suitable domain $B$ the value $\int_B f$. For the Pfeffer integral, the domains of integration are bounded sets of finite perimeter. We therefore begin by recalling the definitions of functions of bounded variation and of sets of finite perimeter.

\begin{defn}
\label{def:deGiorgiVar}
  Let $u \in L^1_{\mathrm{loc}}(\R^d)$ be a map. Its \emph{variation} (in the sense of De Giorgi) is the quantity
  \[
  \|Du\|(\R^d) = \sup \left\{ \int_{\R^d} u \diver v : v \in C^1_c(\R^d; \R^d) \text{ and } \|v\|_\infty \leq 1 \right\}.
  \]
  In case $\|Du\|(\R^d) < \infty$, then we say that $u$ is \emph{of bounded variation}. By the Riesz representation theorem, it follows that the distributional gradient of $u$ is finite $\R^d$-measure $Du$. Its total variation measure is denoted by $\|Du\|$. A measurable set $B$ is said to be \emph{of finite perimeter} whenever $\ind_B$ is of finite variation. The \emph{perimeter} of $B$ is the quantity $\|B\| = \|D \ind_B \|(\R^d)$.
\end{defn}

In the sequel, we shall define the Pfeffer integral on the domain $[0,1]^d$. This choice of domain is not essential---in contrast to Subsection~\ref{subsec:Henstock}---and is made solely for the sake of simplicity and notational convenience (and because some arguments in the later sections will work better over the cube $[0, 1]^d$). We say that a measurable set $B \subset \R^d$ is a \emph{$BV$-set} if it is bounded and of finite perimeter. The collection of $BV$-subsets of $[0, 1]^d$ will be denoted $\niceBV([0, 1]^d)$.

We let $BV([0, 1]^d)$ be the set of maps $u \in L^1(\R^d)$ such that $u = 0$ a.e on $\R^d \setminus [0, 1]^d$ and $\|Du\|(\R^d) < \infty$, normed by
\[
\|u\|_{BV} = \|Du\|(\R^d).
\]
It would be also possible to norm $BV([0, 1]^d)$ with the equivalent norm $\|u\|_{L^1} + \|Du\|(\R^d)$ but the former choice is more appropriate as it makes $BV([0, 1]^d)$ isometric to a dual space, see Theorem~\ref{thm:dualCH}.
From dimension $d \geq 2$, functions of bounded variation are not necessarily bounded. We also consider the algebra $BV^\infty([0, 1]^d) = BV([0, 1]^d) \cap L^\infty(\R^d)$ normed by
\[
\|u\|_{BV^\infty} = \|u\|_\infty + \|Du\|(\R^d).
\]

\begin{defn}
\label{def:charge}
  A \emph{charge} over $[0, 1]^d$ is a set function $\omega \colon \niceBV([0, 1]^d) \to \R$ satisfying the following properties:
  \begin{itemize}
  \item \emph{additivity}: $\omega(B_1 \cup B_2) = \omega(B_1) + \omega(B_2)$ if $B_1, B_2 \in \niceBV([0, 1]^d)$ are disjoint;
  \item \emph{continuity}: $\omega(B_n) \to \omega(B)$ for any sequence $(B_n)$ in $\niceBV([0, 1]^d)$ such that $|B_n \ominus B| \to 0$ and $\sup_n \|B_n\| < \infty$ (here, $\ominus$ denotes symmetric difference).
  \end{itemize}
\end{defn}
The first appearance of charges in the study of non-absolute integrals can be traced back to Ma\v{r}\'ik \cite{Marik1965Extensions}.
The condition $\sup_n \|B_n\| < \infty$ in the continuity statement ensures that the $B_n$ do not fractalize excessively. It is inspired by continuity properties of the Henstock-Kurzweil integral (see for instance Hake's theorem \cite[Theorem~6.1.6]{Pfef}). Note also that it implies that $\omega(B) = 0$ whenever $|B| = 0$.

\begin{example}[Case $d = 1$]
\label{ex:chargedim1}
  A $BV$-set in $[0, 1]$ is, up to a Lebesgue-null set, a finite union of compact intervals. The function $x \in [0, 1] \mapsto \omega([0, x])$ determines the charge $\omega$ by additivity. From the continuity property of charges, it belongs to $C_0[0, 1]$. Reciprocally, any function $v \in C_0[0, 1]$ determines the charge
  \[
  dv \colon \bigcup_{i=1}^n [a_i, b_i] \mapsto \sum_{i=1}^n (v(b_i) - v(a_i)).
  \]
\end{example}

We can thus identify the space of charges on $[0,1]$ with $C_0[0,1]$, the completion of $HK[0,1]$ under the Alexiewicz norm. More generally, the space of charges is designed to provide a natural home for indefinite integrals arising from non-absolute integrals, such as the Pfeffer integral, which will play a central role in its construction.

\begin{example}[Indefinite integral of $L^1$ functions]
\label{ex:chargeL1}
  Let $f \in L^1([0, 1]^d)$. Then the map $T_f \colon B \mapsto \int_B f$ is a charge. The additivity of $T_f$ is clear. As for continuity $T_f(B_n) \to T_f(B)$ as soon as $|B_n \ominus B| \to 0$.
\end{example}

For the next example, we need to know a little more about the structure of sets of finite perimeter. Each such set $B$ admits a so-called \emph{reduced boundary} $\partial^* B$, defined in a measure-theoretic way, that is $(d-1)$-rectifiable, and a normal outer vector field $n_B$ defined $\scrH^{d-1}$-almost everywhere on $\partial^*B$, such that
\[
\int_B \diver v = \int_{\partial^* B} v \cdot n_B \, d\scrH^{d-1}
\]
for all $v \in C^1_c(\R^d; \R^d)$, see \cite[Theorem~5.15]{EvanGari}.

\begin{example}[Strong charge]
  Let $v \in C([0, 1]^d; \R^d)$. The \emph{flux} of $v$ is
  \[
  \diver v \colon B \in \niceBV([0, 1]^d) \mapsto \int_{\partial^* B} v \cdot n_B \, d\scrH^{d-1}
  \]
 The additivity of $\diver v$ is clear if $v \in C_c^1([0, 1]^d; \R^d)$ and can be obtained in the general case by an approximation argument. As for continuity, let $(B_n)$ be a sequence in $\niceBV([0, 1]^d)$ such that $|B_n| \to 0$ and $\sup \|B_n\| < \infty$. Fix $\varepsilon > 0$ and $w \in C^1([0, 1]^d; \R^d)$ such that $\|v - w\|_\infty \leq \varepsilon$. Then
  \[
  |\diver v(B_n)| \leq \int_{\partial^* B_n} |v - w| \, d\scrH^{d-1} + \left|\int_{B_n} \diver w \right|.
  \]
  The first term is bounded by $\varepsilon \sup_n \|B_n\|$, and the second one tends to $0$ because $\diver w \in L^1([0, 1]^d)$. By the arbitrariness of $\varepsilon$, this shows that $\diver v(B_n) \to 0$. For historical reasons, charges of the form $\diver v$ are called \emph{strong}.
\end{example}

In fact, those two examples exhaust all possibilities. It can be proven that any charge can be decomposed as a sum $T_f + \diver v$. For functional-analytical applications, it might be more appropriate to see charges as linear functionals on $BV^\infty([0, 1]^d)$, continuous with respect to some weak* topology. It is possible by the following result, that gives in particular a Banach space structure to the space of charges.

\begin{thm}[{\cite{BuczDePaPfef}}]
  For any charge $\omega$, there is a unique $\bar{\omega} \in BV^\infty([0, 1]^d)^*$ such that
  \begin{equation}
  \label{eq:extensionBVinfty}
  \bar{\omega}(u_n) \to 0 \text{ if } \|u_n\|_{L^1} \to 0 \text{ and } \sup_n \|Du_n\|(\R^d) < \infty
  \end{equation}
  and $\omega(B) = \bar{\omega}(\ind_B)$ for all $B \in \niceBV([0, 1]^d)$. It is given by
  \[
  \bar{\omega}(u) = \int_0^\infty \omega(\{u > t \}) \, dt
  \]
  for all nonnegative $u \in BV^\infty([0, 1]^d)$ (we recall that $\{u > t\} \in \niceBV([0, 1]^d)$ for almost every $t > 0$, as a consequence of the coarea formula \cite[Section~5.5]{EvanGari}), and $\bar{\omega}(u) = \bar{\omega}(u^+) - \bar{\omega}(u^-)$ in the general case.
  
  Conversely, any functional $\bar{\omega} \in BV^\infty([0, 1]^d)^*$ that satisfies the continuity condition~\eqref{eq:extensionBVinfty} defines a charge $\omega \colon B \mapsto \bar{\omega}(\ind_B)$.
\end{thm}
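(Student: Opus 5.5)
Define $\bar{\omega}$ by the layer-cake formula $\bar{\omega}(u) = \int_0^\infty \omega(\{u > t\})\,dt$ for $u \geq 0$, then check that it is linear, bounded, satisfies \eqref{eq:extensionBVinfty}, and is the only such extension.

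\emph{Well-definedness and a bound.} First we need a uniform bound $|\omega(B)| \leq C(\|B\| + 1)$, or better $|\omega(B)| \leq C\|B\|$, for $B \in \niceBV([0,1]^d)$. We argue by contradiction. If $|\omega(B_n)| > n\|B_n\|$, rescale: the sets $B_n$ cannot simply be scaled, so we use additivity and continuity instead. Since $|B| \le c\|B\|^{d/(d-1)}$ by the isoperimetric inequality, we may assume $\|B_n\|$ is bounded or small. We then produce a sequence with bounded perimeters and $|B_n| \to 0$ along which $\omega$ does not tend to $0$, contradicting continuity.

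A cleaner route is a Baire/uniform boundedness argument. On $\{B : \|B\| \le 1\}$ with the $L^1$-metric, a compact set by Rellich, $\omega$ is continuous, hence bounded. This gives $|\omega(B)| \le C$ whenever $\|B\| \le 1$. For larger perimeter, split $B$ into pieces by a grid of small cubes, using that $\|B \cap Q\|$ is controlled by the part of $\|B\|$ inside $Q$ plus the measure of $B$ on $\partial Q$ (slicing, averaged over translates of the grid). This yields $|\omega(B)| \le C(\|B\| + 1)$.

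Then $t \mapsto \omega(\{u>t\})$ is measurable. It is defined for a.e.\ $t$, and it is continuous in $t$ outside a countable set by the continuity of $\omega$ together with monotonicity of level sets. It is supported in $[0, \|u\|_\infty]$. The coarea formula gives
\[
\int_0^\infty |\omega(\{u>t\})|\,dt \le C(\|Du\|(\R^d) + \|u\|_\infty).
\]
So $\bar{\omega}$ is a bounded functional in the $BV^\infty$ norm.

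\emph{Linearity.} Positive homogeneity is immediate. Additivity $\bar{\omega}(u+v) = \bar{\omega}(u)+\bar{\omega}(v)$ is the main obstacle: for a nonlinear layer-cake expression it is not obvious. The plan is to first treat step functions $u = \sum_i t_i \ind_{B_i}$ taking finitely many values, where $\bar{\omega}(u) = \sum_i t_i\,\omega(B_i)$ follows from additivity of $\omega$. This makes $\bar{\omega}$ linear on the span of such functions. Next, approximate a general $u \ge 0$ in $BV^\infty$ by the discretizations $u_k = \frac1k\sum_{j \geq 1} \ind_{\{u > j/k\}}$ (suitably shifted levels). These satisfy $\|u_k - u\|_{L^1} \to 0$ and, by coarea, $\|Du_k\|(\R^d) \le \|Du\|(\R^d)$, while $\bar{\omega}(u_k) \to \bar{\omega}(u)$ as Riemann sums of the integral. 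It therefore suffices to prove the sequential continuity \eqref{eq:extensionBVinfty} on step functions, with the uniform bound above, and to pass to the limit in $\bar{\omega}(u_k + v_k) = \bar{\omega}(u_k)+\bar{\omega}(v_k)$.

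\emph{Continuity \eqref{eq:extensionBVinfty}.} Take $u_n \ge 0$ with $\|u_n\|_{L^1} \to 0$ and $\sup_n \|Du_n\|(\R^d) < \infty$. Fix $\varepsilon > 0$. Levels $t$ at which $\|\{u_n>t\}\|$ is large contribute little, by coarea combined with the bound $C(\|B\|+1)$. On the remaining levels, $|\{u_n>t\}| \le \|u_n\|_1/t \to 0$, and continuity of $\omega$ gives pointwise convergence to $0$. Dominated convergence in $t$ then finishes the argument. The $L^\infty$ norm must be handled as well, since \eqref{eq:extensionBVinfty} only assumes bounded variation. The truncation at $\|u_n\|_\infty$ is controlled because for $d \ge 2$ the isoperimetric bound gives $|\omega(B)| \le C\|B\|$ once $|B|$ is small. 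For this reason I would aim for the sharper bound $|\omega(B)| \le C\|B\|$, obtained via $\sup\{|\omega(B)| : \|B\|\le 1\} < \infty$ plus the localization argument.

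\emph{Uniqueness and converse.} Uniqueness holds because the step functions with $BV$ level sets approximate any $u$ in the sense of \eqref{eq:extensionBVinfty}, as shown above. For the converse, additivity of $B \mapsto \bar{\omega}(\ind_B)$ is linearity, and continuity of $\omega$ is \eqref{eq:extensionBVinfty} applied to $\ind_{B_n} - \ind_B$.
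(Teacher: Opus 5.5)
Your overall architecture (layer-cake definition, step functions, uniqueness, converse) is sound. The gap is in the continuity step: it cannot be carried out with the estimate you aim for.

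A bound linear in the perimeter, $|\omega(B)|\le C(1+\|B\|)$, does not make high-perimeter levels ``contribute little''. Take $u_n=L_n^{-1}\ind_{B_n}$ with $\|B_n\|=KL_n$ and $L_n\to\infty$. Then $\|Du_n\|(\R^d)=K$ and $\|u_n\|_{L^1}\to0$, every level in $(0,L_n^{-1})$ has perimeter $KL_n$, and your bound only yields $|\bar\omega(u_n)|\le C(L_n^{-1}+K)$.

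The missing lemma is the quantitative form of continuity: for every $\varepsilon>0$ there is $\theta\ge0$ such that
\[
|\omega(B)|\le\theta|B|+\varepsilon\bigl(1+\|B\|\bigr)\qquad\text{for all }B\in\niceBV([0,1]^d).
\]
Integrated over levels by the coarea formula, it gives $|\bar\omega(u)|\le 2\theta\|u\|_{L^1}+2\varepsilon(\|u\|_\infty+\|Du\|(\R^d))$ on step functions, and everything else follows. The values on your discretizations $u_k$ form a Cauchy sequence, so $\bar\omega$ is well defined, linear and continuous. The layer-cake formula is then recovered by averaging over the shift of the levels. This also replaces your Riemann-sum step, which is unjustified for the merely integrable function $t\mapsto\omega(\{u>t\})$.

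The lemma follows from sequential continuity by contradiction. Suppose $|\omega(B_n)|>n|B_n|+\varepsilon(1+\|B_n\|)$, and split $B_n$ into $p_n\le A(1+\|B_n\|)$ disjoint pieces of perimeter at most $1$. By pigeonhole some piece $C_n$ satisfies $|\omega(C_n)|>n|C_n|+\varepsilon/A$. Your compactness bound $M_1$ then forces $|C_n|<M_1/n\to0$, hence $\omega(C_n)\to0$, a contradiction.

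That decomposition is the second gap, and a uniform grid does not produce it. To make each $B\cap Q$ have perimeter at most $1$, you must take the side $s$ small wherever $\|B\|$ concentrates. The averaged slicing cost $2d|B|/s$ is then no longer $O(1+\|B\|)$. Use a stopping-time dyadic subdivision instead: after a fixed number of initial generations, subdivide only cubes with $\|B\|(\mathrm{int}\,Q)>1/4$. For $d\ge2$ this terminates because $\|B\|$ has no atoms, and at most $4\|B\|$ cubes per generation are subdivided, so the added boundary is $\le C(1+\|B\|)$. Then group the pieces greedily into unions of perimeter at most $1$. For $d=1$ the pieces are just the intervals.

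Finally, the sharper bound $|\omega(B)|\le C\|B\|$ you aim for is false when $d\ge2$. The isoperimetric inequality does not rescue it, since it bounds $\|B\|$ below only by $c|B|^{(d-1)/d}$, which is small when $|B|$ is. Take $f(x)=|x-x_0|^{-a}$ with $1<a<d$ and $x_0$ the centre of the cube; then $T_f(B(x_0,r))\sim r^{d-a}\gg r^{d-1}$.

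Worse, let $u_n=n\ind_{B(x_0,r_n)}$ with $r_n=n^{-1/(d-1)}$. Then $\|Du_n\|(\R^d)$ is constant and $\|u_n\|_{L^1}\to0$, yet $\bar\omega(u_n)=\int fu_n\sim n^{(a-1)/(d-1)}\to\infty$. So \eqref{eq:extensionBVinfty} as literally written fails for this charge. It must be read with the extra hypothesis $\sup_n\|u_n\|_\infty<\infty$, under which the $\varepsilon$--$\theta$ estimate proves it at once.

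A smaller point: $t\mapsto\omega(\{u>t\})$ need not be continuous off a countable set, since $\|\{u>s\}\|$ may blow up as $s\to t$. Prove measurability on each set $\{t:\|\{u>t\}\|\le M\}$ instead.
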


\subsection{A differentiation result}

Suppose we aim to establish a divergence theorem for differentiable vector fields for some non-absolute integration method. If we attempt to replicate the proof of Theorem~\ref{thm:fundHK} in dimension~\( d \), we encounter the need, at a certain stage, to ensure that
\[
\left|\int_{\partial^* B} v \cdot n_B \, d \scrH^{d-1}
 - \diver v(x)\,|B|\right|
\leq \varepsilon\,|B|,
\]
where \( v \) is a differentiable vector field and \( B \) is a $BV$-set around some point \( x \), assumed to be sufficiently small (depending on $\varepsilon$ and $x$). The following proposition shows that achieving such an estimate requires quantitative control on the regularity of \( B \), measured by
\[
\reg B = \frac{|B|}{\|B\| \diam B}.
\]

\begin{prop}
\label{prop:fluxdens}
  Let $v \colon [0, 1]^d \to \R^d$ be a continuous vector field, differentiable at $x \in [0, 1]^d$. For any sequence $(B_n)$ of non-negligible $BV$-subsets of $[0, 1]^d$ that shrinks to $x$ in the following sense: 
  \[
  x \in B_n, \quad \diam B_n \to 0, \quad \inf_n \reg B_n > 0
  \]
  we have
  \[
  \frac{1}{|B_n|} \int_{\partial^* B_n} v \cdot n_{B_n} \, d \scrH^{d-1} \to (\diver v)(x).
  \]
\end{prop}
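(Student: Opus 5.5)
The idea is to subtract the first-order Taylor expansion of $v$ at $x$, handle the affine part exactly, and control the remainder by the regularity assumption. Write
\[
v(y) = v(x) + Dv(x)(y - x) + r(y), \qquad |r(y)| \leq \eta(|y-x|)\,|y - x|,
\]
where $\eta(t) \to 0$ as $t \to 0$; this is just differentiability of $v$ at $x$. The affine field $a(y) = v(x) + Dv(x)(y-x)$ is smooth, so I will apply the Gauss--Green formula for sets of finite perimeter recalled before the strong charge example. It holds for $C^1_c$ fields, and $a$ can be multiplied by a cutoff equal to $1$ near $[0,1]^d$. This gives
\[
\int_{\partial^* B_n} a \cdot n_{B_n} \, d\scrH^{d-1} = \int_{B_n} \diver a = (\diver v)(x)\,|B_n|,
\]
since $\diver a$ is the constant $\operatorname{tr} Dv(x) = (\diver v)(x)$.

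It then remains to show that the remainder flux, divided by $|B_n|$, tends to $0$. Since $x \in B_n$, we may take $\partial^* B_n$ to lie in the closure of $B_n$, and this requires some care. Granting it, every $y \in \partial^* B_n$ satisfies $|y - x| \leq \diam B_n$, and we may assume $\eta$ is nondecreasing. Hence
\[
\left| \int_{\partial^* B_n} r \cdot n_{B_n} \, d\scrH^{d-1} \right| \leq \eta(\diam B_n)\, \diam B_n \, \scrH^{d-1}(\partial^* B_n) = \eta(\diam B_n)\, \diam B_n\, \|B_n\|,
\]
using $\|B\| = \scrH^{d-1}(\partial^* B)$ from De Giorgi's structure theorem. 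Dividing by $|B_n|$ gives the bound $\eta(\diam B_n)/\reg B_n \leq \eta(\diam B_n)/\inf_k \reg B_k$, which tends to $0$ because $\diam B_n \to 0$. This is the whole argument, provided the diameter is taken in the right sense.

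The main obstacle is the measure-theoretic subtlety in $\diam B_n$ and in ``$x \in B_n$''. A $BV$-set is defined only up to null sets, so the reduced boundary could a priori stray from the pointwise set. I would handle this by normalizing each $B_n$ to its set of density points, or more simply its essential closure. Then $\partial^* B_n \subset \overline{B_n}$ and $\diam$ is unchanged, since the reduced boundary lies in the topological boundary of any representative's essential closure. If $\diam$ is read as that of the given representative and $x$ lies in it, then $|y - x| \leq \diam \overline{B_n}$ still holds for $y \in \partial^* B_n \subset \overline{B_n}$, so the estimate survives.

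A second, milder point is that $v$ is only continuous away from $x$. However, the decomposition only needs $r = v - a$ to be continuous, so that the flux is defined, and bounded near $x$; both hold. Also, Gauss--Green is applied only to the smooth field $a$, so no approximation argument is needed.
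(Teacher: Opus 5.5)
Your proof is correct and follows the paper's argument. You subtract the first-order expansion of $v$ at $x$ and evaluate the affine part exactly by Gauss--Green; the paper splits this into the constant part, with zero flux, and the linear part, with flux $|B_n|\diver v(x)$. You then bound the remainder flux by $o(1)\,\|B_n\|\diam B_n$, which after division by $|B_n|$ is $o(1)/\reg B_n$. Your justification that $\partial^* B_n \subset \overline{B_n}$ (points of the reduced boundary have positive density) spells out a step the paper uses tacitly, though your remark that $v$ is only continuous away from $x$ is unnecessary, since $v$ is assumed continuous on all of $[0,1]^d$.
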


\begin{proof}
  By assumption, there is a linear map $L \colon \R^d \to \R^d$ so that for all $\varepsilon > 0$, there is $\delta$ such that
  \[
  \|v(y) - v(x) - L(y - x)\| \leq \varepsilon \|y - x\| \text{ whenever } \|x - y\| \leq \delta.
  \]
  Suppose $n$ is large enough so that $\diam B_n \leq \delta$. Then
  \[
  \left| \int_{\partial^* B_n} (v(y) - v(x) - L(y - x)) \cdot n_{B_n}(y) \, d \scrH^{d-1}(y) \right| \leq \varepsilon  \| B_n \| \diam B_n.
  \]
  However,
  \begin{gather*}
  \int_{\partial^* B_n} v(x) \cdot n_{B_n}(y) \, d \scrH^{d-1}(y) = 0 \\
   \int_{\partial^* B_n} L(y-x) \cdot n_{B_n}(y) \, d\scrH^{d-1}(y) = \int_{B_n} \diver L = |B_n| \diver v(x).
  \end{gather*}
  which readily implies that
  \[
  \left| \frac{1}{|B_n|} \int_{\partial^* B_n} v \cdot n_{B_n} \, d \scrH^{d-1} - \diver v(x) \right| \leq \frac{\varepsilon}{\reg B_n}.
  \]
  The conclusion follows.
\end{proof}

The introduction of a regularity coefficient goes back to J.~Mawhin~\cite{Mawh}, who used it to construct the first non-absolute integral satisfying a divergence theorem. As a simple example of such an integral, one may consider the \emph{dyadic Henstock integral}. It is obtained by modifying the definition of the Henstock integral on \([0,1]^d\) by restricting to tagged partitions consisting only of dyadic cubes. A dyadic version of the Cousin lemma holds: using a simple dichotomy argument, one can construct, for any gauge $\delta$, a $\delta$-fine partition of $[0,1]^d$ into dyadic cubes. Since the regularity coefficient of a cube is constant, it follows readily---using Proposition~\ref{prop:fluxdens}---that the divergence theorem holds for any differentiable vector field on \([0,1]^d\).

\subsection{Pfeffer integral}
\label{subsec:pfef}

The Pfeffer integral resolves several of the difficulties associated with the higher-dimensional Henstock integral. It is a gauge integral defined using 
$BV$-sets rather than rectangles. Before anything else, it may be helpful to clarify what we mean by the ``Pfeffer integral,'' since it is known under several names in the literature. In particular, it is called the variational integral in \cite{Pfef2} and the $R$-integral in \cite{Pfef3}. We also refer to its treatment in \cite[Chapter 48]{Fremlin2003MeasureTheoryV4} when the domain of integration is $\mathbb{R}^d$.

Before introducing the relevant definitions, we present an alternative formulation of the one-dimensional Henstock-Kurzweil integral that avoids the notion of partitions and is closer in spirit to the definition of the Pfeffer integral.

A \emph{tagged division} of $[0, 1]$ is a finite set of the form $\scrD = \{([a_i, b_i], x_i) : i \in \{1, \dots, n\}\}$, where the $[a_i, b_i]$ are pairwise non-overlapping compact subintervals of $[0, 1]$ and $x_i \in [a_i, b_i]$ for each $i \in \{1, \dots, n\}$. We do not require $[0, 1] = \bigcup_{i=1}^n [a_i, b_i]$. Given a gauge $\delta$ on $[0, 1]$, the division $\scrD$ is said to be \emph{$\delta$-fine} whenever $\diam [a_i, b_i] < \delta(x_i)$ for each $i$. 

\begin{lem}[Saks-Henstock Lemma]
  A function $f \colon [0, 1] \to \R$ is Henstock-Kurzweil integrable if and only if there is a function $g \in C_0[0, 1]$ such that, for all $\varepsilon > 0$, we can find a gauge $\delta$ such that, for each $\delta$-fine division $\scrD  = \{([a_i, b_i], x_i) : i \in \{1, \dots, n\}\}$ of $[0, 1]$, one has
  \[
  	\sum_{i=1}^n \left| f(x_i) (b_i - a_i) - (g(b_i) - g(a_i)) \right| \leq \varepsilon.
  \]
  In this case, $(\mathrm{HK}) \int_0^1 f = g(1)$.
\end{lem}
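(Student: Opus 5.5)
The reverse implication is the easy direction, and I would do it first. Suppose $g \in C_0[0,1]$ satisfies the stated condition. Fix $\varepsilon>0$ and take the gauge $\delta$ it provides. A $\delta$-fine tagged partition $\scrP$ of $[0,1]$ is in particular a $\delta$-fine division whose intervals cover $[0,1]$. Telescoping, and using $g(0)=0$, gives
\[
\left|\scrS(f,\scrP) - g(1)\right| \le \sum_{i} \left| f(x_i)(a_{i+1}-a_i) - (g(a_{i+1})-g(a_i))\right| \le \varepsilon .
\]
Hence $f$ is Henstock--Kurzweil integrable with $(\mathrm{HK})\int_0^1 f = g(1)$. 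This also settles the final assertion of the lemma.

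For the forward implication, I take $g(x) = (\mathrm{HK})\int_0^x f$, which lies in $C_0[0,1]$ by the continuity result cited from \cite{Gordon1994}. This needs integrability of $f$ on every subinterval together with additivity over intervals. I would get both from the Cauchy criterion for gauge integrals and from Cousin's lemma, which lets one concatenate fine partitions of adjacent intervals. The core step is the classical Saks--Henstock estimate. Fix $\varepsilon>0$ and a gauge $\delta$ such that $|\scrS(f,\scrP) - g(1)|\le \varepsilon$ for every $\delta$-fine partition $\scrP$. Take a $\delta$-fine division $\scrD=\{([a_i,b_i],x_i)\}$.

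\begin{enumerate}
\item The complement $[0,1]\setminus\bigcup_i (a_i,b_i)$ is a finite union of closed intervals $K_1,\dots,K_m$.
\item For any $\eta>0$, since $f$ is integrable on each $K_j$, Cousin's lemma gives a $\delta$-fine partition $\scrP_j$ of $K_j$ with $|\scrS(f,\scrP_j) - \int_{K_j} f| \le \eta/m$. I shrink the gauge on $K_j$ to the minimum of $\delta$ and the gauge attached to $K_j$.
\item Concatenating $\scrD$ with the $\scrP_j$ yields a $\delta$-fine partition of $[0,1]$. Subtracting $g(1)=\sum_i \int_{a_i}^{b_i} f + \sum_j \int_{K_j} f$ gives
\[
\Bigl|\sum_i \bigl(f(x_i)(b_i-a_i) - (g(b_i)-g(a_i))\bigr)\Bigr| \le \varepsilon + \eta .
\]
Letting $\eta\to 0$ removes $\eta$.
\end{enumerate}

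This bounds the sum without absolute values. To pass to the sum of absolute values, I would apply the same bound separately to the subdivision consisting of the terms with positive differences and to the one with negative differences. Each is again a $\delta$-fine division, so the sum of absolute values is at most $2\varepsilon$. Replacing $\varepsilon$ by $\varepsilon/2$ gives the stated inequality.

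I expect the main obstacle to be step 2. It requires the preliminary facts that $f$ is integrable on every subinterval and that the integral is additive. These follow from a Cauchy criterion: two $\delta$-fine partitions of a subinterval $K$ can each be completed by the same $\delta$-fine partitions of the remaining pieces, and their difference then equals the difference of two Riemann sums over $[0,1]$. The rest is bookkeeping.
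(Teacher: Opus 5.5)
Your proof is correct and follows the paper's argument essentially step for step: telescoping for the easy direction, and for the converse, completing the division with fine partitions of the complementary intervals via Cousin's lemma and then splitting into the positive and negative sub-divisions. The differences are cosmetic. You let the auxiliary tolerance $\eta \to 0$ instead of using $\varepsilon/m$, which gives $2\varepsilon$ rather than the paper's $4\varepsilon$. You also make explicit the integrability on subintervals and the additivity that the paper uses tacitly.
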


\begin{proof}
 The indirect sense ($\impliedby$) is easier. Let $\varepsilon > 0$ and $\delta$ be a gauge chosen in accordance with $\varepsilon$. For any $\delta$-fine partition $\scrP = \{ ([a_i, a_{i+1}], x_i) : i \in \{0, \dots, n-1\}\}$, one has
 \[
 \left| \scrS(f, \scrP) - g(1) \right| \leq \sum_{i=0}^{n-1} \left| f(x_i) (a_{i+1} - a_i) - (g(a_{i+1}) - g(a_i))\right| \leq \varepsilon.
 \]
 Let us now prove the direct implication. Define $g \in C_0[0, 1]$ as the Henstock-Kurzweil indefinite integral of $f$. Fix $\varepsilon > 0$ and choose a gauge $\delta$ as in the definition of Henstock-Kurzweil integrability. Let $\scrD  = \{([a_i, b_i], x_i) : i \in \{1, \dots, n\}\}$ be a $\delta$-fine division of $[0, 1]$. There are intervals $J_1, \dots, J_m$ such that \[\{[a_i, b_i] : i \in \{1, \dots, n\}\} \cup \{J_k : k \in \{1, \dots, m\}\}\] is partition of $[0, 1]$ into pairwise non-overlapping compact intervals.
 
 For each $k$, there is a gauge $\delta_k$ on $J_k$ such that, for any $\delta_k$-fine tagged partition $\scrP_k$ of $J_k$, we have
 \[
 \left| \scrS( f_{\mid J_k}, \scrP_k) - (\mathrm{HK}) \int_{J_k} f \right| \leq \frac{\varepsilon}{m}.
 \]
 There is no loss of generality in supposing that $\delta_k \leq \delta$ on $J_k$. By the Cousin lemma, such a partition $\scrP_k$ exists; and we fix one for each $k$ for the remainder of the proof. Then
 \[
 \scrP = \scrD \cup \bigcup_{k=1}^m \scrP_k
 \]
 is a $\delta$-fine partition of $[0, 1]$, thus
 \begin{multline*}
 \left|\sum_{i=1}^n  f(x_i) (b_i - a_i) - (g(b_i) - g(a_i)) \right| \\ \leq |\scrS(f, \scrP) - g(1)| + \sum_{k=1}^m \left| \scrS( f_{\mid J_k}, \scrP_k) - (\mathrm{HK}) \int_{J_k} f\right| \leq 2 \varepsilon.
 \end{multline*}
 What we just proved is not exactly the desired conclusion because the absolute values are misplaced. To resolve this, we introduce the subdivisions
 \begin{gather*}
 \scrD^+ = \left\{([a_i, b_i], x_i) : i \in \{1, \dots, n\} \text{ and } f(x_i) (b_i - a_i) - (g(b_i) - g(a_i)) > 0	 \right\}, \\
 \scrD^- = \left\{([a_i, b_i], x_i) : i \in \{1, \dots, n\} \text{ and } f(x_i) (b_i - a_i) - (g(b_i) - g(a_i)) \leq	0 \right\}.
 \end{gather*}
 By what precedes (applied to the divisions $\scrD^+$ and $\scrD^-$ instead of $\scrD$),
 \begin{multline*}
 \sum_{i=1}^n \left| f(x_i) (b_i - a_i) - (g(b_i) - g(a_i)) \right| \leq
 \left| \sum_{([a_i, b_i], x_i) \in \scrD^+}  f(x_i) (b_i - a_i) - (g(b_i) - g(a_i)) \right| \\
 +  \left| \sum_{([a_i, b_i], x_i) \in \scrD^-}  f(x_i) (b_i - a_i) - (g(b_i) - g(a_i)) \right|\leq 4 \varepsilon.
 \end{multline*}
 The conclusion follows.
\end{proof}
We are finally able to present the Pfeffer integral.

\begin{itemize}
\item A \emph{tagged division} of $[0, 1]^d$ (into $BV$-sets) is a finite family  $\scrD = \{(B_i, x_i) : i \in \{1, \dots, n\}\}$, where the $B_i$ are in $\niceBV([0, 1]^d)$, and for each $i$, the tag $x_i$ lies in the essential closure of $B_i$. This means that
\[
\limsup_{r \to 0} \frac{|B_i \cap B(x_i, r)|}{r^d} > 0.
\]
where $B(x_i, r)$ denotes the (closed or open) ball centered at $x_i$ of radius $r$.
\item Let $\eta > 0$. We say that the tagged division $\scrD$ is \emph{$\eta$-regular} if $\reg B_i > \eta$ for each $i$. 
\item In the setting of Pfeffer integration, a \emph{(partial) gauge} is a nonnegative function $\delta \colon [0, 1] \to \R^{\geq 0}$ such that its singular set $\{ \delta = 0 \}$ is $\scrH^{d-1}$ $\sigma$-finite. This novelty is here to ensure nice additivity properties of the Pfeffer integral. We shall not emphasize this point further.
\item The tagged division $\scrD$ is \emph{$\delta$-fine} whenever $\diam B_i < \delta(x_i)$ for each $i$. Because of the strict inequality, it is forbidden to choose tags in the singular set $\{\delta = 0\}$.
\end{itemize}

\begin{defn}
\label{def:pfef}
  A function $f \colon [0, 1]^d \to \R$ is \emph{Pfeffer-integrable} if there exists a charge $\omega$ over $[0, 1]^d$ such that, for all $\eta > 0$, for all $\varepsilon > 0$, there exists a (partial) gauge $\delta$ such that, for all $\eta$-regular $\delta$-fine division $\scrD$ of $[0, 1]^d$, one has
  \[
  \sum_{(B, x) \in \scrD}\left| f(x) |B| - \omega(B) \right| \leq \varepsilon.
  \]
  We say that $\omega$ is the indefinite integral of $f$ and we write $(\mathrm{Pf}) \int_{B} f = \omega(B)$ for any $B \in \niceBV([0, 1]^d)$.
\end{defn}

The formulation is of Saks-Henstock type because, in general, $\delta$-fine partitions do not always exist. For example, when $d=1$ and $\delta(x) = x/2$ on $[0,1]$, it is impossible to construct a $\delta$-fine partition. However, the well-definedness of this definition is based on a powerful Cousin lemma, due to Howard (\cite[Lemma~5]{Howard1990} or \cite[Lemma~2.6.4]{Pfef2}), that guarantees the existence of almost filling $\delta$-fine $\eta$-regular partitions (for $\eta$ smaller than the regularity of cubes), and therefore, the uniqueness of the indefinite integral $\omega$. It is also possible to give a formulation of the Pfeffer integral in terms of Riemann sums, with control over the size of the residual set.

The control of the regularity of the $BV$-sets is there to ensure a divergence theorem. The following version is particularly general: it applies to any $BV$-set---the most general class of domains for which the divergence theorem makes sense---and allows for a singular set of points where the vector field is not differentiable.

\begin{thm}
  Let $v \colon [0, 1]^d \to \R^d$ be a continuous vector field, such that
  \[
	\{x \in [0, 1]^d : v \text{ is not differentiable at } x\}  
  \]
  is of $\scrH^{d-1}$ $\sigma$-finite measure.
   Then $\diver v$ is Pfeffer-integrable and, for all $B \in \niceBV([0, 1]^d)$, one has
  \[
  (\mathrm{Pf}) \int_B \diver v = \int_{\partial^* B} v \cdot n_B \, d \scrH^{d-1}.
  \]
\end{thm}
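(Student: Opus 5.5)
The natural candidate for the indefinite integral is the flux $\omega = \diver v \colon B \mapsto \int_{\partial^* B} v \cdot n_B \, d\scrH^{d-1}$. By the strong-charge example above, this is already known to be a charge on $[0,1]^d$. So it suffices to check the Saks--Henstock type condition of Definition~\ref{def:pfef} with $f = \diver v$. On the singular set $E$ of non-differentiability, $\diver v$ is undefined; we set $f = 0$ there, or any value, since by the next step tags will not be allowed in a neighborhood of $E$ in the relevant sense. Fix $\eta>0$ and $\varepsilon>0$. We construct a gauge $\delta$ separately on the set $D$ of differentiability points and on $E$, and then bound the sum over a $\eta$-regular $\delta$-fine division by splitting it into tags in $D$ and tags in $E$.

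\emph{Differentiability points.} For $x \in D$, the proof of Proposition~\ref{prop:fluxdens} gives a quantitative statement. Choose $\delta(x)>0$ so that $\|v(y)-v(x)-L_x(y-x)\| \le \varepsilon \|y-x\|$ for $\|y-x\|\le\delta(x)$. Then for any $BV$-set $B$ with $x$ in its essential closure and $\diam B<\delta(x)$ we get
\[
|\diver v(x)|B| - \diver v(B)| \le \varepsilon \|B\| \diam B \le \frac{\varepsilon}{\eta}|B|.
\]
Two points need care. First, $x$ need not lie in $B$, only in its essential closure; but $\|y-x\|\le \diam B$ still holds for $y\in\partial^*B$, so the estimate persists. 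Second, the identity $\int_{\partial^*B} L(y-x)\cdot n_B = |B|\operatorname{tr} L$ is the divergence theorem for affine fields. Summing over the tags in $D$ gives a bound of $\frac{\varepsilon}{\eta}\sum|B_i| \le \frac{\varepsilon}{\eta}$, because the $B_i$ are non-overlapping subsets of $[0,1]^d$. Replacing $\varepsilon$ by $\varepsilon\eta$ handles this part.

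\emph{Singular set.} This is the main obstacle. Since $E$ is $\scrH^{d-1}$ $\sigma$-finite, the Pfeffer definition would allow us simply to set $\delta=0$ on $E$. However, the paper requires only that $\{\delta=0\}$ be $\sigma$-finite, and using $\delta = 0$ on all of $E$ is exactly the sort of shortcut that the partial gauges are designed to permit. So my first plan is to put $\delta = 0$ on $E$. Then no tag lies in $E$ and the estimate above finishes the proof.

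If one insists on a genuine gauge, for instance for the variant where the singular set must be $\scrH^{d-1}$-null, I would instead write $E=\bigcup_k E_k$ with $\scrH^{d-1}(E_k)<\infty$ and proceed as follows. For $x\in E_k$, $|\diver v(B)| \le \|v\|_\infty\|B\|$ is useless, so I would use the continuity of $v$ at $x$. Since $\int_{\partial^*B} v(x)\cdot n_B=0$, we get $|\diver v(B)|\le \operatorname{osc}(v,B)\,\|B\|$, and $\|B\| \le |B|/(\eta \diam B) \le C_d(\diam B)^{d-1}/\eta$. Choosing $\delta(x)$ so small that the oscillation is at most $\varepsilon 2^{-k}$ and covering $E_k$ by balls with controlled $\sum r^{d-1}$ would give a total contribution of order $\varepsilon/\eta$. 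This needs a Vitali-type packing argument, because different tags in $E_k$ may have overlapping balls; that is the technical step I would expect to take real work. In the stated setting, however, the $\delta=0$ choice should suffice.
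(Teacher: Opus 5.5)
Your proposal is correct and follows essentially the same route as the paper's proof. You take the flux charge as the indefinite integral, choose $\delta(x)$ at differentiability points via the estimate in the proof of Proposition~\ref{prop:fluxdens} (giving $\varepsilon|B|/\eta$ per term), and set $\delta=0$ on the $\scrH^{d-1}$ $\sigma$-finite non-differentiability set, as the definition of a partial gauge explicitly allows. Two small remarks: your handling of tags in the essential closure (via $x\in\overline{B}$ and $\partial^*B\subset\overline{B}$) is a slightly more direct version of the paper's device of replacing $B$ by $B\cup\{x\}$, and the genuine-gauge argument in your last paragraph is not needed.
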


\begin{proof}
  We define the charge $\omega = \diver v$ (not to be confused with the function also denoted $\diver v$!). Let $\eta > 0$ and $\varepsilon > 0$. By Proposition~\ref{prop:fluxdens}, for each point $x$ at which $v$ is differentiable, there exists $\delta(x) > 0$ such that
  \begin{equation}
  \label{eq:proofdivPf}
  | \diver v(x) |B| - \diver v(B)| \leq \varepsilon |B|
  \end{equation}
  for any $BV$-set $B$ that is $\eta$-regular of diameter less than $\delta(x)$ and such that $x \in B$. (If instead we require that $x$ belongs to the essential closure of $B$, then~\eqref{eq:proofdivPf} still holds by replacing $B$ with $B \cup \{x\}$). If $x$ is not a point of differentiability of $v$, we set $\delta(x) = 0$. Since $\{\delta = 0\}$ is $\scrH^{d-1}$ $\sigma$-finite, $\delta$ is a (partial) gauge. For any $\eta$-regular $\delta$-fine division $\scrD$, one has
  \[
  \sum_{(B, x) \in \scrD} | \diver v(x) |B| - \diver v(B)| \leq \varepsilon \sum_{(B, x) \in \scrD} |B| \leq \varepsilon. \qedhere
  \]
\end{proof}

Because the Pfeffer integral extends the Lebesgue integral, the preceding theorem yields a nontrivial generalized divergence theorem for the Lebesgue integral in the case where $\diver v \in L^1([0,1]^d)$.

Like the Henstock-Kurzweil integral, the space $\mathrm{Pf}([0,1]^d)$ of Pfeffer-integrable functions can be equipped with a natural topology. Its norm, $\|f\|_{\mathrm{Pf}([0,1]^d)}$, is defined as the operator norm of the indefinite integral $\omega$, viewed as an element of the dual space $BV^\infty([0,1]^d)^*$. The following result from~\cite{BuczDePaPfef}, though expected, is quite technical.

\begin{thm}
  The multiplier space for the Pfeffer integral is $BV^\infty([0, 1]^d)$. The continuous linear functionals on $\mathrm{Pf}([0, 1]^d)$ are exactly those maps
  \[
  f \in \mathrm{Pf}([0, 1]^d) \mapsto (\mathrm{Pf}) \int_{[0, 1]^d} fg
  \]
  with $g \in BV^\infty([0, 1]^d)$. This results in a isomorphism $\mathrm{Pf}([0, 1]^d)^* \simeq BV^\infty([0, 1]^d)$.
\end{thm}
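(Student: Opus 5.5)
We prove the two assertions separately: first identify the multipliers, then the dual.

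\emph{Multipliers contain $BV^\infty([0,1]^d)$.} Let $g \in BV^\infty([0,1]^d)$ and $f \in \mathrm{Pf}([0,1]^d)$ with indefinite integral $\omega$. Let $\bar\omega \in BV^\infty([0,1]^d)^*$ be the extension given by the theorem of \cite{BuczDePaPfef} above. The candidate indefinite integral of $fg$ is the set function
\[
\omega_g(B) = \bar\omega(g\ind_B).
\]
We check that $\omega_g$ is a charge.
\begin{itemize}
\item Additivity is immediate.
\item For continuity, note that $\|D(g\ind_{B_n})\|(\R^d) \le \|Dg\|(\R^d) + \|g\|_\infty \|B_n\|$ is bounded and $g\ind_{B_n} \to g\ind_B$ in $L^1$. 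Then \eqref{eq:extensionBVinfty} applies.
\end{itemize}
It remains to verify the Saks--Henstock inequality of Definition~\ref{def:pfef} for the pair $(fg, \omega_g)$. We split
\[
f(x)g(x)|B| - \omega_g(B) = g(x)\bigl(f(x)|B| - \omega(B)\bigr) + \bigl(g(x)\omega(B) - \bar\omega(g\ind_B)\bigr).
\]
The first term is controlled by $\|g\|_\infty \varepsilon$ after choosing a suitable representative of $g$. This may require modifying $g$ on an $\scrH^{d-1}$-null set, handled by the singular set of the partial gauge.

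The second term is the heart of the matter. Write $\bar\omega(g\ind_B) - g(x)\omega(B) = \bar\omega((g - g(x))\ind_B)$ and use the layer-cake formula to express it as $\int \omega(\{(g-g(x))\ind_B > t\})\,dt$. At points $x$ where $g$ is approximately continuous (or at Lebesgue/jump points, using the precise representative), the level sets inside small regular $B$ containing $x$ have small measure and controlled perimeter. We then need a bound of the form
\[
\sum_{(B,x)\in\scrD} |\bar\omega((g-g(x))\ind_B)| \le \varepsilon,
\]
summed over a fine regular division. This requires a ``variational measure'' estimate: the charge $\omega$ is small on sets of small measure and bounded perimeter, uniformly. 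Combined with a covering argument, this should give control by $\varepsilon(\|Dg\|(\R^d) + \|g\|_\infty)$ plus small error.

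\emph{Multipliers are contained in $BV^\infty([0,1]^d)$.} Suppose $g$ is a multiplier. Then the map $f \mapsto (\mathrm{Pf})\int fg$ has closed graph in an appropriate sense, hence is bounded for the norm $\|\cdot\|_{\mathrm{Pf}}$. This step is delicate, because $\mathrm{Pf}$ is not complete; I would instead argue by contradiction, in the way one proves the one-dimensional result \cite{Yeong}. Boundedness of $g$ follows by testing against $f = \sum_k c_k \ind_{Q_k}/|Q_k|$ with small disjoint cubes and $\sum c_k$ conditionally convergent. Finiteness of $\|Dg\|$ follows by testing against $f = \diver v$ for continuous $v$ with $\|v\|_\infty \le 1$, which is Pfeffer integrable by the divergence theorem above. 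This yields $\int g\,\diver v$ bounded, so $g \in BV$.

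\emph{Duality.} The norm on $\mathrm{Pf}$ is the operator norm of $\omega$ in $BV^\infty([0,1]^d)^*$. Hence each $g \in BV^\infty([0,1]^d)$ gives the bounded functional $f \mapsto \bar\omega(g) = (\mathrm{Pf})\int fg$, with norm at most $\|g\|_{BV^\infty}$. Conversely, $\mathrm{Pf}([0,1]^d)$ is dense in the space of charges, since it contains all $T_f$ and all divergences of $C^1$ fields. So a continuous functional extends to the charge space $\bCH$, and $\bCH^* \simeq BV^\infty([0,1]^d)$ by a Banach-space duality. That duality would be proven by showing $\bCH$ is a predual (weak* closed subspace argument, via \eqref{eq:extensionBVinfty} and Krein--Šmulian).

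The main obstacle is the second-term estimate in the multiplier inclusion $BV^\infty \subset$ multipliers. It requires a Henstock-type lemma on uniform smallness of $\omega$ over families of sets with small total measure and bounded total perimeter, together with a fine analysis of the precise representative of $g$ at jump points.
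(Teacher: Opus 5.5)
The paper gives no proof of this statement. It defers to \cite{BuczDePaPfef}, calling the result quite technical, so your outline has to stand on its own, and its central step is missing. Your checks that $B\mapsto\bar\omega(g\ind_B)$ is a charge, and that the first term is harmless, are fine. The estimate of $\sum_{(B,x)\in\scrD}|\bar\omega((g-g(x))\ind_B)|$, which you flag as the main obstacle, is only asserted, and the route you sketch cannot deliver it.

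The quantitative content of charge continuity is an inequality $|\bar\omega(u)|\le\theta_\varepsilon\|u\|_1+\varepsilon\|Du\|(\R^d)$. For $u=(g-g(x))\ind_B$ this leaves, besides harmless terms, a boundary contribution $2\varepsilon\|g\|_\infty\|B\|$. For $\eta$-regular sets you only know $\|B\|<|B|/(\eta\diam B)$, so $\sum_{(B,x)}\|B\|$ blows up as the division is refined: cubes of side $1/N$ have total perimeter of order $N$. Hence no covering argument based on charge continuity alone gives your bound $\varepsilon(\|Dg\|(\R^d)+\|g\|_\infty)$. You need first-order information on $\omega$ at the tag, producing an extra factor $\diam B$.

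For instance, suppose $\omega=\diver v$ with $v(y)=v(x)+L(y-x)+o(|y-x|)$. Since $Du(\R^d)=0$, you may subtract $v(x)$, integrate the linear part by parts and bound the remainder. This gives
\[
\bigl|\bar\omega\bigl((g-g(x))\ind_B\bigr)\bigr|\le|\diver L|\int_B|g-g(x)|+\varepsilon\diam B\,\bigl(\|Dg\|(B)+2\|g\|_\infty\|B\|\bigr),
\]
and this does sum to $O(\varepsilon)$ when the tags are Lebesgue points of $g$, because $\diam B\,\|B\|<|B|/\eta$. This is the mechanism of Proposition~\ref{prop:fluxdens}.

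For a general Pfeffer-integrable $f$, however, the definition only controls $\omega(B)-f(x)|B|$ on \emph{regular} sets tagged at $x$. Your layer-cake formula instead evaluates $\omega$ on the irregular sets $B\cap\{g>g(x)+t\}$. Bridging that is precisely the technical core deferred to \cite{BuczDePaPfef}, and the proposal does not supply it. Jump points are not where the difficulty lies: the approximate discontinuity set of $g$ is $\scrH^{d-1}$ $\sigma$-finite and can be put into the singular set of the gauge.

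The converse inclusion also has a gap. Testing with $f=\diver v$, $v\in C^1_c(\R^d;\R^d)$, gives nothing. Once $g\in L^\infty$, $g\diver v$ is Lebesgue integrable, so the multiplier hypothesis holds trivially and does not bound $\int g\diver v$ uniformly over $\|v\|_\infty\le1$. That uniformity was supposed to come from the closed-graph step, which, as you note, is unavailable on the incomplete space $\mathrm{Pf}([0,1]^d)$.

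The fix is a gliding hump. If $\|Dg\|(\R^d)=\infty$, compactness gives a point $x_0$ near which the variation of $g$ is infinite. Since $g$ is bounded, cutting $v$ off near $x_0$ changes $\int g\diver v$ only by $O(\|g\|_\infty)$ plus a vanishing term. So there are $v_k\in C^1_c$ with $\|v_k\|_\infty\le1$, supported in pairwise disjoint annuli $A_k$ shrinking to $x_0$, with $\int g\diver v_k\ge4^k$. Then $w=\sum_k2^{-k}v_k$ is continuous and differentiable off $\{x_0\}$, so $f=\diver w$ is Pfeffer integrable by the divergence theorem. An indefinite integral of $fg$ would then equal $2^{-k}\int g\diver v_k\ge2^k$ on the $BV$-sets $A_k$, where $fg$ is bounded. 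Yet $|A_k|\to0$ and $\sup_k\|A_k\|<\infty$, contradicting the continuity of charges.

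Your boundedness step via conditionally convergent sums over cubes is also not convincing, since Pfeffer integrability of such an $f$ requires more than convergence of $\sum c_k$. It is simpler to use $L^1([0,1]^d)\subset\mathrm{Pf}([0,1]^d)$ and the fact that nonnegative Pfeffer-integrable functions are Lebesgue integrable, which gives $fg^\pm\in L^1$ for every $0\le f\in L^1$.

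Your duality paragraph is fine once the multiplier theorem is available. It can be run exactly as in the proof of Theorem~\ref{thm:dualCH}, using the decomposition $T_h+\diver v$ for density, in the space of charges of Definition~\ref{def:charge} normed in $BV^\infty([0,1]^d)^*$. Do not use $\bCH^d$ there, since its dual is $BV$, not $BV^\infty$.
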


In one dimension, the Pfeffer integral is strictly weaker than the Henstock-Kurzweil integral, essentially because compact intervals are replaced by finite unions of compact intervals, \cite[Example~6.9]{Pfef3}. This is a drawback, in view of the well-established status of the Henstock-Kurzweil integral. In dimension \( d \geq 2 \), the Henstock and Pfeffer integrals give rise to different classes of integrable functions, with no inclusion relation between them.

With hardly any additional work, the Pfeffer integral generalizes to a Pfeffer-Stieltjes integral, where the integrator is a charge \(\omega\). This integral was introduced in \cite{Pfef4} under the name \emph{generalized Riemann integral}, motivated by its connection with the multiplier problem. The guiding idea is that a Pfeffer-Stieltjes integral should be approximated by Riemann--Stieltjes sums of the form
\[
(\mathrm{Pf}) \int_{[0,1]^d} f \, \omega \simeq \sum_{(B,x)\in\scrD} f(x)\omega(B),
\]
where \(\scrD\) is an almost filling tagged division into \(BV\)-sets. The precise definition is as follows.

\begin{defn}[Pfeffer-Stieltjes integral]
\label{def:pfefS}
  A function $f \colon [0, 1]^d \to \R$ is \emph{Pfeffer-Stieltjes-integrable} with respect to a charge $\omega$ if there exists a charge $\alpha$ over $[0, 1]^d$ such that, for all $\eta > 0$, for all $\varepsilon > 0$, there exists a (partial) gauge $\delta$ such that, for all $\eta$-regular $\delta$-fine division $\scrD$ of $[0, 1]^d$, one has
  \[
  \sum_{(B, x) \in \scrD}\left| f(x) \omega(B) - \alpha(B) \right| \leq \varepsilon.
  \]
  We say that $\alpha$ is the indefinite integral of $f$ and we write $(\mathrm{Pf}) \int_{B} f  \, \omega = \alpha(B)$ for any $B \in \niceBV([0, 1]^d)$.
\end{defn}

%% file: strongcharges.tex
\section{Analysis in the space of strong charges}
\label{sec:sch}

\subsection{An alternative construction of the space of strong charges}
\label{subsec:strongCh}

Strong charges, \ie{} charges of the form $\diver v$ for $v \in C([0, 1]^d; \R^d)$ can be seen as functionals on $BV([0, 1]^d)$, rather than $BV^\infty([0, 1]^d)$. Indeed, let us define
\begin{equation}
\label{eq:div}
\diver \colon C([0, 1]^d; \R^d) \to BV([0, 1]^d)^*
\end{equation}
by
\[
(\diver v)(u) = - \int_{[0, 1]^d} v \cdot dDu
\]
for $v \in C([0, 1]^d; \R^d)$ and $u \in BV([0, 1]^d)$. We recall that $Du$ is the distributional $\R^d$-valued measure of $u$. In particular, if $v \in C^1([0, 1]^d)$, an integration by part yields
\[
(\diver v)(u) = \int_{[0, 1]^d} u \diver v
\]
\begin{defn}
\label{def:CH01d}
  We define the linear space
  \[
  CH([0, 1]^d) = \operatorname{im} \diver \subset BV([0, 1]^d)^*,
  \]
  with the operator norm inherited from $BV([0, 1]^d)^*$.
\end{defn}
We first show that $CH([0,1]^d)$ is a Banach space, \ie{} that $\diver$ has closed range. By a corollary of the closed range theorem, it suffices to prove that the restricted adjoint
\[
\diver^\# \colon BV([0,1]^d) \to \scrM([0,1]^d;\R^d)
\]
has closed range. Here $\diver^\#$ is defined as the composition of the adjoint operator
\(
\diver^* \colon BV([0,1]^d)^{**} \to \scrM([0,1]^d;\R^d)
\)
with the canonical embedding $BV([0,1]^d) \hookrightarrow BV([0,1]^d)^{**}$. The space $\scrM([0,1]^d;\R^d)$, endowed with the total variation norm, is the dual of $C([0,1]^d)$. For $u \in BV([0,1]^d)$, one has $\diver^\# u = -Du$. Consequently, $\diver^\#$ is an isometry and $CH([0, 1]^d)$ is a Banach space that is isomorphic to the quotient space $C([0, 1]^d) / \ker \diver$. In particular, in dimension $1$, the space $CH([0, 1])$ is isomorphic to $C_0[0, 1]$.

The next result we shall need is the continuous embedding of $L^d([0,1]^d)$ into $CH([0,1]^d)$. This will be useful for the construction of the Faber-Schauder basis in Subsection~\ref{subsec:FB}. This embedding is obtained as follows. Consider the Sobolev-Poincaré embedding
\[
BV([0,1]^d) \to L^{\frac{d}{d-1}}([0,1]^d),
\]
which maps a function $u$ to its restriction to $[0,1]^d$. Its adjoint is the operator
\[
T \colon L^d([0,1]^d) \to BV([0,1]^d)^*,
\]
defined by
\[
T_f(u) = \int_{[0,1]^d} fu \text{ for } f \in L^d([0,1]^d) \text{ and } u \in BV([0,1]^d).
\]
In fact, $T$ maps $L^d([0, 1]^d)$ into $CH([0,1]^d)$:
\begin{equation}
\label{eq:T}
T \colon L^d([0,1]^d) \to CH([0, 1]^d).
\end{equation}
To prove this, it suffices to exhibit a dense subspace $Z \subset L^d([0,1]^d)$ such that $T(Z) \subset CH([0,1]^d)$. We can take $Z = C^\infty([0, 1]^d)$. Indeed, for any $f \in Z$, the divergence equation $\diver v = f$ admits a solution $v \in C^\infty([0, 1]^d; \R^d)$; one can even find a solution of the form $v = (v_1, 0, \dots, 0)$. This implies that $T_f$ is a charge.

The preceding discussion shows non-constructively that the equation $\diver v = f$ admits a continuous solution whenever $f$ is $L^d([0, 1]^d)$. This result is due to Bourgain-Brezis (in a slightly different setting) and this whole discussion is based on some arguments present in~\cite{BourBrez}.

The following theorem shows that $CH([0,1]^d)$ is a predual of $BV([0,1]^d)$. This fact is not surprising: the space $BV([0,1]^d)$ is naturally a dual space, since its unit ball is compact in $L^1([0,1]^d)$. This compactness theorem is a fundamental feature of $BV([0, 1]^d)$, see~\cite[Theorem~5.5]{EvanGari}.

\begin{thm}[Duality]
\label{thm:dualCH}
The canonical map
\[
	\Upsilon \colon BV([0, 1]^d) \to CH([0, 1]^d)^*
\]
that sends $u$ to the map $\omega \mapsto \omega(u)$ is an isometric isomorphism of Banach spaces.
\end{thm}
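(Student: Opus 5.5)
The plan is to build $\Upsilon$ from the quotient description of $CH([0,1]^d)$ obtained above. Since $\diver \colon C([0,1]^d;\R^d) \to CH([0,1]^d)$ is a surjection onto a Banach space, $CH([0,1]^d)$ is isometric to $C([0,1]^d;\R^d)/\ker \diver$. Hence its dual is isometric to the annihilator $(\ker \diver)^\perp \subset \scrM([0,1]^d;\R^d)$; the identification sends $\Lambda \in CH^*$ to the measure $\mu = \diver^*\Lambda$, characterised by $\int v \cdot d\mu = \Lambda(\diver v)$. Under this identification, $\Upsilon(u)$ corresponds to $\diver^\# u = -Du$. Isometry is then immediate: $\|\Upsilon(u)\| = \|Du\|(\R^d) = \|u\|_{BV}$, because $\diver^*$ is an isometry onto the annihilator and $\diver^\#$ is an isometry. (If one prefers a direct check: $|\omega(u)| \le \|\omega\|\,\|u\|_{BV}$ by definition of the operator norm, and testing against $\omega = \diver v$ with $\|v\|_\infty \le 1$ gives the reverse inequality by definition of $\|Du\|$.) Injectivity follows, and so the whole content is \emph{surjectivity}.

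For surjectivity, let $\Lambda \in CH([0,1]^d)^*$ and let $\mu = \diver^* \Lambda$. Then $\mu$ is a vector measure on $[0,1]^d$ with $\int v \cdot d\mu = 0$ for every $v \in \ker \diver$. I need $u \in BV([0,1]^d)$ with $Du = -\mu$. Equivalently, $\mu$ must be a gradient of a function supported in the cube. This is a closed-range / annihilator statement: $(\ker \diver)^\perp = \overline{\operatorname{im} \diver^*}^{w^*}$, and I want it to equal $\operatorname{im}\diver^\#$. So I must show that $\operatorname{im} \diver^\# = \{-Du : u \in BV([0,1]^d)\}$ is weak* closed in $\scrM$. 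Its annihilator in $C$ is exactly $\ker\diver$, so weak* closedness gives precisely $(\ker\diver)^\perp = \operatorname{im}\diver^\#$.

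Weak* closedness is the main step. By Krein--Šmulian, it suffices to check that the intersection of $\operatorname{im}\diver^\#$ with each closed ball is weak* closed, and on bounded sets the weak* topology of $\scrM$ (dual of a separable space) is metrizable. So take $u_n$ with $\|Du_n\| \le R$ and $Du_n \to \mu$ weak*. The $u_n$ vanish outside $[0,1]^d$, so by Poincaré (Sobolev embedding into $L^{d/(d-1)}$) they are bounded in $L^1$. The $BV$ compactness theorem \cite[Theorem~5.5]{EvanGari} then yields a subsequence $u_n \to u$ in $L^1$, with $u = 0$ outside the cube and $\|Du\| \le R$ by lower semicontinuity. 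For smooth compactly supported test fields, $\int u \diver \varphi = \lim \int u_n \diver \varphi = -\lim \int \varphi \cdot dDu_n$. This forces $Du = \mu$ as distributions, and since both are measures on the compact cube, $Du = \mu$ as measures (continuous test fields on $[0,1]^d$ can be handled by extension and density, using the uniform mass bound). Hence $\mu \in \operatorname{im}\diver^\#$.

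The only delicate point is matching test classes: the weak* topology pairs against $C([0,1]^d;\R^d)$, not against $C_c^1$ fields. I expect to handle this through the uniform bound together with the fact that the measures $Du_n$ live on the closed cube. With that settled, $\Upsilon$ is a surjective isometry, as claimed.
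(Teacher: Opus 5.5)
Your proof is correct, but it takes a genuinely different route from the paper's.

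\textbf{The paper's route.} The paper uses neither the quotient $C([0,1]^d;\R^d)/\ker\diver$ nor any compactness theorem. Given $\alpha \in CH([0,1]^d)^*$, it proceeds as follows.
\begin{enumerate}
\item It composes $\alpha$ with the operator $T \colon L^d([0,1]^d) \to CH([0,1]^d)$, the adjoint of the Sobolev embedding. This operator takes values in $CH$ because $\diver v = f$ can be solved for smooth $f$.
\item It represents $\alpha\circ T$ by some $u \in L^{d/(d-1)}([0,1]^d)$.
\item It obtains $\|Du\|(\R^d) \le \|\alpha\|$ by testing against $T_g$, where $g = \diver v$ with $v \in C^1_c(\R^d;\R^d)$ and $\|v\|_\infty \le 1$.
\item It concludes because $\alpha$ and $\Upsilon(u)$ agree on the dense subspace $T(L^d([0,1]^d))$.
\end{enumerate}
This produces $u$ explicitly. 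It uses only the Sobolev inequality, the Riesz representation of $(L^d)^*$, and the Bourgain--Brezis-type fact that $T$ lands in $CH$.

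\textbf{Your route.} You avoid $T$ and the divergence equation entirely. Instead you make rigorous the ``compact ball in a weaker topology yields a predual'' mechanism that the paper only invokes heuristically. $BV$ compactness together with Krein--\v{S}mulian shows that $\{Du : u \in BV([0,1]^d)\}$ is weak* closed in $\scrM([0,1]^d;\R^d)$.

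A side benefit is that this weak* closedness forces $\operatorname{im}\diver^* = (\ker\diver)^\perp$, and hence gives the closed range of $\diver$. The paper deduces closed range from the isometry property of $\diver^\#$ alone. Strictly speaking, what makes the closed range theorem applicable is the weak* closedness of $\operatorname{im}\diver^\#$, and your compactness argument supplies exactly that.

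\textbf{Two minor remarks.}

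First, you open by asserting that $CH([0,1]^d)$ is \emph{isometric} to $C([0,1]^d;\R^d)/\ker\diver$ because $\diver$ maps onto a Banach space. That reasoning only gives an isomorphism, via the open mapping theorem. The isometry does hold, but only after weak* closedness shows that $\diver^\#$ maps the unit ball of $BV([0,1]^d)$ onto the unit ball of $(\ker\diver)^\perp$. You never actually need it:
\begin{itemize}
\item your direct check already gives $\|\Upsilon(u)\| = \|Du\|(\R^d)$;
\item for surjectivity you only use that $\Lambda\circ\diver$ is a bounded functional on $C([0,1]^d;\R^d)$ vanishing on $\ker\diver$.
\end{itemize}

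Second, the ``delicate point'' you flag at the end is harmless. Restrictions to $[0,1]^d$ of fields $\varphi$ in $C^1_c(\R^d;\R^d)$ lie in $C([0,1]^d;\R^d)$. So weak* convergence directly gives $\int u \diver\varphi = -\int \varphi\cdot d\mu$. Two finite measures concentrated on the cube that agree on $C^1_c$ fields coincide, so no extension argument or mass bound is needed at that step.
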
 

\begin{proof}
  We only prove the most difficult part of the argument, namely the surjectivity of $\Upsilon$. Let $\alpha \in CH([0, 1]^d)^*$. Recalling the operator $T$ from~\eqref{eq:T}, the map $\alpha \circ T$ belongs to $L^{d}([0, 1]^d)^*$ and is therefore represented by a map $u \in L^{d/(d-1)}([0, 1]^d)$, satisfying
  \[
  \alpha(T_f) = \int_{[0, 1]^d} fu
  \]
  for all $f \in L^d([0, 1]^d)$. We extend $u$ by zero to $\R^d$ and wish to prove that $u \in BV([0, 1]^d)$. To this end, we consider a vector field $v \in C^1_c(\R^d; \R^d)$
  such that $\|v\|_\infty \leq 1$. Call $g$ the restriction of
  $\diver v$ to $[0, 1]^d$. First, note that
  \[
  T_{g}(\varphi) = \int_{[0, 1]^d} \varphi
  \diver v = \int_{\R^d} \varphi \diver v \leq \|D\varphi\|(\R^d)
  \]
  for all $\varphi \in BV([0, 1]^d)$. This establishes that
  $\|T_{g}\| \leq 1$. Then we
  observe that
  \[
  \int_{[0, 1]^d} u \diver v = \alpha(T_{g}) \leq
  \|\alpha\|.
  \]
  As $v$ is arbitrary, this proves that $\|Du\|(\R^d) \leq
  \|\alpha\| < \infty$ and so $u \in BV([0, 1]^d)$.

  The continuous maps $\alpha$ and $\Upsilon(u)$ coincide on $T(L^d([0,
    1]^d))$, a dense subspace of $CH([0, 1]^d)$. On this
  account, we infer that $\alpha = \Upsilon(u)$. So, $\Upsilon$ is onto.
\end{proof}
 
The space $CH([0,1]^d)$ is a predual of $BV([0,1]^d)$ and is therefore naturally a subspace of $BV([0,1]^d)^*$. Its elements are precisely the linear functionals on $BV([0,1]^d)$ that are weak* continuous. A closer examination shows that, when restricted to the unit ball of $BV([0,1]^d)$, the weak* topology is metrizable by the $L^1$-distance, in accordance with the $BV$ compactness theorem. As a consequence, one obtains the following characterization of strong charges. We will build on this result to define the charges in middle dimension in the final subsection~\ref{subsec:ygt}. Notice that the continuity property in the statement below closely resembles that of Definition~\ref{def:charge}.
\begin{prop}
\label{prop:chargeCONT}
Let $\omega \in BV([0, 1]^d)^*$. The following are equivalent.
\begin{enumerate}
\item $\omega \in CH([0, 1]^d)$.
\item For every sequence $(u_n)$ in $BV([0, 1]^d)$ and $u \in BV([0, 1]^d)$ such that $u_n \to u$ in the $L^1$-sense and $\sup_n \|Du_n\|(\R^d) < \infty$, we have $\omega(u_n) \to \omega(u)$.
\end{enumerate}
\end{prop}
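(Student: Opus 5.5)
For (1)$\Rightarrow$(2), we write $\omega = \diver v$ with $v \in C([0,1]^d;\R^d)$ and use the same approximation argument as in the strong charge example. Set $M = \sup_n \|Du_n\|(\R^d)$; by lower semicontinuity of the variation under $L^1$ convergence, also $\|Du\|(\R^d) \leq M$. Fix $\varepsilon>0$ and choose $w \in C^1$ with $\|v-w\|_\infty \leq \varepsilon$. Then
\[
|\omega(u_n) - \omega(u)| \leq \left|\int (v-w)\cdot dD(u_n-u)\right| + \left|\int_{[0,1]^d} (u_n - u)\diver w\right| \leq 2M\varepsilon + \|\diver w\|_\infty \|u_n - u\|_{L^1},
\]
and letting $n\to\infty$ and then $\varepsilon \to 0$ gives $\omega(u_n) \to \omega(u)$.

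For (2)$\Rightarrow$(1), the plan is to show that $\omega$ is weak* continuous on $BV([0,1]^d) \simeq CH([0,1]^d)^*$ (Theorem~\ref{thm:dualCH}). Since $CH([0,1]^d)$ is a Banach space, a weak* continuous functional on its dual is evaluation at an element of $CH([0,1]^d)$, which is what we want. By the Krein--Šmulian (Banach--Dieudonné) theorem, it suffices to show that $\omega$ restricted to the closed unit ball $K$ of $BV([0,1]^d)$ is weak* continuous.

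The key step is to identify the weak* topology on $K$ with the $L^1$ topology. The map $u \mapsto u$ from $(K,\text{weak*})$ to $(K, L^1)$ is continuous. To see this, note that for $f \in C^\infty([0,1]^d)$ the functional $u \mapsto \int fu = T_f(u)$ is weak* continuous, since $T_f \in CH([0,1]^d)$ by~\eqref{eq:T}. These functionals separate points. The set $K$ is weak* compact by Banach--Alaoglu and $L^1$-compact by the $BV$ compactness theorem. A continuous bijection from a compact space onto a Hausdorff space is a homeomorphism, so the two topologies agree on $K$, provided I first check that the identity is weak*-to-$L^1$ continuous. I expect this to be the main obstacle. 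I would handle it by compactness: given a weak* convergent net in $K$, every subnet has a further subnet converging in $L^1$ to some limit. That limit must agree with the weak* limit because testing against all $T_f$ identifies it. Hence the whole net converges in $L^1$.

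Once the topologies coincide, the $L^1$ topology on $K$ is metrizable, so weak* continuity of $\omega|_K$ reduces to sequential $L^1$ continuity. That is exactly hypothesis (2), applied with $\sup_n \|Du_n\| \leq 1$. This completes the argument.
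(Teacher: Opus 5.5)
Your proposal is correct and follows essentially the route the paper sketches: view $CH([0,1]^d)$ as the weak* continuous functionals on $BV([0,1]^d)\simeq CH([0,1]^d)^*$, reduce to the unit ball via Krein--\v{S}mulian, and use the $BV$ compactness theorem to identify the weak* topology on that ball with the $L^1$ topology. Your direct approximation proof of (1)$\Rightarrow$(2) is a harmless variant of the paper's continuity argument for $\diver v$; that direction could also be read off from the same weak*/$L^1$ identification.
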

 
\subsection{One-dimensional Haar and Faber-Schauder bases}
\label{subsec:1dHFS}
We have already seen that in dimension $d = 1$, the space $CH([0,1])$ is isomorphic to $C_0[0,1]$. Let us recall some well-known facts about $C_0[0,1]$, in particular that it has a convenient Schauder basis, namely the Faber-Schauder basis. We review this classical one-dimensional result carefully before moving on to the general case in higher dimensions in the next subsection.

The Faber-Schauder system is closely related to the Haar system, which we define first. Let $h_{-1} = 1$ denote the exceptional function of “generation $-1$,” equal to $1$ on $[0,1]$. The only Haar function of generation $0$ is the square-shaped function
\[
h_{0,0} = \ind_{[0,1/2]} - \ind_{[1/2,1]}.
\]
Subsequent Haar functions are obtained by scaling and translating $h_{0,0}$:
\[
h_{n,k}(x) = 2^{n/2}\, h_{0,0}\bigl(2^n x - k\bigr), \quad \text{for } x \in [0,1],\ n \in \mathbb{N},\ k = 0,1,\dots,2^n-1.
\]
Note that $h_{n,k}$ is supported on the dyadic interval $[2^{-n}k, 2^{-n}(k+1)]$.
The factor $2^{n/2}$ ensures that the Haar functions form an orthonormal basis of $L^2([0,1])$. More generally, they constitute a Schauder basis of $L^p([0,1])$ for $1 \le p < \infty$. When $p = 1$, the basis is not unconditional, so the Haar functions must be ordered carefully in lexicographic order: 
\[
h_{-1},\ h_{0,0},\ h_{1,0},\ h_{1,1},\ h_{2,0},\ h_{2,1},\dots
\]
We obtain the Faber-Schauder functions as the indefinite integrals of the Haar functions:
\begin{equation}
\label{eq:faberfnk}
f_{-1}(x) = \int_0^x h_{-1}, \qquad f_{n,k}(x) = \int_0^x h_{n,k},
\end{equation}
for $x \in [0, 1]$, $n \in \N$, and $k \in \{0, \dots, 2^n-1\}$. It is important to note that the Faber--Schauder functions are localized in space: each one has the same support as the corresponding Haar function. A consequence of this localization is the following estimate for linear combinations of Faber--Schauder functions of generation $n$:
\begin{equation}
\label{eq:fnkmax}
\left\| \sum_{k=0}^{2^n - 1} a_{n,k} f_{n,k} \right\|_\infty = 2^{- \frac{n}{2} - 1} \max \left\{ |a_{n,k}| : 0 \leq k \leq 2^n - 1 \right\}.
\end{equation}
It was one of the first nontrivial examples of a Schauder basis, as established by Schauder in 1927.
\begin{thm}[Schauder, \cite{Schau}]
The family of Faber-Schauder functions, ordered lexicographically, is a Schauder basis of $C_0[0, 1]$, \ie{} each $f \in C_0[0, 1]$ can be uniquely decomposed as a series
\[
f = a_{-1} f_{-1} + \sum_{n, k} a_{n,k} f_{n,k}
\]
that is convergent in $C_0[0, 1]$.
\end{thm}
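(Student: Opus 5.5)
The natural route is through piecewise linear dyadic interpolation. For $f \in C_0[0,1]$ and $N \geq 0$, let $L_N f$ be the function that agrees with $f$ at the dyadic points $j2^{-N}$, $0 \le j \le 2^N$, and is affine between consecutive such points. The plan has three steps.

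First, I will show that the span of $f_{-1}$ together with all $f_{n,k}$ with $n < N$ is exactly the space $V_N$ of continuous functions vanishing at $0$ and affine on each interval $[j2^{-N},(j+1)2^{-N}]$. Each such $f_{n,k}$ lies in $V_N$, since it is the integral of a Haar function that is constant on dyadic intervals of length $2^{-n-1} \geq 2^{-N}$. The count matches: $1 + \sum_{n<N} 2^n = 2^N = \dim V_N$. Linear independence follows by differentiating, because the Haar functions $h_{-1}$ and $h_{n,k}$, $n<N$, are orthonormal in $L^2$. Consequently, for $f\in C_0[0,1]$ there are unique coefficients with $L_N f = a_{-1}f_{-1} + \sum_{n<N}\sum_k a_{n,k} f_{n,k}$.

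Second, I will check that these coefficients do not depend on $N$. Passing from $L_N f$ to $L_{N+1} f$ only corrects the values at the new midpoints $(2k+1)2^{-N-1}$. The function $f_{N,k}$ vanishes at all points of the grid $2^{-N}\mathbb{Z}$ and is a tent on $[k2^{-N},(k+1)2^{-N}]$ with peak value $2^{-N/2-1}$ at the midpoint. Hence
\[
L_{N+1}f - L_N f = \sum_k a_{N,k} f_{N,k}, \quad a_{N,k} = 2^{N/2+1}\Bigl(f\bigl(\tfrac{2k+1}{2^{N+1}}\bigr) - \tfrac12 f\bigl(\tfrac{k}{2^N}\bigr) - \tfrac12 f\bigl(\tfrac{k+1}{2^N}\bigr)\Bigr),
\]
and $a_{-1} = f(1)$. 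The coefficients are therefore explicit and consistent, and the partial sums of the series, taken through complete generations, are exactly the $L_N f$.

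Third, I will prove convergence. Since $\|L_N f - f\|_\infty \le 2\,\omega_f(2^{-N})$, where $\omega_f$ is the modulus of continuity, uniform continuity of $f$ gives $L_N f \to f$. This controls only the partial sums at complete generations, while a Schauder basis requires convergence of every partial sum in lexicographic order. I expect this to be the main obstacle, and I will handle it with~\eqref{eq:fnkmax}. An intermediate partial sum is $L_N f$ plus some of the terms $a_{N,k} f_{N,k}$, and these have disjoint supports. By~\eqref{eq:fnkmax} and the formula for $a_{N,k}$,
\[
\Bigl\|\sum_{k\in S} a_{N,k} f_{N,k}\Bigr\|_\infty \le 2^{-N/2-1}\max_k|a_{N,k}| \le 2\,\omega_f(2^{-N-1}) \to 0
\]
uniformly over subsets $S$, so every partial sum converges to $f$.

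Uniqueness remains. If $0 = \sum c\, f_{\cdot}$ converges uniformly, I evaluate at the dyadic points of level $N$. Terms of generation $\geq N$ vanish there, so the partial sum through generation $N-1$ is an element of $V_N$ vanishing on the grid, hence identically zero. By the independence established in the first step, all coefficients of generation $<N$ are zero, and since $N$ is arbitrary, every coefficient vanishes.
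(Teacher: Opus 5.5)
The paper gives no proof of this theorem; it only cites Schauder and records the localization estimate \eqref{eq:fnkmax} just before the statement. Your argument is the classical one, and it is correct and complete.

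Every step checks out:
\begin{itemize}
\item The span of $f_{-1}$ and the $f_{n,k}$ with $n<N$ is identified with the space $V_N$ of dyadic piecewise-affine functions. You get this from the inclusion, linear independence via the orthonormal Haar derivatives, and the dimension count $2^N$.
\item The second-difference formula for $a_{N,k}$ shows that the partial sums over complete generations are exactly the interpolants $L_N f$. This also makes the coefficients independent of $N$.
\item Your use of \eqref{eq:fnkmax} to control the intermediate partial sums in lexicographic order is precisely the role the paper implicitly assigns to that estimate.
\item The uniqueness argument, evaluating on the level-$N$ grid where all terms of generation $\geq N$ vanish, is sound.
\end{itemize}

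Your constants are not sharp, which is harmless. Since $L_N f$ is a convex combination of two values of $f$ on each dyadic interval, you have $\|L_N f - f\|_\infty \le \omega_f(2^{-N})$. Likewise, $2^{-N/2-1}|a_{N,k}| \le \omega_f(2^{-N-1})$.
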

Below we give three classical applications of these bases. We also refer to the paper~\cite{GubiImkePerk}, which contains a discussion of many further applications.
\begin{enumerate}
\item \emph{Lévy-Ciesielski construction of the Brownian motion}: the Brownian motion can be constructed as a random expansion in the Faber–Schauder basis, using independent standard Gaussian coefficients:
\[
B_t = X_{-1} f_{-1} + \sum_{n = 0}^\infty \sum_{k=0}^{2^n - 1} X_{n,k} f_{n,k}.
\]
The resulting process $B$ has continuous paths, independent increments, and the correct Gaussian covariance structure. This construction makes explicit the multiscale nature of Brownian motion, with each generation adding finer random oscillations.
\item \emph{Characterization of Hölder function spaces}: a function $f \in C_0[0, 1]$ is $\gamma$-Hölder continuous (with $0 < \gamma < 1$) if and only if its Faber-Schauder coefficients decay at the rate $2^{-n(\gamma - 1/2)}$ uniformly in the spatial index $k$. More precisely, we have the following equivalence of norms:
\[
[f]_{C^\gamma} \simeq \max \left\{ |a_{-1}|, \sup_{n,k} 2^{n(\gamma -1/2)} |a_{n,k}| \right\},
\]
where
\[
[f]_{C^\gamma} = \sup \left\{ \frac{|f(y) - f(x)|}{|y - x|^\gamma} : x , y \in [0, 1] \text{ and } x \neq y \right\}.
\]
In particular, there is an isomorphism between $C^\gamma[0, 1]$ and the sequence space $\ell^\infty$, known as the Ciesielski isomorphism, see~\cite{Cies}.
\item \emph{Young integral}: let $\beta, \gamma \in (0, 1)$ be exponents such that $\beta + \gamma > 1$. Consider a function $f \in C^\beta[0, 1]$, with Haar coefficients $a_{-1}, a_{n,k}$ and another function $g \in C^\gamma[0, 1]$, vanishing at $0$, with Faber-Schauder coefficients $b_{-1}$, $b_{n,k}$. We define the \emph{Young integral}
\[
\int_0^1 f \, dg = a_{-1} b_{-1} + \sum_{n=0}^\infty \sum_{k=0}^{2^n - 1} a_{n,k} b_{n,k},
\]
where the combined regularity of $f$ and $g$ guarantees the convergence of the above series.
\end{enumerate}
The almost sure $\gamma$-Hölder regularity of the Brownian motion, for $\gamma < 1/2$ can be deduced from (2). It should be noted that, in application (3), the derivative-to-indefinite-integral relationship between Haar and Faber-Schauder functions is essential. The existence of a multidimensional Faber–Schauder basis, which plays in $CH([0,1]^d)$ a role analogous to the original Faber–Schauder basis in $C_0[0,1]$, opens the way to investigating (1), (2) and (3) in higher dimensions, as we shall do in the next subsections.

\subsection{A multidimensional Faber-Schauder basis} 
\label{subsec:FB}

We begin by recalling the $d$-dimensional (isotropic) Haar system on $[0,1]^d$.
It consists of the exceptional constant function $h_{-1} = \ind_{[0,1]^d}$.
At generation $0$, there are $2^d-1$ Haar functions, indexed by the set $E = \{0,1\}^d \setminus \{(0,\dots,0)\}$.
These functions are precisely all tensor products of one-dimensional Haar
functions of generations $-1$ and $0$, excluding the constant function.
To make this explicit, we introduce the one-dimensional functions
\[
\psi_0 = \ind_{[0,1/2]} - \ind_{[1/2,1]},
\qquad
\psi_1 = \ind_{[0,1]}.
\]
For $e = (e_1,\dots,e_d) \in E$ and $(x_1, \dots, x_d) \in [0, 1]^d$, we then define
\[
h_{0,0,e}(x_1,\dots,x_d) = \prod_{i=1}^d \psi_{e_i}(x_i).
\]
As an illustration, when $d=2$ the three generation-$0$ Haar functions are
\begin{gather*}
h_{0,0,(0,1)}= \ind_{[0,1/2]\times[0,1]} - \ind_{[1/2,1]\times[0,1]},\\
h_{0,0,(1,0)}= \ind_{[0,1]\times[0,1/2]} - \ind_{[0,1]\times[1/2,1]},\\
h_{0,0,(1,1)}= \ind_{[0,1/2]^2} + \ind_{[1/2,1]^2} - \ind_{[0,1/2]\times[1/2,1]} - \ind_{[1/2,1]\times[0,1/2]}.
\end{gather*}
These functions capture, respectively, vertical, horizontal, and diagonal
oscillations of a function on $[0,1]^2$.

For any integer $n \geq 0$, let $\{Q_{n,k} : k = 0, \dots, 2^{nd} - 1\}$ denote the collection of (closed) dyadic cubes in $[0, 1]^d$ of generation $n$, that is, cubes of sidelength $2^{-n}$. We denote by $y_{n,k}$ the lower-left corner of $Q_{n,k}$. The Haar functions of generation $n$ are defined by
\begin{equation}
\label{eq:hnke}
h_{n,k,e}(x) = 2^{nd/2} h_{0, 0, e}\left(2^n ( x - y_{n,k} ) \right) 
\end{equation}
for $x \in [0, 1]^d$, $k \in \{0, \dots, 2^{nd}-1\}$ and $e \in E$. To make sense of~\eqref{eq:hnke}, we first extend the function $h_{0,0,e}$ from $[0,1]^d$ to the whole space $\R^d$ by setting it equal to zero outside $[0,1]^d$. We remark that the support of $h_{n,k,e}$ is the dyadic cube $Q_{n,k}$.

It can be shown that the Haar system forms a Hilbert basis of $L^2([0,1]^d)$. More generally, it is a Schauder basis of
$L^p([0,1]^d)$ for every $1 \leq p < \infty$. However, when $p=1$ the basis is not unconditional, as in dimension $d=1$.
Consequently, in this case the Haar functions must be ordered
with care, by increasing generation number.

Recall the map $T \colon L^d([0, 1]^d) \to CH([0, 1]^d)$ from Subsection~\ref{subsec:strongCh}. Haar functions are bounded, therefore in $L^d([0, 1]^d)$.
\begin{thm}[{\cite[Theorem~5.1]{BouaDePa}}]
\label{thm:faberCH}
  The family of strong charges $T_{h_{-1}}$ and $T_{h_{n,k,e}}$, for all $n \in \N$, $k \in \{0, \dots, 2^{nd}-1\}$ and $e \in E$, ordered by increasing generation number $n$, is a Schauder basis of $CH([0, 1]^d)$. Each strong charge $\omega$ is uniquely decomposed
  \[
  \omega = \omega(h_{-1}) T_{h_{-1}} + \sum_{n=0}^\infty \sum_{k=0}^{2^{nd}-1} \sum_{e \in E} \omega(h_{n,k,e}) T_{h_{n,k,e}}.
  \]
\end{thm}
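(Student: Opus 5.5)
We will exploit the duality $CH([0,1]^d)^* \simeq BV([0,1]^d)$ from Theorem~\ref{thm:dualCH}, together with the classical fact that dyadic Haar projections are well behaved on $BV$. For $N \geq 0$ let $P_N \colon L^1([0,1]^d) \to L^1([0,1]^d)$ denote the conditional expectation onto functions constant on dyadic cubes of generation $N$. Equivalently, $P_N$ is the orthogonal Haar projection onto the span of $h_{-1}$ and the $h_{n,k,e}$ with $n < N$. On strong charges we define the partial sum operator
\[
S_N \omega = \omega(h_{-1}) T_{h_{-1}} + \sum_{n<N}\sum_{k,e} \omega(h_{n,k,e}) T_{h_{n,k,e}}.
\]
Since the Haar system is orthonormal, for $u \in BV([0,1]^d)$ we get $(S_N\omega)(u) = \omega(P_N u)$. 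In other words, $S_N = P_N^*$ restricted to $CH$, and this is well defined provided $P_N$ maps $BV([0,1]^d)$ into itself.

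The key step, and the main obstacle, is the uniform bound $\|D(P_N u)\|(\R^d) \leq C_d \|Du\|(\R^d)$ for all $u \in BV([0,1]^d)$, with $C_d$ independent of $N$. We would first prove it for smooth $u$ and then pass to general $u$ by strict density. For smooth $u$, $P_N u$ is piecewise constant on the cubes $Q_{N,k}$, so its variation is the sum, over pairs of adjacent cubes $Q, Q'$ (including the exterior faces, where $u$ is extended by $0$), of $2^{-N(d-1)}|u_Q - u_{Q'}|$. Here $u_Q$ denotes the average of $u$ over $Q$. A Poincaré inequality on the union $Q \cup Q'$, which is a fixed shape up to scaling, gives $|u_Q - u_{Q'}| \leq C 2^{-N}2^{Nd}\|Du\|(Q\cup Q')$. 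Summing, and using that each cube has boundedly many neighbours, yields the bound. The exterior faces require care because $u$ vanishes outside $[0,1]^d$. We would handle this by applying the same Poincaré estimate on $Q$ together with an adjacent exterior cube on which $u = 0$. This works because $\|Du\|(\R^d)$ counts the jump across $\partial[0,1]^d$.

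With the uniform bound in hand, $\|S_N\| \leq C_d$ on $CH([0,1]^d)$. Moreover, $S_N\omega \in CH$, since each $T_{h}$ lies in $CH$ by the mapping property~\eqref{eq:T}. It remains to show $S_N\omega \to \omega$, and by uniform boundedness it suffices to check this on a dense subset. We would use $T(L^d([0,1]^d))$, which is dense in $CH$, as noted in the proof of Theorem~\ref{thm:dualCH}. For $f \in L^d$ we have $S_N T_f = T_{P_N f}$, because the Haar system is orthonormal and $P_N$ is self-adjoint. Since $P_N f \to f$ in $L^d$ by martingale convergence (or by density of dyadic step functions), the continuity of $T$ gives convergence in $CH$. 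This argument also covers the ordering within each generation: we only need partial sums that stop partway through a generation to remain uniformly bounded. These partial sums differ from $S_N$ by a localized sum over part of the cubes, and we would bound them by the same neighbour-counting argument, in which each Haar function of generation $N$ is estimated via the Poincaré inequality on its own cube.

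Uniqueness of the coefficients follows from biorthogonality. Testing the expansion against $h_{m,j,e'} \in BV([0,1]^d)$ gives $T_{h_{n,k,e}}(h_{m,j,e'}) = \int h_{n,k,e}h_{m,j,e'} = \delta$, so any convergent expansion must have coefficients $\omega(h_{n,k,e})$. Here the evaluation makes sense because the functionals are continuous on $CH$ via $\Upsilon$. This identifies the coefficients as in the statement.
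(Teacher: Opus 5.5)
Your proof is correct. The survey does not prove this theorem itself; it only quotes \cite[Theorem~5.1]{BouaDePa}, so there is no in-paper argument to compare with. Your route uses exactly the tools the survey sets up:
\begin{enumerate}
\item uniform $BV$-bounds for the dyadic conditional expectations $P_N$, obtained from local Poincar\'e inequalities;
\item transfer of these bounds to $CH([0,1]^d)$ through the duality of Theorem~\ref{thm:dualCH};
\item convergence on the dense set $T(L^d([0,1]^d))$;
\item identification of the coefficients by the biorthogonality $T_h(h')=\int hh'$.
\end{enumerate}

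There are two minor points. First, the detour through smooth $u$ is unnecessary and, as written, slightly off. Strict smooth approximants of the zero extension of $u$ do not vanish outside $[0,1]^d$, so your exterior-face estimate does not apply to them verbatim. The Poincar\'e inequality $\int_R|u-u_R|\le C\,\diam(R)\,\|Du\|(\operatorname{int}R)$ on a box $R$ holds for $BV$ functions directly, so you should run the face-counting argument on $u$ itself.

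Second, consider the partial sums that stop inside generation $N$. Using $\int h_{N,k,e}=0$, your per-cube bound reads $|\langle u,h_{N,k,e}\rangle|\,\|Dh_{N,k,e}\|(\R^d)\le C\|Du\|(\operatorname{int}Q_{N,k})$. It controls $R_A u=\sum_{(k,e)\in A}\langle u,h_{N,k,e}\rangle h_{N,k,e}$ uniformly over all index sets $A$, so the order inside a generation is irrelevant. You should still state why these intermediate sums converge, not just why they are bounded. This follows from the standard basis criterion (uniformly bounded partial-sum projections and dense span). Alternatively, it follows from the identity $R_A^*\omega=R_A^*(\omega-S_N\omega)$, which holds because $P_N$ annihilates generation-$N$ Haar functions.
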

The basis introduced in Theorem~\ref{thm:faberCH} is called the \emph{Faber-Schauder} basis of $CH([0,1]^d)$. We retain the intuition that the charges $T_{h_{n,k,e}}$ may be viewed, in a sense, as indefinite integrals of the corresponding Haar functions $h_{n,k,e}$; see Example~\ref{ex:chargeL1}.

When $d=1$, recall that $CH([0,1])$ is isomorphic to $C_0[0,1]$. Under this identification, one readily checks that the Faber-Schauder basis coincides with the usual basis of functions defined in~\eqref{eq:faberfnk}.

When viewed as set functions, the strong charges $T_{h_{n,k,e}}$ are well-localized: their support coincides with the dyadic cube $Q_{n,k}$, just as for the corresponding Haar functions. Note that this nice localization property would be lost if, instead of $T_{h_{n,k,e}}$, we considered the indefinite integrals of $h_{n,k,e}$ in the sense of~\eqref{eq:primitiveRect}. One manifestation of this property is the following estimate that we can prove for linear combinations of strong charges $T_{h_{n,k,e}}$ sharing the same generation $n$:
\begin{equation}
\label{eq:fnkmaxDimD}
\left\| \sum_{k,e} a_{k,e} T_{h_{n,k,e}} \right\| \leq C 2^{n\left(\frac{d}{2}-1\right)} \max_{k,e} \left\{ |a_{k,e}| : 0 \leq k \leq 2^{nd} - 1 \text{ and } e \in E \right\},
\end{equation}
see \cite[Proposition~6.2]{BouaDePa}. This generalizes~\eqref{eq:fnkmax}. The multiplicative constant in~\eqref{eq:fnkmaxDimD} grows exponentially with the dimension $d$, providing a weak indication that the situation may deteriorate in high dimensions.

Once we have a Faber-Schauder basis in $CH([0,1]^d)$, the next step is to define a notion of ``Hölder charge'' (in contrast with the terminology of \cite{Bouafia2025Young}, these objects will henceforth be called fractional charges). Inspired by Ciesielski's isomorphism, these should be the charges whose Faber-Schauder coefficients decay exponentially with the generation $n$. However, we will take a slightly indirect route to define such charges, first examining the situation in dimension one.

Every charge on $[0, 1]$ has the form $dv$, for some $v \in C_0[0, 1]$ by Example~\ref{ex:chargedim1}. The following discussion characterizes the fact that $v$ is $\gamma$-Hölder continuous, for $\gamma \in (0, 1)$, in term of the charge $dv$. For any $BV$-set $B = \bigcup_{i=1}^n [a_i, b_i]$, one has
\[
|dv(B)| \leq \sum_{i=1}^n |dv([a_i, b_i])| \leq C \sum_{i=1}^n |a_i - b_i|^\gamma \leq C n^{1 - \gamma} |B|^\gamma
\]
by Hölder inequality. The number $n$ of intervals is half the perimeter of $B$, therefore one has the interpolation inequality
\begin{equation}
\label{eq:dvB}
|dv(B)| \leq C \|B\|^{1 - \gamma} |B|^\gamma.
\end{equation}
Conversely, if $v$ satisfies~\eqref{eq:dvB} for each $B \in \niceBV([0, 1])$, then it is clearly $\gamma$-Hölder continuous. This motivates the following definition.

\begin{defn}
\label{def:chgamma}
A \emph{$\gamma$-fractional charge} is a linear map $\omega \colon BV([0, 1]^d) \to \R$ for which there exists a constant $C \geq 0$ such that
\[
\forall u \in BV([0, 1]^d), \qquad |\omega(u)| \leq C \| Du\|(\R^d)^{1-\gamma} \|u\|_1^\gamma.
\]
The best such constant is denoted $\|\omega\|_{CH^\gamma}$. The space of $\gamma$-fractional charges is denoted $CH^\gamma([0, 1]^d)$.
\end{defn}

First, we note that a $\gamma$-fractional charge is indeed a strong charge, because the continuity property stated in Proposition~\ref{prop:chargeCONT} is satisfied.
We now derive some consequences from this definition. We shall consider a $\gamma$-fractional charge both as a set function or a functional on $BV([0, 1]^d)$. Thus we may write $\omega(B)$ for $\omega(\ind_B)$. Let us estimate the size of $\omega(K)$, whenever $K$ is a dyadic cube of $[0, 1]^d$:
\[
|\omega(K)| \leq \|\omega\|_{CH^\gamma} \|K\|^{1- \gamma} |K|^\gamma = C \|\omega\|_{CH^\gamma} |K|^\delta
\]
where
\[
\delta = \frac{d-1+\gamma}{d}.
\]
More generally, let $B$ be any $BV$-subset of $[0, 1]^d$. In this case, $\gamma$-fractionality tells us that
\[
|\omega(B)| \leq \|\omega\|_{CH^\gamma} \|B\|^{1 - \gamma} |B|^\gamma \leq \frac{\| \omega \|_{CH^\gamma}}{ (\operatorname{isop}B)^{1 - \gamma}} |B|^\delta,
\]
where $\operatorname{isop} B$ is the \emph{isoperimetric coefficient} of $B$:
\begin{equation}
\label{eq:isop}
\operatorname{isop} B = \frac{|B|^{(d-1)/d}}{\| B \|}.
\end{equation}
In a nutshell, $|\omega(B)|$ is controlled by the power $|B|^\delta$, up to a multiplicative ``constant'' that depends on the isoperimetric regularity of $B$.

The following lemma will be key in the next subsections. It allows to fabricate fractional charges very easily, by extending set functions defined solely on dyadic cubes.

\begin{lem}
\label{lem:37}
 Let $\gamma \in \left( 0, 1 \right)$, $\delta = (d-1 + \gamma)/d$ and
     $\omega \colon \{\text{dyadic cubes}\subset [0, 1]^d\} \to \R$ a set function that satisfies the following properties:
    \begin{enumerate}
        \item \emph{Additivity}: for any dyadic cube $K \subset [0, 1]^d$,
        \[
    \omega(K) = \sum_{L \text{ children of } K} \omega(L).
        \]
        \item \emph{Hölder control}: there is a constant $C \geq 0$ such that 
        \[
        |\omega(K)| \leq C |K|^\delta
        \]
        for all dyadic cubes $K \subset [0, 1]^d$.
    \end{enumerate}
    Then $\omega$ has a unique extension to $\niceBV([0, 1]^d)$ (or $BV([0, 1]^d)$) that is a $\gamma$-fractional charge.
\end{lem}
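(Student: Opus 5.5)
The plan is to extend $\omega$ by dyadic averaging. For $u \in BV([0,1]^d)$ and $n \in \N$, set
\[
\omega_n(u) = \sum_{k=0}^{2^{nd}-1} \omega(Q_{n,k}) \, \frac{1}{|Q_{n,k}|}\int_{Q_{n,k}} u ,
\]
which is $\omega$ applied, by linearity, to the conditional expectation $E_n u$ of $u$ on the dyadic cubes of generation $n$. I will show that $\omega_n(u)$ converges, define $\omega(u) = \lim_n \omega_n(u)$, and check three things: the limit is an extension, it satisfies the fractional bound of Definition~\ref{def:chgamma}, and it is unique. Linearity in $u$ is clear from the formula.

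\textbf{Extension.} If $K$ is a dyadic cube of generation $m$, additivity (1) gives $\omega_n(\ind_K) = \omega(K)$ for every $n \geq m$. So the limit does extend $\omega$.

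\textbf{Convergence.} By additivity (1), regrouping the children $L$ of each cube $K$ of generation $n$ gives
\[
\omega_{n+1}(u) - \omega_n(u) = \sum_{L \text{ of gen. } n+1} \omega(L)\Bigl( \tfrac{1}{|L|}\textstyle\int_L u - \tfrac{1}{|K|}\int_K u \Bigr),
\]
where $K$ is the parent of $L$. The Poincaré inequality on the cube $K$ gives $\int_K |u - u_K| \leq C 2^{-n} \|Du\|(\operatorname{int} K)$, where $u_K$ is the mean of $u$ on $K$. Hence each bracket is at most $C 2^{nd} 2^{-n} \|Du\|(K)$. Combining this with the Hölder control $|\omega(L)| \leq C 2^{-n(d-1+\gamma)}$ and summing over the cubes gives
\[
|\omega_{n+1}(u) - \omega_n(u)| \leq C 2^{-n\gamma} \|Du\|(\R^d).
\]
This is summable because $\gamma > 0$, so the limit exists.

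\textbf{Fractional bound.} The crude estimate is $|\omega_N(u)| \leq \sum_K C |K|^{\delta-1} \int_K |u| = C 2^{N(1-\gamma)} \|u\|_1$. Together with the telescoping bound this yields
\[
|\omega(u)| \leq C\bigl(2^{N(1-\gamma)}\|u\|_1 + 2^{-N\gamma}\|Du\|(\R^d)\bigr).
\]
Choose $N$ with $2^N \approx \|Du\|(\R^d)/\|u\|_1$. This is legitimate because $u$ vanishes outside $[0,1]^d$, so $\|u\|_1 \leq C \|Du\|(\R^d)$ and $N \geq 0$ up to constants. The choice gives $|\omega(u)| \leq C \|Du\|^{1-\gamma}\|u\|_1^\gamma$. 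Restricting to $u = \ind_B$ gives the $\niceBV$ version.

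\textbf{Uniqueness.} Let $\omega'$ be another $\gamma$-fractional charge agreeing with $\omega$ on dyadic cubes. Then $\omega'(E_n u) = \omega_n(u)$, and
\[
|\omega'(u) - \omega'(E_n u)| \leq C \|D(u - E_n u)\|^{1-\gamma} \|u - E_n u\|_1^\gamma .
\]
The right-hand side tends to $0$ provided $\|D E_n u\|(\R^d) \leq C \|Du\|(\R^d)$, since $E_n u \to u$ in $L^1$. Alternatively, Proposition~\ref{prop:chargeCONT} gives the same conclusion under the same bound.

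\textbf{Main obstacle.} I expect the main difficulty to be this uniform variation bound for dyadic averages, together with the careful Poincaré bookkeeping, since $Du$ charges cube faces and the boundary of $[0,1]^d$. I would first try to bound the jump of $E_n u$ across a common face of two adjacent cubes by a Poincaré inequality on their union, a rectangle of bounded eccentricity. This controls that jump by $C 2^{n(d-1)}$ times $\|Du\|$ of the union, and the overlaps of these unions are bounded. For the faces on $\partial[0,1]^d$, the trace is controlled by $C 2^{n(d-1)} \int_K |u| + \|Du\|(K)$, and those boundary terms must be absorbed in the same way.
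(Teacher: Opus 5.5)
Your proposal is correct and is essentially the paper's argument written out in martingale form: since $E_{n+1}u-E_nu=\sum_{k,e}\langle u,h_{n,k,e}\rangle h_{n,k,e}$, your increment $\omega_{n+1}(u)-\omega_n(u)$ is exactly the generation-$n$ block $\sum_{k,e}\omega(h_{n,k,e})\,T_{h_{n,k,e}}(u)$ of the Faber--Schauder series~\eqref{eq:fFSd}, your cube-wise Poincar\'e estimate is what underlies~\eqref{eq:fnkmaxDimD} together with $|\omega(h_{n,k,e})|\lesssim 2^{n(1-\gamma-d/2)}$, and your choice $2^N\approx\|Du\|(\R^d)/\|u\|_1$ is the interpolation step the paper cites. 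The one loose spot is the boundary faces in your bound $\|DE_nu\|(\R^d)\le C\|Du\|(\R^d)$: as written, an upper bound on the trace of $u$ in terms of $\int_K|u|$ does not control the jump $|u_K|$ across a face on $\partial[0,1]^d$ (you would need it for $u-u_K$, with factor $2^n$ rather than $2^{n(d-1)}$), but the issue disappears if you view $E_nu$ as the dyadic average over the whole grid of $\R^d$, where cubes outside $[0,1]^d$ have average $0$, so your adjacent-pair Poincar\'e argument covers these faces verbatim with the jump across $\partial[0,1]^d$ counted in $\|Du\|(\R^d)$ --- alternatively, uniqueness follows at once from Theorem~\ref{thm:faberCH}, since a $\gamma$-fractional charge lies in $CH([0,1]^d)$ and its Faber--Schauder coefficients are determined by its values on dyadic cubes.
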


A finite union of dyadic cubes is called a \emph{dyadic figure}. By additivity, any set function
$\omega$ satisfying the hypotheses of Lemma~\ref{lem:37} extends canonically to the class of dyadic figures.
A theorem of De Giorgi provides the following approximation result: every $BV$-set
$B \subset [0,1]^d$ admits a sequence $(F_n)$ of dyadic figures such that
\[
|B \ominus F_n| \to 0 \quad\text{and}\quad \sup_n \|F_n\| \le C \|B\|.
\]
The natural question, then, is whether the Hölder control in~(2) is sufficient to extend
$\omega$ by density to $\niceBV([0,1]^d)$.

\begin{proof}[Idea of proof]
   We write the formal Faber-Schauder decomposition
   \begin{equation}
   \label{eq:fFSd}
   \omega =  \omega(h_{-1}) T_{h_{-1}} + \sum_{n=0}^\infty \sum_{k=0}^{2^{nd}-1} \sum_{e \in E} \omega(h_{n,k,e}) T_{h_{n,k,e}}
   \end{equation}
   as in Theorem~\ref{thm:faberCH}. It is possible to make sense of the coefficients $\omega(h_{-1})$ and $\omega(h_{n,k,e})$, because the Haar functions are piecewise constant on dyadic cubes. Moreover, the Hölder control yields estimate of the coefficients $|\omega(h_{n,k,e})|$. Using~\eqref{eq:fnkmaxDimD}, we then show that the series in~\eqref{eq:fFSd} converges in $CH([0, 1]^d)$. Consequently, the left-hand side of~\eqref{eq:fFSd} is well-defined as an element of $CH([0, 1]^d)$ and provides the desired extension. Proving that $\omega$ is actually $\gamma$-fractional requires an interpolation argument that is to be found in~\cite[Theorem~3]{Bouafia2025Young}.
\end{proof}

It is also possible to prove that $CH^\gamma([0, 1]^d)$ consists exactly of those strong charges $\omega$ whose Faber-Schauder coefficients have the following decay:
\[
|\omega(h_{n,k,e})| = O \left( 2^{n\left( 1 - \gamma - \frac{d}{2} \right)}\right).
\]
Therefore, the space $CH^\gamma([0,1]^d)$ is isomorphic to $\ell^\infty$,
in the same way that $C^\gamma([0,1])$ is related to $\ell^\infty$ via the Ciesielski
isomorphism. In particular, $CH^\gamma([0,1]^d)$ is a dual Banach space (indeed, a bidual, see~\cite[Proposition~7.2]{Boua2}).

Moreover, just as the unit ball of $C^\gamma([0,1])$ is compact in $C([0,1])$ by the
Arzelà-Ascoli theorem (which reflects the weak* topology), the same holds for the
unit ball of $CH^\gamma([0,1]^d)$ in $CH([0,1]^d)$. We now formalize this idea.
The following notion of weak* convergence will be useful in stating the continuity
properties of the Young integral.
\begin{defn}
\label{def:weak*}
A sequence $(\omega_n)$ in $CH^\gamma([0,1]^d)$ is said to converge \emph{weakly*} to
$\omega \in CH^\gamma([0,1]^d)$ if $(\omega_n)$ is bounded in
$CH^\gamma([0,1]^d)$ and $\omega_n \to \omega$ in $CH([0,1]^d)$.
\end{defn}
Finally, we briefly discuss the characterization of $\gamma$-fractional charges. 
It was shown in~\cite[Theorem~11]{Bouafia2025Young} that they are precisely 
the divergences of $\gamma$-Hölder continuous vector fields. More specifically, 
the operator $\diver$ defined in~\eqref{eq:div} induces a surjective map
\[
C^\gamma([0,1]^d;\mathbb{R}^d) \longrightarrow CH^\gamma([0,1]^d).
\]
Consequently, $\gamma$-fractional charges can be interpreted as distributions of negative Hölder exponent $\gamma-1$. Nevertheless, we retain the term \emph{$\gamma$-fractional charges} for historical reasons related to the Pfeffer integral and because, as we shall see in Section~\ref{sec:git}, these objects generalize naturally to any codimension, where they appear to have no analogue in any standard function space.

\subsection{Increments of stochastic processes over $BV$-sets}
\label{subsec:incr} Let $f \colon [0,1]^d \to \mathbb{R}$ be any function. When studying multipliers for the Henstock integral, 
we introduced the notion of the \emph{rectangular increment} of $f$ over a rectangle 
$I = \prod_{i=1}^d [a_i, b_i]$:
\[
\Delta_f(I) = \sum_{(c_i) \in \prod_{i=1}^d \{a_i, b_i\}} 
(-1)^{\delta_{a_1, c_1}} \cdots (-1)^{\delta_{a_d, c_d}} f(c_1, \dots, c_d).
\]  
The set function $\Delta_f$ extends by additivity to a broader class of sets, namely \emph{rectangular figures}, 
\ie{}, finite unions of rectangles. In many situations, one wishes to integrate a function with respect to the 
increments of $f$, as measured by $\Delta_f$. For integrals constructed using partitions of rectangles, 
this object is sufficient. However, one may naturally ask whether $\Delta_f$ can be turned into objects better suited to other integration theories:
\begin{itemize}
\item Can $\Delta_f$ be extended to a Borel measure on $[0,1]^d$?
\item Can $\Delta_f$ be extended to a charge defined on $\niceBV([0,1]^d)$?
\end{itemize}
The first question is generally too strong—particularly if $f$ is a sample path of a typical multiparameter random process. 
We therefore focus on the second question. This is very natural, since charges are the  integrators 
for the Pfeffer-Stieltjes integral.

\begin{defn}
A function $f \colon [0, 1]^d \to \R$ is said to be \emph{chargeable} (resp. \emph{$\gamma$-fractionally chargeable}) if $\Delta_f$ extends to a charge (resp. a $\gamma$-fractional charge).
\end{defn}

In fact, Lemma~\ref{lem:37} may already provide an answer to the second question. It is no coincidence that the proof of Lemma~\ref{lem:37} relies on the Faber-Schauder basis of $CH([0, 1]^d)$: in general, a Schauder basis gives a criterion for membership in a Banach space. By combining Lemma~\ref{lem:37} with a standard Borel-Cantelli argument, one obtains the following Kolmogorov-type chargeability theorem.

\begin{thm}[{\cite[Theorem~9.1]{BouaDePa}}]
\label{thm:kolmo}
Let $X$ be a stochastic process indexed on $[0, 1]^d$, with continuous sample paths. Let $q > 0, C \geq 0, \eta > 0$ such that
        \[
        \frac{d-1}{d} < \frac{\eta}{q} \leq 1 \qquad \text{and} \qquad
        \mathbb{E}\left(|\Delta_X K|^q\right) \leq C |K|^{1 + \eta}
        \]
        for dyadic cubes $K$. Then a.s, $X$ is $\gamma$-fractionally chargeable for any $0 < \gamma < d\eta/q - (d-1)$.
\end{thm}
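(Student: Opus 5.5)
We reduce the statement to Lemma~\ref{lem:37}, applied to the random set function $K \mapsto \Delta_X(K)$ on dyadic cubes. The additivity hypothesis (1) of Lemma~\ref{lem:37} holds surely, for every sample path, because rectangular increments are additive under subdivision of a rectangle into its dyadic children. Only the Hölder control (2) remains: we must show that, almost surely, $|\Delta_X(K)| \leq C(\omega)|K|^\delta$ for all dyadic cubes $K$, with $\delta = (d-1+\gamma)/d$. Lemma~\ref{lem:37} then gives a unique $\gamma$-fractional charge extending $\Delta_X$ from dyadic cubes.

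It remains to check that this charge extends $\Delta_X$ on rectangles as well. I would use continuity of sample paths. Approximate a rectangle $I$ by dyadic figures $F_m$, which are unions of the dyadic cubes of generation $m$ inside $I$, or a slight enlargement of it. The perimeters $\|F_m\|$ stay bounded and $|F_m \ominus I| \to 0$. The continuity of strong charges from Proposition~\ref{prop:chargeCONT} then makes the charge values converge. Meanwhile $\Delta_X(F_m)$ equals the rectangular increment over the rectangle $F_m$, which converges to $\Delta_X(I)$ by continuity of $X$.

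The Hölder control is the main step, and we get it by Borel--Cantelli. Fix $\gamma < d\eta/q-(d-1)$, so that $\delta < \eta/q$. At generation $n$ there are $2^{nd}$ cubes, each with $|K| = 2^{-nd}$. Markov's inequality gives
\[
\mathbb{P}\bigl(|\Delta_X K| > |K|^\delta\bigr) \leq C|K|^{1+\eta-q\delta} = C\,2^{-nd(1+\eta-q\delta)}.
\]
A union bound over the cubes of generation $n$ yields a probability of at most $C\,2^{-nd(\eta-q\delta)}$. This is summable in $n$ because $q\delta<\eta$. By Borel--Cantelli, almost surely there is an $N$ such that $|\Delta_X K|\le |K|^\delta$ for all cubes of generation $n\ge N$. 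The finitely many cubes of lower generations are absorbed into the constant.

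Almost-sure validity for every $\gamma$ in the range simultaneously follows by taking a sequence $\gamma_j$ increasing to $d\eta/q-(d-1)$ and intersecting the countably many full-measure events. This works because $\gamma$-fractional implies $\gamma'$-fractional for $\gamma'<\gamma$: the inequality $\|Du\|^{1-\gamma}\|u\|_1^\gamma\lesssim \|Du\|^{1-\gamma'}\|u\|_1^{\gamma'}$ follows from Poincaré's inequality $\|u\|_1\lesssim\|Du\|(\R^d)$ on the cube. The condition $(d-1)/d<\eta/q$ guarantees that the admissible range of $\gamma$ is nonempty, and $\eta/q\le1$ keeps $\delta \le 1$, that is $\gamma<1$ as required in Lemma~\ref{lem:37}.

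The only delicate point is the identification of the extension with $\Delta_X$ on non-dyadic rectangles. Everything else is a direct combination of Lemma~\ref{lem:37} with the moment bound.
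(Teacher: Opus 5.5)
Your proposal is correct and follows the route the paper indicates: Lemma~\ref{lem:37} applied to $K \mapsto \Delta_X(K)$, with the H\"older control supplied by Markov's inequality, a union bound over the $2^{nd}$ cubes of generation $n$, and Borel--Cantelli, where summability comes from $q\delta<\eta$. Your extra check that the resulting charge agrees with $\Delta_X$ on non-dyadic rectangles, using inner dyadic approximation and continuity of the sample paths, correctly fills in a point the paper leaves implicit.
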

In particular, this theorem applies to the 
\emph{fractional Brownian sheet} $B^H = \{B^H_t : t \in [0,1]^d\}$, 
a centered Gaussian process indexed by $[0,1]^d$, whose regularity is 
determined by the Hurst exponents $H = (H_1, \dots, H_d) \in (0,1)^d$: 
larger values of $H_i$ correspond to smoother behavior in the $i$-th direction. 
The covariance of $B^H$ is given by
\[
\operatorname{Cov}\left(B^H_s, B^H_t\right) = \prod_{i=1}^d \frac12 \left( s_i^{2H_i} + t_i^{2H_i} - |t_i - s_i|^{2H_i} \right)
\]
for $s, t \in [0, 1]^d$. The fractional Brownian sheet generalizes the classical Brownian sheet, 
which corresponds to the case $H = (1/2, \dots, 1/2)$.

The covariance function is designed so that the variance of the rectangular increments 
over any rectangle $I = \prod_{i=1}^d [a_i, b_i]$ has the simple product expression
\[
\mathrm{Var}\big(\Delta_{B^H}(I)\big) = \prod_{i=1}^d |b_i - a_i|^{2H_i}.
\]
One observes that, on average, $\Delta_{B^H}$ behaves like a fractional charge:
$\Delta_{B^H}(I)$ is comparable to $|I|^{\bar H}$ whenever $I$ is a cube, or close
to a cube, where $\bar H$ denotes the mean of the Hurst coefficients. This estimate,
however, deteriorates when $I$ is a thin rectangle. In fact, Theorem~\ref{thm:kolmo}
can be used to establish the positive part of the following result.
\begin{thm}[{\cite[Theorems~10.2 and~10.5]{BouaDePa}}] Recall $\bar{H} = \frac{H_1 + \cdots + H_d}{d}$.
\begin{enumerate}
\item If $\bar{H} > \frac{d-1}{d}$, then a.s. the sample paths  of $B^H$ are $\gamma$-fractionally chargeable for all $\gamma \in (0, H_1 + \cdots + H_d - (d-1))$.
\item If $\bar{H} \leq \frac{d-1}{d}$, then a.s. the sample paths of $B^H$ are not chargeable.
\end{enumerate}
\end{thm}
Altogether, chargeability requires a high degree of regularity, and this requirement becomes increasingly restrictive as the dimension grows. In particular, in dimension $d \geq 2$, the Brownian sheet is not chargeable. This does not contradict the fact that its increments over arbitrary measurable sets can still be defined as a \emph{random measure}; almost sure chargeability is a pathwise property of random processes. This negative result reflects the impossibility of extending the Lévy-Ciesielski construction of Brownian motion to the Brownian sheet within the space $CH([0,1]^d)$.

Also observe that, when viewed as continuous functions, the sample paths of $B^H$ are almost surely
$\gamma$-Hölder continuous for every
$\gamma < \min\{H_1,\dots,H_d\}$. In this setting, the regularity is governed
by the worst Hurst coefficient, in contrast with the previous result. 

When $\bar H > \frac{d-1}{d}$, the fact that $\Delta_{B^H}$ is not only almost surely chargeable, but even $\gamma$-fractionally so, implies that one can define its increments over sets that are more irregular than merely $BV$-sets. As far as the use of the Pfeffer--Stieltjes integral with respect to $\Delta_{B^H}$ is concerned, this question is mostly a matter of curiosity. It will be clear in Example~\ref{ex:fractSobolev} that the natural class of sets to consider consists of those with finite fractional perimeter, \ie{}, sets whose indicator functions belong to an appropriate fractional Sobolev space.

\begin{quest}
  When $\bar{H} > \frac{d-1}{d}$, what is the exact regularity of $\Delta_{B^H}$ in term of the decay of its Faber-Schauder coefficients?
\end{quest}

\subsection{Young integral with respect to a fractional charge}
As mentioned in point~(3) of Subsection~\ref{subsec:1dHFS}, it is entirely possible to define a Young
integral $(\mathrm{Y})\int_\bullet f\,\omega$ by decomposing a Hölder function $f$ in the Haar
basis and a fractional charge in the Faber-Schauder basis. This leads to the following result.

\begin{thm}
\label{thm:young}
Let $\beta, \gamma \in (0, 1)$ such that $\beta + \gamma > 1$.
        There is a unique map
        \[
        (\mathrm{Y}) \int_{\bullet} \colon C^\beta([0, 1]^d) \times CH^\gamma([0, 1]^d) \to CH^\gamma([0, 1]^d)
        \]
        such that
        \begin{enumerate}
            \item $(\mathrm{Y}) \int_\bullet f \, T_g = T_{fg}$ for $f \in C^\beta([0, 1]^d)$ and $g \in L^d([0, 1]^d)$;
            \item \emph{Weak* continuity}: if $(f_n)$ converges weakly* to $f$ in $C^\beta([0, 1]^d)$ and $(\omega_n)$ converges weakly* to $\omega$ then $(\mathrm{Y}) \int_\bullet f_n \, \omega_n \to (\mathrm{Y}) \int_\bullet f \, \omega$ weakly*.
        \end{enumerate}
\end{thm}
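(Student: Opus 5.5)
We will construct the integral dyadically and then extend it, using Lemma~\ref{lem:37} as the main tool. Fix $f \in C^\beta([0,1]^d)$ and $\omega \in CH^\gamma([0,1]^d)$. For a dyadic cube $K$ of generation $m$ and $n \geq m$, form the Riemann--Stieltjes sums
\[
S_n(K) = \sum_{L \subset K,\ L \text{ of generation } n} f(y_L)\,\omega(L),
\]
where $y_L$ is the lower-left corner of $L$. We then pass from $S_n$ to $S_{n+1}$. Since $\omega(L) = \sum_{L'} \omega(L')$ over the children $L'$ of $L$, we get
\[
|S_{n+1}(K) - S_n(K)| \le \sum_{L} \sum_{L'} |f(y_{L'}) - f(y_L)|\,|\omega(L')|.
\]
Each term is at most $C[f]_\beta 2^{-n\beta} \cdot \|\omega\|_{CH^\gamma} 2^{-nd\delta}$, with $\delta = (d-1+\gamma)/d$. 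There are $2^{(n-m)d}2^d$ terms, so the total is $\lesssim 2^{-md}\, 2^{n(d - d\delta - \beta)} = 2^{-md}2^{-n(\beta+\gamma-1)}$. Because $\beta+\gamma>1$, the series converges geometrically. The limit $\alpha(K) = \lim_n S_n(K)$ therefore exists and is additive over children, since each $S_n$ is. It also satisfies
\[
|\alpha(K) - f(y_K)\omega(K)| \lesssim \|f\|_{C^\beta}\|\omega\|_{CH^\gamma} |K|^{(\beta+\gamma-1)/d+1} \le C|K|.
\]
Combined with $|f(y_K)\omega(K)| \le \|f\|_\infty\|\omega\| |K|^\delta$, this gives the Hölder control $|\alpha(K)| \le C\|f\|_{C^\beta}\|\omega\|_{CH^\gamma}|K|^\delta$. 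Lemma~\ref{lem:37} then extends $\alpha$ uniquely to a $\gamma$-fractional charge, which we define to be $(\mathrm{Y})\int_\bullet f\,\omega$.

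For property (1), take $\omega = T_g$ with $g \in L^d$. We first check the claim on dyadic cubes: $S_n(K) = \int_K f_n g$, where $f_n$ is the piecewise constant approximation of $f$, and $f_n \to f$ uniformly, so $\alpha(K) = \int_K fg = T_{fg}(K)$. Both sides are strong charges that agree on dyadic cubes. By additivity they agree on dyadic figures, and by the De Giorgi approximation with bounded perimeters together with Proposition~\ref{prop:chargeCONT}, they agree on all of $\niceBV$ and hence on $BV$. Uniqueness also comes for free: charges $T_g$ with $g$ a finite combination of Haar functions are weak* dense in $CH^\gamma$ (truncate the Faber--Schauder expansion of Theorem~\ref{thm:faberCH}), and similarly smooth functions are weak* dense in $C^\beta$. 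So (1) and (2) determine the map.

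For weak* continuity (2), let $f_n \to f$ uniformly with bounded $C^\beta$ norms, and $\omega_n \to \omega$ in $CH$ with bounded $CH^\gamma$ norms. The outputs $\alpha_n$ are bounded in $CH^\gamma$ by the estimate above, so it suffices to show $\alpha_n \to \alpha$ in $CH([0,1]^d)$. This step is the main obstacle, because convergence in $CH$ is not the same as convergence on cubes. The plan is to use an interpolation: on a $CH^\gamma$-bounded set, the $CH$ norm is controlled by a weaker quantity, namely some $CH^{\gamma'}$ norm with $\gamma' < \gamma$ and $\beta + \gamma' > 1$. Alternatively one can use the fact, from the $\ell^\infty$ description of the Faber--Schauder coefficients, that the unit ball of $CH^\gamma$ is compact in $CH$. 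By this compactness it is enough to identify limits on dyadic cubes, that is, to show $\alpha_n(K) \to \alpha(K)$ for each fixed $K$. This follows from splitting
\[
\alpha_n(K) - \alpha(K) = \bigl(\alpha_n(K) - S_N^{(n)}(K)\bigr) + \bigl(S_N^{(n)}(K) - S_N(K)\bigr) + \bigl(S_N(K) - \alpha(K)\bigr).
\]
The two tails are uniformly small in $n$ by the geometric estimate. The middle finite sum converges as $n \to \infty$, since $f_n \to f$ pointwise and $\omega_n(L) \to \omega(L)$ for each cube $L$.
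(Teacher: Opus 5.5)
Your proposal is correct and follows the paper's sewing-lemma construction with Lemma~\ref{lem:sewing} inlined. The dyadic Riemann--Stieltjes sums $f(x_K)\omega(K)$ have an additivity defect of order $|K|^{1+(\beta+\gamma-1)/d}$, their limit is additive with H\"older control, and Lemma~\ref{lem:37} extends it to a $\gamma$-fractional charge. Your arguments for (1), uniqueness, and weak* continuity also go through; they supply details the paper leaves unproved, but use two standard facts you should state: the quantitative form of Lemma~\ref{lem:37} (the $CH^\gamma$ norm of the extension is bounded by a multiple of $C$, e.g.\ by the closed graph theorem), and the uniform $CH^\gamma$-boundedness of Faber--Schauder truncations, which follows from the $\ell^\infty$ description of the coefficients.
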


Weak* convergence in $C^\beta([0,1]^d)$ means that $f_n \to f$ uniformly on
$[0,1]^d$ and that the sequence $(f_n)$ is bounded in $C^\gamma([0,1]^d)$. The notion
of weak* convergence in $CH^\gamma([0,1]^d)$ was introduced in
Definition~\ref{def:weak*}.

We shall provide an alternative proof of Theorem~\ref{thm:young} using a sewing lemma, which
aligns more closely with the modern approach in rough path theory. In general, a
sewing lemma is a tool that allows one to turn an almost additive object into a truly
additive one. While its statement is often simple, its consequences are nonetheless
powerful. In the context of charges, we can formulate the following version of the
sewing lemma.

\begin{lem}
\label{lem:sewing}
Let $\eta \colon \{\text{dyadic cubes} \subset [0, 1]^d\} \to \R$ be a function. Suppose $\varepsilon > 0$, $C \geq 0$ such that
    \begin{enumerate}
        \item \emph{Almost finite additivity}: for every dyadic cube $K$,
        \[
        \left| \eta(K) - \sum_{L \text{ children of }K} \eta(L) \right| \leq C |K|^{1 + \varepsilon}
        \]
        \item \emph{Hölder control}: for every dyadic cube $K$, one has
        \[
        |\eta(K)| \leq C |K|^\delta
        \]
    \end{enumerate}
    Then there is a unique $\omega \in CH^\gamma([0, 1]^d)$ such that $|\omega(K) - \eta(K)| \leq \kappa C |K|^{1 + \varepsilon}$ (for some $\kappa = \kappa(d)$).
\end{lem}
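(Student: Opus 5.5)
The plan is to construct $\omega$ on dyadic cubes as the limit of refined sums and then invoke Lemma~\ref{lem:37}. For a dyadic cube $K$ of generation $m$ and $j \geq 0$, I set
\[
\omega_j(K) = \sum_{L \subset K,\ L \text{ of generation } m+j} \eta(L).
\]
Apply almost additivity to each cube $L$ of generation $m+j$ inside $K$. There are $2^{jd}$ such cubes, each with $|L| = 2^{-jd}|K|$. This gives
\[
|\omega_{j+1}(K) - \omega_j(K)| \leq C\, 2^{jd} (2^{-jd}|K|)^{1+\varepsilon} = C\, 2^{-jd\varepsilon} |K|^{1+\varepsilon}.
\]
The sequence is therefore Cauchy. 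Its limit $\omega(K) = \lim_j \omega_j(K)$ satisfies $|\omega(K) - \eta(K)| \leq \kappa C |K|^{1+\varepsilon}$ with $\kappa = \sum_{j \geq 0} 2^{-jd\varepsilon} = (1 - 2^{-d\varepsilon})^{-1}$.

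Next I check the two hypotheses of Lemma~\ref{lem:37}. Additivity over children is exact. The $(j+1)$-th sum over $K$ equals the sum over the children $L$ of the $j$-th sums over $L$, so letting $j \to \infty$ gives $\omega(K) = \sum_L \omega(L)$. For the Hölder control, note that $\delta = (d-1+\gamma)/d < 1 < 1+\varepsilon$ and $|K| \leq 1$. Hence
\[
|\omega(K)| \leq |\eta(K)| + \kappa C |K|^{1+\varepsilon} \leq (1+\kappa) C |K|^\delta.
\]
Lemma~\ref{lem:37} then extends $\omega$ uniquely to an element of $CH^\gamma([0,1]^d)$.

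For uniqueness, suppose $\omega, \omega' \in CH^\gamma$ both satisfy the estimate, and set $\theta = \omega - \omega'$. Then $\theta$ is additive on dyadic cubes with $|\theta(K)| \leq 2\kappa C |K|^{1+\varepsilon}$. Summing over the $2^{jd}$ subcubes of generation $m+j$ in $K$ gives $|\theta(K)| \leq 2\kappa C\, 2^{-jd\varepsilon} |K|^{1+\varepsilon} \to 0$, so $\theta$ vanishes on all dyadic cubes. It remains to pass from cubes to all of $BV([0,1]^d)$. The uniqueness clause of Lemma~\ref{lem:37}, applied to the zero set function, gives $\theta = 0$. Alternatively, $\theta$ vanishes on dyadic figures and hence on every Haar function, so all its Faber-Schauder coefficients vanish and $\theta = 0$ by Theorem~\ref{thm:faberCH}.

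The main difficulty, namely producing a genuine fractional charge from data on dyadic cubes, is entirely absorbed by Lemma~\ref{lem:37}. What remains is the geometric-series estimate, which works precisely because the exponent $1+\varepsilon$ exceeds $1$: the $2^{jd}$ error terms at level $j$ are summable.
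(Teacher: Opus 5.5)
Your proof is correct and follows the paper's own sketch: you define $\omega(K)$ as the limit of the refined dyadic sums of $\eta$, get the Cauchy property from the almost additivity~(1) (the sketch's ``thanks to~(2)'' should read~(1)), and pass the resulting exactly additive, H\"older-controlled set function to Lemma~\ref{lem:37}, adding an explicit uniqueness argument that the sketch omits. One remark: your $\kappa=(1-2^{-d\varepsilon})^{-1}$ depends on $\varepsilon$ as well as on $d$, and this dependence is unavoidable, since $\eta(K)=A\,(|K|^{1+\varepsilon}-|K|)$ satisfies both hypotheses with $C$ comparable to $A(1-2^{-d\varepsilon})$ (up to constants depending on $d,\gamma$), while its sewing is $\omega(K)=-A|K|$, which forces $\kappa\geq c(d,\gamma)\,\varepsilon^{-1}$; so the statement's ``$\kappa=\kappa(d)$'' should be read as $\kappa(d,\varepsilon)$.
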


\begin{proof}[Sketch of proof]
 For each dyadic cube $K$, we define
 \[
 \omega(K) = \lim_{n \to \infty} \sum_{\substack{L \text{ grandchildren of } K \\ \text{ of generation }n}} \eta(L)
 \]
 One can verify that the limit on the right-hand side exists as the limit of a Cauchy
sequence, thanks to~(2). It is then possible to apply Lemma~\ref{lem:37} to extend $\omega$ to a $\gamma$-fractional charge. Note however that the proof of Lemma~\ref{lem:sewing} ultimately relies on the properties of the Faber-Schauder basis of $CH([0, 1]^d)$.
\end{proof}

Now, the Young integral in Theorem~\ref{thm:young} can be constructed as follows: pick for any dyadic cube $K$ of $[0, 1]^d$ a point $x_K \in K$, and define the set function $\eta \colon K \mapsto f(x_K) \omega(K)$. Set $\delta = \frac{d-1 + \gamma}{d}$ the Hölder exponent associated to the fractional exponent $\gamma$. Then the set function $\eta$ is almost additive, as
\begin{align*}
\left|f(x_K) \omega(K) - \sum_{L \text{ children of }K} f(x_L) \omega(L)\right| & \leq \sum_{L \text{ children of }K} |f(x_K) - f(x_L)| \, |\omega(L)| \\
& \leq C (\diam K)^\beta |K|^\delta \\
& \leq C |K|^{\frac{d-1 + \beta + \gamma}{d}}
\end{align*}
Lemma~\ref{lem:sewing} also provides a Young-Loeve-type estimate of the error
\[
\left| (\mathrm{Y}) \int_K f \, \omega - f(x_K) \omega(K) \right| \leq C |K|^\delta.
\]
It is natural to ask whether such an estimate holds over sets more general than dyadic cubes. With a bit of extra work, one can prove the following result:
\begin{equation}
\label{eq:youngloeve}
\left| (\mathrm{Y}) \int_B f \,\omega - f(x) \,\omega(B) \right|
\le \frac{C [f]_{C^\beta} \|\omega\|_{CH^\gamma}}{(\operatorname{isop} B)^{1 - \gamma}}
|B|^\delta (\operatorname{diam} B)^\beta,
\end{equation}
for any $BV$-set $B$ and any $x$ in the essential closure of $B$. For the error in~\eqref{eq:youngloeve} to be of order strictly greater than $1$ in $|B|$, the regularity 
\[
\reg B = \frac{|B|}{\|B\| \diam B}
\]
must not be too small. Recall that this quantity was introduced in the definition of the Pfeffer integral to ensure a divergence theorem. Interestingly, it also appears in the seemingly unrelated context of Young integration. In fact, using~\eqref{eq:youngloeve}, one easily proves:
\begin{prop}
Under the hypotheses of Theorem~\ref{thm:young}, the function $f$ is Pfeffer-Stieltjes integrable with respect to $\omega$, and the Pfeffer-Stieltjes and Young indefinite integrals coincide.
\end{prop}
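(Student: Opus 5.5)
We take as candidate indefinite integral the charge $\alpha = (\mathrm{Y})\int_\bullet f\,\omega$ from Theorem~\ref{thm:young}. It lies in $CH^\gamma([0,1]^d)$. A $\gamma$-fractional charge is a strong charge by Proposition~\ref{prop:chargeCONT}, and a strong charge is a charge in the sense of Definition~\ref{def:charge}, since $\diver v$ is one. So $\alpha$ is admissible in Definition~\ref{def:pfefS}. We then check the Saks--Henstock condition directly with a \emph{constant} gauge. Fix $\eta>0$ and $\varepsilon>0$, and let $\scrD$ be an $\eta$-regular tagged division. For each $(B,x)\in\scrD$ the tag $x$ is in the essential closure of $B$, so the Young--Loève estimate~\eqref{eq:youngloeve} applies:
\[
|f(x)\omega(B)-\alpha(B)| \le C[f]_{C^\beta}\|\omega\|_{CH^\gamma}\,(\operatorname{isop}B)^{\gamma-1}|B|^\delta(\diam B)^\beta .
\]

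The main step is to turn the right-hand side into $c(\eta)\,|B|\,(\diam B)^{\beta+\gamma-1}$. Regularity gives $\|B\|\le |B|/(\eta\diam B)$. Hence
\[
(\operatorname{isop}B)^{\gamma-1}|B|^\delta=\|B\|^{1-\gamma}|B|^{\gamma}\le \eta^{\gamma-1}|B|(\diam B)^{\gamma-1}.
\]
Here we used $|B|^{\delta}=|B|^{(d-1)/d}|B|^{\gamma/d}$ together with $\|B\|^{1-\gamma}|B|^{-(d-1)(1-\gamma)/d}$, which simplify to exactly $\|B\|^{1-\gamma}|B|^\gamma$; the last product is the original $\gamma$-fractionality bound before it was rewritten with $\operatorname{isop}$. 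Thus
\[
|f(x)\omega(B)-\alpha(B)|\le C\eta^{\gamma-1}[f]_{C^\beta}\|\omega\|_{CH^\gamma}\,|B|\,(\diam B)^{\beta+\gamma-1}.
\]
Since $\beta+\gamma>1$, the exponent of $\diam B$ is positive. Choose the constant gauge $\delta\equiv r$ with $C\eta^{\gamma-1}[f]_{C^\beta}\|\omega\|_{CH^\gamma}r^{\beta+\gamma-1}\le\varepsilon$. The sets of a division are non-overlapping in $[0,1]^d$, so $\sum_{(B,x)\in\scrD}|B|\le 1$. Summing over $\scrD$ then gives a total error at most $\varepsilon$, which is the condition of Definition~\ref{def:pfefS}. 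It follows that $f$ is Pfeffer--Stieltjes integrable with respect to $\omega$, with indefinite integral $\alpha$.

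It remains to check that the Pfeffer--Stieltjes indefinite integral is unique, so that it must coincide with $\alpha$. Suppose two charges $\alpha,\alpha'$ both satisfy the condition. Then $\sum_{\scrD}|\alpha(B)-\alpha'(B)|\le 2\varepsilon$ for every fine regular division. We would use Howard's Cousin lemma, quoted after Definition~\ref{def:pfef}. It provides, inside any fixed $BV$-set $A$ (for instance a dyadic cube), $\delta$-fine $\eta$-regular divisions that almost fill $A$, with the residual set small in measure and of controlled perimeter. The continuity property of the charge $\alpha-\alpha'$ then gives $(\alpha-\alpha')(A)=0$. By the uniqueness in Lemma~\ref{lem:37}, vanishing on dyadic cubes forces $\alpha=\alpha'$.

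The only substantive ingredient is~\eqref{eq:youngloeve}, which the paper states just above. The main obstacle, if it had to be proved here, would be extending the Young--Loève bound from dyadic cubes to arbitrary $BV$-sets. We do not expect to reprove it, and the computation above, which converts the isoperimetric factor into the regularity factor, is routine.
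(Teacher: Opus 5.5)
Your proposal is correct and is essentially the paper's argument: the paper derives the proposition directly from the Young--Lo\`eve estimate~\eqref{eq:youngloeve}, and your reduction $\|B\|^{1-\gamma}|B|^{\gamma}\le \eta^{\gamma-1}|B|(\diam B)^{\gamma-1}$ under $\reg B>\eta$, followed by a constant gauge, is the intended ``easy'' step. One small slip is in your uniqueness paragraph: a competing indefinite integral $\alpha'$ is only a charge, not necessarily $\gamma$-fractional, so Lemma~\ref{lem:37} does not apply to $\alpha-\alpha'$; instead, pass from $(\alpha-\alpha')(K)=0$ on dyadic cubes to all $BV$-sets using De Giorgi's approximation by dyadic figures with bounded perimeters and the continuity of charges (the paper itself simply takes uniqueness from Howard's lemma).
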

Note that $\reg B$ provides a stronger control than $\operatorname{isop} B$ defined in~\ref{eq:isop}: for example, the union of two distant balls may have a large isoperimetric coefficient but low $\reg$ coefficient.

%% file: geomint.tex
\section{Geometric integration theories}
\label{sec:git}

A geometric integration theory views integration as a duality between geometric domains and algebraic data. On one side are chains, representing generalized oriented objects such as manifolds, possibly with singularities; on the other are cochains, which assign numerical values to these objects. Integration is precisely the evaluation of a cochain on a chain. The boundary operator on chains and the exterior differentiation (or coboundary) operator on cochains are related by a Stokes-type identity. Different theories arise by choosing which chains and cochains are allowed, with the aim of extending integration beyond smooth settings. In this section, we present three of them:
\begin{itemize}
  \item Whitney's theory of flat chains and flat cochains, which in some sense generalizes the Lebesgue integral to higher codimension.
  \item Normal chains and charges in middle dimension, which take an essentially opposite viewpoint; they were meant to provide a functional-analytic setting for non-absolute integration in arbitrary codimension.
  \item Fractional currents and fractional charges, which interpolate between the two preceding theories and generalize Young integration to any codimension.
\end{itemize}

\subsection{Whitney's theory of flat chains and flat cochains}
\label{subsec:whitney}
All the results presented here can be found in~\cite{Whit}.
We begin by defining polyhedral chains, and then construct the space of flat chains as their completion with respect to a suitable norm.

A non-empty subset $\sigma \subset [0, 1]^d$ is called a \emph{cell} if it can be written as a finite intersection of closed half planes in $\R^d$. Its \emph{affine hull}, $\langle \sigma \rangle$, is the minimal affine subspace of $\R^d$ containing $\sigma$, and the dimension of $\sigma$ is that of its affine hull. We say that $\sigma$ is an $m$-cell if it has dimension $m$. An \emph{oriented $m$-cell} is an $m$-cell $\sigma$, together with a choice of orientation of $\langle \sigma \rangle$ (if $m=0$, orienting $\sigma$ simply amounts to choosing $+1$ or $-1$). We typically denote an oriented cell $\llbracket \sigma \rrbracket$, leaving the orientation implicit in the notation. For $0$-cells, that are just singletons, we write $\llbracket x \rrbracket$ instead of $\llbracket \{ x \} \rrbracket$ for the cell $\{x\}$ with the positive orientation.

The space of \emph{polyhedral $m$-chains} in $[0, 1]^d$ is the real linear space spanned by oriented $m$-cells, in the sense that each polyhedral $m$-chain admits a decomposition
\begin{equation}
\label{eq:polyhedralChain}
T = \sum_{i \in I} a_i \, \llbracket \sigma_i \rrbracket,
\end{equation}
where $I$ is finite, $a_i \in \mathbb{R}$ and $\llbracket \sigma_i \rrbracket$ are oriented $m$-cells, modulo the following identifications:
\begin{enumerate}
    \item $\llbracket \sigma \rrbracket = -\, \llbracket \sigma' \rrbracket$ if $\llbracket \sigma \rrbracket$ and $\llbracket \sigma' \rrbracket$ are the same cell with opposite orientations.
    \item $\llbracket \sigma \rrbracket = \llbracket \sigma_1
      \rrbracket + \llbracket \sigma_2 \rrbracket$ if $\llbracket
      \sigma \rrbracket$ is obtained by gluing $\llbracket \sigma_1
      \rrbracket$ and $\llbracket \sigma_2 \rrbracket$. This means that
      $\sigma = \sigma_1 \cup \sigma_2$ (this alone guarantees that
      $\langle \sigma \rangle = \langle \sigma_1 \rangle = \langle
      \sigma_2 \rangle$), where $\sigma_1 \cap \sigma_2$ is a common
      face of $\sigma_1$ and $\sigma_2$ and the orientations of
      $\llbracket\sigma\rrbracket$, $\llbracket\sigma_1\rrbracket$
      and $\llbracket\sigma_2\rrbracket$ are identical.
\end{enumerate}
The space of polyhedral \(m\)-chains is denoted by
\(\bP_m([0,1]^d)\). We adopt the convention that
\(\bP_m([0,1]^d)=0\) if \(m \notin \{0,1,\dots,d\}\).
Each polyhedral \(m\)-chain admits a representation of the
form~\eqref{eq:polyhedralChain}, where for any distinct \(i,j\),
either \(\sigma_i \cap \sigma_j = \emptyset\) or
\(\sigma_i \cap \sigma_j\) is a common face of \(\sigma_i\) and
\(\sigma_j\), of dimension between \(0\) and \(m-1\). The \emph{mass} of $T$ is then
\[
\bM(T) = \sum_{i \in I} |a_i| \scrH^{m}(\sigma_i).
\]
It is not hard to check that $\bM$ is a norm on $\bP_m([0, 1]^d)$.

If \(m \in \{1, \dots, d\}\), each \(m\)-cell has finitely many
\((m-1)\)-dimensional faces. An orientation of the cell induces
orientations on its faces, which gives rise to a boundary operator
\[
\partial \colon \bP_{m}([0, 1]^d) \to \bP_{m-1}([0, 1]^d).
\]
We also set \(\partial = 0\) if \(m \notin \{1,\dots,d\}\).
One checks that, in general, \(\partial \circ \partial = 0\).
However, the boundary operator is not \(\bM\)-continuous.
We therefore introduce the largest norms \(\bF\) on each space
\(\bP_m([0,1]^d)\) that satisfy the following axioms:
\begin{enumerate}
\item \emph{$\bF$-convergence is weaker than mass-convergence}: $\bF \leq \bM$;
\item \emph{$\partial$ is $\bF$-continuous}: $\bF(\partial T) \leq \bF(T)$ for all $T$.
\end{enumerate}
Those two axioms imply that, for any decomposition $T = R + \partial S$, one has:
\[\bF(T) \leq \bF(R) + \bF(\partial S) \leq \bF(R) + \bF(S) \leq \bM(R) + \bM(S).
\]
Thus, a candidate for the $\bF$-norm is
\[
\bF(T)
= \inf \left\{ \bM(R) + \bM(S) :
R \in \bP_m([0,1]^d),\;
S \in \bP_{m+1}([0,1]^d),\;
T = R + \partial S \right\}.
\]
This turns out to be the right choice, as (1) and (2) are easily established. The only difficult step is to prove $\bF(T) = 0 \implies T = 0$, which is carried out in~\cite[Chapter V, \S 12]{Whit}. The norm \(\bF\) is called the \emph{flat norm}. The following example shows that the flat norm encodes some sort of weak convergence, and why axiom~(1) is essential.

\begin{example}
  Consider the sequence of polyhedral $1$-chains $(T_n)$ defined by 
  \[
  T_n = \llbracket (0, 1/n), (1, 1/n) \rrbracket
  \]
  for all $n \geq 1$ (\ie{} $T_n$ is the oriented segment from $(0, 1/n)$ to $(1, 1/n)$). Calling $T = \llbracket (0, 0), (1, 0) \rrbracket$, we have
  \[
  T - T_n = R_n + \partial \llbracket S_n \rrbracket
  \] 
  where $S_n$ is the rectangle with vertices $(0, 0)$, $(1, 0)$, $(1, 1/n)$ and $(0, 1/n)$, oriented canonically, and
  \[
  R_n = \llbracket (1, 0), (1, 1/n) \rrbracket - \llbracket (0, 0), (0, 1/n) \rrbracket.
  \]
  Therefore, $\bF(T - T_n) \leq \bM(R_n) + \bM(S_n) = 2/n + 1/n \to 0$. Of course, $(T_n)$ does not tend to $T$ in mass norm.
\end{example}
\begin{defn}
\label{def:flat}
  The space of \emph{flat $m$-chains} is the completion of $\bP_m([0, 1]^d)$ under the flat norm $\bF$. It is denoted $\bF_m([0, 1]^d)$. We define the space of \emph{flat $m$-cochains} as the dual $\bF^m([0, 1]^d) = \bF_m([0, 1]^d)^*$.
\end{defn}
The boundary operator extends to a continuous linear map
\[
\partial \colon \bF_m([0, 1]^d) \to \bF_{m-1}([0, 1]^d).
\]
By taking the adjoint map, we define the \emph{coboundary} operator, or \emph{exterior derivative} operator
\[
d \colon \bF^{m-1}([0, 1]^d) \to \bF^m([0, 1]^d).
\]
It is also possible to extend the mass norm $\bM$ to $\bF_m([0, 1]^d)$ by relaxation
\[
\bM(T) = \inf_{(T_n)} \liminf_{n \to \infty} \bM(T_n) \in [0, \infty] \qquad \text{for } T \in \bF_m([0, 1]^d),
\]
where the infimum is taken over all sequences $(T_n)$ in $\bP_m([0, 1]^d)$ that converge to $T$ in flat norm,
and it can be proven that 
\[
\bF(T)
= \inf \left\{ \bM(R) + \bM(S) :
R \in \bF_m([0,1]^d),\;
S \in \bF_{m+1}([0,1]^d),\;
T = R + \partial S \right\}
\]
for all $T \in \bF_m([0, 1]^d)$.

To gain some intuition, we consider the two extreme cases $m = d$ and $m = 0$.

\begin{example}[$m = d$]
\label{ex:flatcodim0}
  In codimension $0$, namely for $d$-chains in $[0,1]^d$, flat chains admit a simple description.
A polyhedral \(d\)-chain 
\[
T = \sum_{i \in I} a_i \llbracket \sigma_i \rrbracket
\]
can be identified with the simple function
$f_T \colon [0,1]^d \to \R$, where
\[
f_T = \sum_{i \in I} \pm a_i \ind_{\sigma_i} \text{ a.e.}
\]
where we choose $+ a_i$ if $\llbracket \sigma_i \rrbracket$ is oriented canonically, and $-a_i$ otherwise.
With this identification, the mass of \(T\) satisfies
\[
\bM(T)
= \int_{[0,1]^d} |f_T(x)|\,dx
= \|f_T\|_{L^1}.
\]
Since there are no polyhedral chains of dimension \(d+1\),
the flat norm reduces to $\bF(T) = \bM(T)$
for all polyhedral \(d\)-chains \(T\).
Therefore, completing \(\bP_d([0,1]^d)\) with respect to
the flat norm amounts to completing the space of
functions that are a.e. piecewise constant on $d$-cells with respect to the \(L^1\) norm.
Consequently, every flat $d$-chain  can be represented by a function in
\(L^1([0,1]^d)\), and this representation is unique up to
equality almost everywhere. Thus we have isometric isomorphisms $\bF_d([0, 1]^d) \cong L^1([0, 1]^d)$ and $\bF^d([0, 1]^d) \cong L^\infty([0, 1]^d)$.
\end{example}

\begin{example}[$m = 0$]
  Each cochain $\omega \in \bF^0([0, 1]^d)$ determines a function $\Gamma(\omega) \colon [0, 1]^d \to \R$ defined by $\Gamma(\omega)(x) = \omega(\llbracket x \rrbracket)$ for all $x \in [0, 1]^d$. This function is Lipschitz continuous; indeed, for all $x, y \in [0, 1]^d$, we have:
  \begin{align*}
  |\Gamma(\omega)(y) - \Gamma(\omega)(x)| & = |\omega(\llbracket y \rrbracket - \llbracket x \rrbracket)| \\
  & = |\omega(\partial \llbracket x, y \rrbracket)| \\
  & \leq \|\omega\|_{\bF^0} \bF(\partial \llbracket x, y \rrbracket) \\
  & \leq \|\omega \|_{\bF^0} \bM(\llbracket x, y \rrbracket) \\
  & = \|\omega \|_{\bF^0} |y - x|.
  \end{align*}
  Conversely, if $f \in \rmLip([0, 1]^d)$, it determines a flat $0$-cochain $\Lambda(f)$. We first define $\Lambda(f)$ on the space of polyhedral $0$-chains $\bP_0([0, 1]^d)$ by setting
  \[
  \Lambda(f)(T) = \sum_{i \in I} a_i f(x_i) \quad \text{for } T = \sum_{i \in I} a_i \llbracket x_i \rrbracket \in \bP_0([0, 1]^d).
  \]
  It is clear that $|\Lambda(f)(T)| \leq \|f\|_\infty \sum_{i \in I} |a_i| = \|f\|_\infty \bM(T)$. Furthermore, for any polyhedral $1$-chain $S = \sum_{i \in I} a_i \llbracket x_i, y_i \rrbracket$, we have
  \[
  |\Lambda(f)(\partial S)| = \left| \sum_{i \in I} a_i (f(y_i) - f(x_i)) \right| \leq \rmLip f \sum_{i \in I} |a_i| |y_i - x_i| = \rmLip f \bM(S).
  \]
  Thus, for any decomposition $T = R + \partial S$, one has
  \[
  |\Lambda(f)(T)| \leq |\Lambda(f)(R)| + |\Lambda(f)(\partial S)| \leq \max\{ \|f\|_\infty, \rmLip f\} \left(\bM(R) + \bM(S) \right).
  \]
  By passing to the infimum over all such decompositions, one obtains $|\Lambda(f)(T)| \leq \max\{ \|f\|_\infty, \rmLip f\} \bF(T)$. Consequently, $\Lambda(f)$ extends by density to a flat $0$-cochain on the whole of $\bF_0([0,1]^d)$. This proves that the spaces $\bF^0([0, 1]^d)$ and $\rmLip([0, 1]^d)$ are isometrically isomorphic.
  
  This in turn provides a description of flat $0$-chains. We can identify a polyhedral $0$-chain $T = \sum_{i \in I} a_i \llbracket x_i \rrbracket$ with the atomic measure $\mu_T = \sum_{i \in I} a_i \delta_{x_i}$. The flat distance between two polyhedral $0$-chains $S$ and $T$ is then precisely the Kantorovich-Rubinstein distance:
  \[
  \bF(T - S) = \sup \left\{ \int_{[0, 1]^d} f \, d(\mu_T - \mu_S) : f \in \rmLip([0, 1]^d) \text{ and } \max \{\|f\|_\infty, \rmLip f \} \leq 1 \right\}.
  \]
  The space $\bF_0([0, 1]^d)$ is also the completion of the space of signed Borel measures $\scrM([0, 1]^d)$ under the flat metric. It is related to the Lipschitz-free space over $[0, 1]^d)$, a canonical predual of $\rmLip_0([0, 1]^d) = \{f \in \rmLip([0, 1]^d) : f(0) = 0\}$, an object that has attracted a lot of attention in the study of Banach spaces and metric geometry, see~\cite{depauw_quantified_2025}.
\end{example}

If \( m \in \{1,\dots,n-1\} \), flat \(m\)-chains can be quite general objects and are often difficult to describe explicitly. For instance, consider a closed Jordan curve in \([0,1]^2\) enclosing a domain \(\Omega\). This curve can be represented as the \(1\)-chain $\partial \llbracket \ind_{\Omega} \rrbracket$ (where $\llbracket \ind_\Omega \rrbracket$ denotes the flat $2$-chain associated to the $L^1$ indicator function $\ind_\Omega$).
However, a continuous oriented curve need not define a flat chain. In particular, flat chains cannot, in general, be pushed forward by arbitrary continuous maps; the push-forward is well defined only for Lipschitz maps.

There is a representation theorem due to Wolfe for flat cochains, which is consistent with the previous examples. A \emph{flat $m$-form} is a $L^\infty$ differential $m$-form $\omega \in L^\infty([0, 1]^d ; \wedgeop^m \R^d)$ that has an $L^\infty$ distributional exterior derivative $d\omega \in L^\infty([0, 1]^d  ; \wedgeop^{m+1} \R^d)$. The latter condition means that:
\[
\int_{[0, 1]^d} \omega \wedge d\eta = (-1)^{m+1} \int_{[0, 1]^d} d\omega \wedge \eta
\]
for every smooth $(d - m - 1)$-form $\eta$ on $\R^d$ that is compactly supported in $[0, 1]^d$. We equip the space of flat $m$-forms with the norm $\| \omega \|_{\text{flat}} = \max \{ \|\omega\|_\infty, \| d \omega \|_\infty \}$.

\begin{thm}[Wolfe]
\label{thm:Wolfe}
  There is an isometric isomorphism between the space of flat $m$-cochains and the space of flat $m$-forms.
\end{thm}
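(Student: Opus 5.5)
The plan is to build two mutually inverse linear maps between flat $m$-cochains and flat $m$-forms, each of norm at most one.

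\emph{From forms to cochains.} Let $\omega$ be a flat $m$-form. On an oriented $m$-cell $\llbracket\sigma\rrbracket$ I would like to set $X(\llbracket\sigma\rrbracket)=\int_\sigma\omega$. This is not meaningful as it stands, since $\omega$ is only defined almost everywhere and $\sigma$ is Lebesgue-null. I would therefore first mollify: put $\omega_\varepsilon=\omega*\rho_\varepsilon$, after extending to a neighbourhood of $[0,1]^d$ or slightly shrinking inward. Then $\omega_\varepsilon$ is smooth, $d\omega_\varepsilon=(d\omega)_\varepsilon$, and
\[
\|\omega_\varepsilon\|_\infty\le\|\omega\|_\infty,\qquad \|d\omega_\varepsilon\|_\infty\le\|d\omega\|_\infty .
\]
For smooth forms, Stokes' theorem on cells gives $X_\varepsilon(\partial S)=\int_S d\omega_\varepsilon$. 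Hence, for any decomposition $T=R+\partial S$,
\[
|X_\varepsilon(T)|\le\|\omega\|_{\text{flat}}\bigl(\bM(R)+\bM(S)\bigr),
\]
and so $|X_\varepsilon(T)|\le\|\omega\|_{\text{flat}}\bF(T)$ uniformly in $\varepsilon$. I would then pass to a limit as $\varepsilon\to0$. One option is a weak* cluster point in $\bF^m$. A cleaner option is to define $X(T)=\lim_\varepsilon X_\varepsilon(T)$ and obtain existence of the limit by averaging over translates: $\int_\sigma\omega_\varepsilon$ is an average of $\int_{\sigma+h}\omega$, and Fubini makes these integrals well defined for a.e.\ $h$. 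In codimension $0$ the limit is simply $\int f\omega$, which is consistent with Example~\ref{ex:flatcodim0}.

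\emph{From cochains to forms.} Let $X\in\bF^m([0,1]^d)$. Following the case $m=0$, I would define $\omega$ pointwise through a Lebesgue differentiation. For a point $x$ and a simple $m$-vector $\xi=v_1\wedge\cdots\wedge v_m$, let $\sigma_r(x,\xi)$ be the small $m$-simplex or parallelotope at $x$ spanned by $rv_i$, and set
\[
\omega(x)(\xi)=\lim_{r\to0}\frac{X(\llbracket\sigma_r(x,\xi)\rrbracket)}{r^m}.
\]
Establishing that this limit exists is the key step. The natural route is to show that $x\mapsto X(\llbracket\sigma+x\rrbracket)$ is Lipschitz, because translating $\sigma$ to $\sigma+h$ differs by the boundary of a prism of mass at most $|h|\,\scrH^m(\sigma)$ plus lateral pieces of mass at most $|h|\,\scrH^{m-1}(\partial\sigma)$. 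This yields $m$-dimensional additive set functions in the direction of $\sigma$ with density bounds, and Rademacher together with Lebesgue differentiation then produce the density. The same construction applied to $dX$ gives $d\omega$. Linearity in $\xi$ comes from additivity and from $X(\partial\cdot)=dX(\cdot)$ on thin simplices. The bounds $\|\omega\|_\infty\le\|X\|$ and $\|d\omega\|_\infty\le\|dX\|\le\|X\|$ follow from $\bF\le\bM$ and $\bF(\partial T)\le\bF(T)$.

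\emph{Inverses and isometry.} Integrating the recovered $\omega$ should return $X$: polyhedral chains are dense, and both sides agree on a.e.\ translate of each cell by the differentiation step. Flat continuity then upgrades a.e.\ translates to all cells. Going the other way, differentiating the cochain built from a form recovers the form a.e. This gives $\|X\|=\|\omega\|_{\text{flat}}$. I would need to watch the normalization: the paper's norm uses a max rather than a sum, and the decomposition estimate above naturally gives exactly the max.

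\emph{Main obstacle.} The hardest part is the cochain-to-form direction: producing an $L^\infty$ form from $X$ that is well defined simultaneously in all $m$-directions, and checking that its distributional exterior derivative is the form produced from $dX$. For this I would first try the translation-averaging and Fubini-on-slices argument, which reduces the problem to the $m=d$ case of Example~\ref{ex:flatcodim0} on each $m$-plane. The delicate point is showing that the identity $\int\omega\wedge d\eta=\pm\int d\omega\wedge\eta$ holds, by approximating the dual pairing with polyhedral chains built from grids of small cells.
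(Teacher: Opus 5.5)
The paper does not prove this theorem; it refers to \cite{Whit} and \cite[4.1.19]{Fede}. Your outline follows the spirit of Whitney's original argument: smoothing for forms $\to$ cochains, and for cochains $\to$ forms, restricting $X$ to the cells of one affine $m$-plane. That restriction is a codimension-zero cochain there, so it has an $L^\infty$ density by Radon--Nikodym; Rademacher plays no role. Joint measurability comes from continuity of $x\mapsto X(\llbracket\sigma+x\rrbracket)$.

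The functional-analytic alternative, closer to Federer's treatment, avoids pointwise differentiation. Viewing flat chains as currents, the chains $\mathcal L^d\wedge\zeta$ with $\zeta\in L^1$ are $\bF$-dense. The bound $|X(\mathcal L^d\wedge\zeta)|\le\|X\|\,\|\zeta\|_{L^1}$ gives $\xi\in L^\infty$ representing $X$, and $dX$ likewise gives $\eta$. The identity $dX(\mathcal L^d\wedge\zeta)=X(\partial(\mathcal L^d\wedge\zeta))$ for smooth compactly supported $\zeta$ is then literally the distributional statement $d\xi=\eta$. That route is shorter. Yours needs fewer prerequisites and yields the concrete fact $X(\llbracket\sigma+h\rrbracket)=\int_{\sigma+h}\omega$ for a.e.\ $h$, which gives a real meaning to integrating a flat form over a cell.

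One step fails as written: pointwise linearity in $\xi$ ``from additivity and $X(\partial\cdot)=dX(\cdot)$ on thin simplices''. Take $m=1$ and the triangle $x$, $x+rv_1$, $x+r(v_1+v_2)$. The Stokes error is $O(r^2)$, but the side $\llbracket x+rv_1,x+rv_1+rv_2\rrbracket$ is not based at $x$. Your translation bound
\[
|X(\llbracket\sigma+h\rrbracket)-X(\llbracket\sigma\rrbracket)|\le\|X\|\,|h|\bigl(\scrH^m(\sigma)+\scrH^{m-1}(\partial\sigma)\bigr)
\]
has a lateral term of order $r\cdot r^{m-1}=r^m$ when $|h|$ and $\diam\sigma$ are of order $r$. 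That is exactly the order you divide by, so the error does not vanish. Lebesgue points do not rescue it, because the displaced side lies in an $\mathcal L^d$-null set. For $m\ge 2$ the same defect appears in multilinearity. Comparing the parallelotope on $v_1+v_1',v_2,\dots,v_m$ with those on $v_1$ and $v_1'$ leaves the boundary of a prism of mass $O(r^{m+1})$, but also pairs of mutually translated lateral $m$-cells of size $r$. These cancel only on average.

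The repair is to prove these relations weakly. Multiply by $\phi\in C^1_c$, integrate in $x$, and change variables so that each shift of size $r$ falls on $\phi$. This costs $\|\nabla\phi\|_\infty\, r\cdot O(r^m)$, and dominated convergence then gives, for instance, $\int\phi\,(f_{v_1+v_2}-f_{v_1}-f_{v_2})=0$.

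The same averaging settles your delicate point without grids. Suppose $X(\llbracket\sigma+h\rrbracket)=\int_{\sigma+h}\omega$ and $dX(\llbracket\tau+h\rrbracket)=\int_{\tau+h}\eta$ for a.e.\ $h$. Mollifying gives $\int_{\partial\tau}\omega_\varepsilon=\int_\tau\eta_\varepsilon$ for every $(m+1)$-cell $\tau$ at distance $>\varepsilon$ from $\partial[0,1]^d$. Hence $d\omega_\varepsilon=\eta_\varepsilon$ there, and $d\omega=\eta$ in distributions.

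In the first direction, the existence of $\lim_\varepsilon X_\varepsilon(\llbracket\sigma\rrbracket)$ needs one more line. The map $h\mapsto X_\varepsilon(\llbracket\sigma+h\rrbracket)$ is the mollification of $g(h)=\int_{\sigma+h}\omega$. Your uniform bound $|X_\varepsilon(T)|\le\|\omega\|_{\text{flat}}\bF(T)$, together with the prism estimate, makes these functions equi-Lipschitz. So $g$ has a Lipschitz representative, and its value at $0$ is the limit. This is also what upgrades ``a.e.\ translates'' to ``all cells'' in your inverse step. With these repairs the outline proves the theorem.
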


Besides Whitney's treatise, we refer to~\cite[4.1.19]{Fede} for a proof of Theorem~\ref{thm:Wolfe}. As a consequence, flat \(m\)-forms can be paired with flat \(m\)-chains by integration. We do not describe here the precise definition of this pairing. Let us simply note that it is not straightforward: for example, given a flat \(1\)-form \(\omega\) on \([0,1]^2\), it is not immediately meaningful to integrate \(\omega\) over an oriented segment \(\llbracket a,b\rrbracket\). Indeed, \(\omega\) is only defined almost everywhere with respect to Lebesgue measure, whereas the segment \([a,b]\) has Lebesgue measure zero.

Another consequence of Wolfe’s theorem is that it allows one to define the wedge product of flat cochains. Indeed, by the theorem, flat \(m\)-cochains can be identified with flat \(m\)-forms. Since flat \(m\)-forms admit an almost everywhere pointwise definition, their wedge product is well defined almost everywhere, and thus induces a well-defined wedge product at the level of flat cochains.

\subsection{Normal currents and charges in middle dimension}
\label{subsec:normal}

We define a \emph{normal $m$-current} to be a flat chain $T \in \bF_m([0, 1]^d)$ whose \emph{normal mass}
\[
\bN(T) = \bM(T) + \bM(\partial T)
\]
is finite. Normal $m$-currents should be thought of as the most general oriented \(m\)-dimensional domains with multiplicity, whose volume and the volume of their boundary are both finite. We denote by $\bN_m([0, 1]^d)$ the space of normal $m$-currents, normed by the normal mass. We note that $\partial$ restricts to a continuous linear map $\bN_m([0, 1]^d) \to \bN_{m-1}([0, 1]^d)$.

\begin{example}[Case $m = d$]
We already saw in Example~\ref{ex:flatcodim0} that the condition \(\bM(T) < \infty\) corresponds to the existence of a density \(u \in L^1([0,1]^d)\). One can further show that the condition \(\bM(\partial T) < \infty\) is equivalent to the requirement that the extension by zero of \(u\) belongs to \(BV([0,1]^d)\). More precisely, \(\bM(\partial T)\) coincides with the total variation of \(u\). Altogether, this yields the identification $\bN_d([0,1]^d) \cong BV([0,1]^d)$.
\end{example}

A fundamental property of the space $\bN_m([0,1]^d)$ is the
Federer-Fleming compactness theorem \cite[4.2.17]{Fede}, which asserts that its unit ball
is compact with respect to the flat norm $\bF$. This feature makes
$\bN_m([0,1]^d)$ well suited for variational problems in geometric
measure theory, such as the Plateau problem; however, we will not
pursue this aspect here. 

Compactness of the unit ball is also a strong indication that a Banach space is a dual space. Indeed, $\bN_m([0,1]^d)$ is a dual space, with a canonical
predual $\bCH^m([0,1]^d)$, the space of $m$-charges. The situation is entirely
analogous to the duality between $CH([0,1]^d)$ and $BV([0,1]^d)$
described in Theorem~\ref{thm:dualCH}. The following definition of a charge is due to De Pauw, Moonens, and Pfeffer, who initiated the study of charges in~\cite{DePaMoonPfef}.

\begin{defn}
\label{def:chm}
  An \emph{$m$-charge} over $[0, 1]^d$ is a linear functional $\omega \colon \bN_m([0, 1]^d) \to \R$ that is continuous in the following sense: for all sequences $(T_n)$ in $\bN_m([0, 1]^d)$ that converge to $0$ in flat norm with $\sup_n \bN(T_n) < \infty$, we have $\omega(T_n) \to 0$. The space of $m$-charges is denoted $\bCH^m([0, 1]^d)$. 
\end{defn}

The continuity requirement in this definition extends that of Proposition~\ref{prop:chargeCONT}. Indeed, in codimension zero, $d$-charges coincide with strong charges in the sense of Definition~\ref{def:CH01d} (thus, $m$-charges generalize strong charges, but the adjective ``strong'' is no longer used for them).
 This follows from the identification of $\bN_d([0,1]^d)$ with $BV([0,1]^d)$. One can show that $\bCH^m([0,1]^d)$ is a closed linear subspace of the dual space $\bN_m([0,1]^d)^*$. We therefore endow $\bCH^m([0,1]^d)$ with the operator norm
\[
\|\omega\|_{\bCH^m}= \sup\left\{ \omega(T) : T \in \bN_m([0,1]^d)\ \text{and}\ \bN(T)\leq 1 \right\}.
\]
Observe that flat $m$-cochains, when restricted to the dense subspace $\bN_m([0, 1]^d)$ of $\bF_m([0, 1]^d)$, define $m$-charges on $[0, 1]^d$. Indeed, $\bF$-continuity is stronger than the continuity required in Definition~\ref{def:chm}, so these cochains should be regarded as very regular objects. Recall that flat $0$-cochains correspond to Lipschitz functions. In the following example, we focus on $0$-charges.

\begin{example}[Case $m = 0$]
\label{ex:ch0}
   Each $0$-charge $\omega$ determines a function $\Gamma(\omega)$ defined by $\Gamma(\omega)(x) = \omega (\llbracket x \rrbracket)$ for all $x \in [0, 1]^d$. This function is continuous, indeed, let $(x_n)$ be a sequence that converges to $x \in [0, 1]^d$. The sequence of normal $0$-currents $(\llbracket x_n \rrbracket)$ converges to $\llbracket x \rrbracket$ in flat norm, with uniformly bounded normal masses, hence $\Gamma(\omega)(x_n) \to \Gamma(\omega)(x)$. This defines an operator
   \[
   \Gamma \colon \bCH^0([0, 1]^d) \to C([0, 1]^d).
   \]
   We admit that it is injective and continuous. Let us prove surjectivity.
   
Let $g \in C([0,1]^d)$ be a continuous function. Define
\[
\omega \colon \bP_0([0,1]^d) \to \R : 
\sum_{i \in I} a_i \llbracket x_i \rrbracket \mapsto \sum_{i \in I} a_i g(x_i),
\]
with the intention of extending it later.

For any $\varepsilon > 0$, choose a Lipschitz function $f$ on $[0,1]^d$ such that $\|g - f\|_\infty \le \varepsilon$. Setting $\theta = \max\{\|f\|_\infty, \rmLip f\}$,
we have, for every polyhedral $0$-chain $T = \sum_{i \in I} a_i \llbracket x_i \rrbracket$,
\[
|\omega(T)| \leq \left|\sum_{i \in I} a_i f(x_i)\right| + \left|\sum_{i \in I} a_i (g(x_i) - f(x_i))\right|.
\]
This gives
\begin{equation}
\label{eq:omegaT}
\forall \varepsilon > 0, \ \exists \theta \ge 0 \text{ such that } \forall T \in \bP_0([0,1]^d), \qquad 
|\omega(T)| \le \theta \bF(T) + \varepsilon \bN(T).
\end{equation}
Property~\eqref{eq:omegaT} allows us to extend $\omega$ to $\bN_0([0,1]^d)$ by
\[
\omega(T) = \lim_{n \to \infty} \omega(T_n).
\]
for any sequence $(T_n)$ of polyhedral $0$-chains that $\bF$-converges to $T$ with uniformly bounded normal masses. The sequence $(\omega(T_n))$ forms a Cauchy sequence, so the limit exists. The bound~\eqref{eq:omegaT} then holds for all $T \in \bN_0([0,1]^d)$, and from that we can deduce that $\omega$ satisfies the continuity property of $0$-charges. Clearly, $g = \Gamma(\omega)$.
\end{example}

More generally, continuous differential $m$-forms can be viewed as $m$-charges.
To make this precise, it is convenient to regard flat $m$-chains $T$ as currents,
following Federer's approach. Indeed, every smooth form
$\omega \in C^\infty([0,1]^d; \wedgeop^m \R^d)$ is a flat $m$-form, so the pairing
$\langle T, \omega \rangle$ is well defined by Wolfe's theorem. The finiteness of the mass of $T$
corresponds to the inequality
\[
|\langle T, \omega \rangle| \le \bM(T) \, \|\omega\|_\infty,
\]
which holds for all $\omega \in C^\infty([0,1]^d; \wedgeop^m \R^d)$.

By a vector-valued version of the Riesz representation theorem, there exists a
unique finite measure $\|T\|$ on $[0,1]^d$ and a Borel $m$-vector field
$\vec{T} \colon [0,1]^d \to \wedgeop_m \R^d$,
unique $\|T\|$-almost everywhere, such that
\begin{equation}
\label{eq:massfinie}
\langle T, \omega \rangle = \int_{[0,1]^d} \langle \vec{T}, \omega \rangle \, d\|T\|
\end{equation}
for all smooth forms $\omega \in C^\infty([0,1]^d; \wedgeop^m \R^d)$. At this point,
the right-hand side also makes sense for continuous $m$-forms. Therefore, for every
$\omega \in C([0,1]^d; \wedge^m \R^d)$, we define a linear map
$\Lambda(\omega) \colon \bN_m([0,1]^d) \to \R$ by
\[
\Lambda(\omega)(T) = \int_{[0,1]^d} \langle \vec{T}, \omega \rangle \, d\|T\|
\]
for all $T \in \bN_m([0,1]^d)$. By an argument analogous to that of Example~\ref{ex:ch0}, one can show that
$\Lambda(\omega)$ defines an $m$-charge. Moreover, the map
\[
\Lambda \colon C([0,1]^d; \wedgeop^m \R^d) \longrightarrow \bCH^m([0,1]^d)
\]
is a continuous injection. In the sequel, we will identify the continuous $m$-form $\omega$ with the $m$-charge $\Lambda(\omega)$.

\begin{defn}
The \emph{exterior derivative} of an $m$-charge $\omega$ is the $(m+1)$-charge $d\omega$ defined by
\[
d\omega(T) = \omega(\partial T)
\]
for all $T \in \bN_{m+1}([0, 1]^d)$. One checks easily that $d \circ d = 0$.
\end{defn}
Thus, the exterior derivative of continuous $m$-forms provides yet another class
of charges in the middle dimension. It turns out that, together with the previous
examples, this describes all $m$-charges, as the following theorem shows.
\begin{thm}[{\cite[Theorem~6.1]{DePaMoonPfef}}]
For every $\omega \in \bCH^m([0,1]^d)$, there exist continuous differential
forms $\omega_1$ and $\omega_2$ of degree $m$ and $m-1$, respectively, such that
\[
\omega = \omega_1 + d\omega_2.
\]
\end{thm}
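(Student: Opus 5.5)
The plan is to imitate the closed-range argument used for $CH([0,1]^d)$ in Subsection~\ref{subsec:strongCh}, now with two kinds of forms. I would introduce the operator
\[
\Phi \colon C([0,1]^d;\wedgeop^m\R^d)\times C([0,1]^d;\wedgeop^{m-1}\R^d)\to \bCH^m([0,1]^d),\qquad \Phi(\omega_1,\omega_2)=\Lambda(\omega_1)+d\Lambda(\omega_2).
\]
The domain carries the norm $\max\{\|\omega_1\|_\infty,\|\omega_2\|_\infty\}$. The theorem is exactly the statement that $\Phi$ is surjective.

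\textbf{Step 1: $\Phi$ is well defined and bounded.} The map $\Lambda$ is a continuous injection into $\bCH^m$ (resp.\ $\bCH^{m-1}$), as stated above. The exterior derivative $d\colon \bCH^{m-1}\to\bCH^m$ is bounded, since $\bN(\partial T)=\bM(\partial T)\le \bN(T)$. It preserves the continuity property of Definition~\ref{def:chm}, because $\bF(\partial T_n)\le\bF(T_n)$ and $\bN(\partial T_n)\le \bN(T_n)$.

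\textbf{Step 2: identify the dual of $\bCH^m$.} Let $\Upsilon\colon \bN_m([0,1]^d)\to \bCH^m([0,1]^d)^*$ be the evaluation map $T\mapsto(\omega\mapsto\omega(T))$. I would prove that $\Upsilon$ is an isometric isomorphism, as in Theorem~\ref{thm:dualCH}. This is the duality announced in the text, and I expect it to be the main obstacle.

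Isometry is the easier half. Smooth forms are $m$-charges, so $\|\Upsilon T\|\le \bN(T)$ is immediate. The reverse inequality will actually fall out of Step~3.

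For surjectivity, take $\alpha\in(\bCH^m)^*$. Then $\alpha\circ\Lambda$ is a functional on continuous $m$-forms, hence an $m$-current $T$ of finite mass by Riesz representation. Similarly, $\alpha\circ d\circ\Lambda$ on continuous $(m-1)$-forms shows that $\partial T$ has finite mass. By the Federer–Fleming theory, $T$ is then a normal current and therefore a flat chain.

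It remains to show $\alpha=\Upsilon(T)$. The two functionals agree on the image of $\Phi$, so I need that image to be dense. Suppose some $\beta\in(\bCH^m)^*$ vanishes on the image. The same Riesz argument represents $\beta$ by a normal current $S$ with $\Upsilon(S)=\beta$ and $S=0$, so $\beta=0$. This is circular unless the density of smooth forms in $\bCH^m$ is proven directly. I would get that density from the continuity property of charges together with the weak* metrizability of bounded sets of $\bN_m$ by $\bF$, which follows from Federer–Fleming compactness. This is the precise analogue of Proposition~\ref{prop:chargeCONT}, and it is the step I would work out most carefully, following~\cite{DePaMoonPfef}.

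\textbf{Step 3: closed range.} By the closed range theorem, $\Phi$ is surjective if and only if $\Phi^*$ is injective with closed range, i.e.\ bounded below. Under the identification of Step~2, for $T\in\bN_m$ we have
\[
\langle \Phi^*T,(\omega_1,\omega_2)\rangle=\int\langle\vec T,\omega_1\rangle\,d\|T\|+\int\langle\overrightarrow{\partial T},\omega_2\rangle\,d\|\partial T\|.
\]
So $\Phi^*T=(T,\partial T)$, viewed as a pair of vector measures in $\scrM(\wedgeop_m)\times\scrM(\wedgeop_{m-1})$. The dual of a product with the max norm carries the sum of the total variation norms, so
\[
\|\Phi^*T\|=\bM(T)+\bM(\partial T)=\bN(T).
\]
Thus $\Phi^*$ is an isometry, which gives surjectivity of $\Phi$. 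The desired $\omega_1,\omega_2$ then exist for every $m$-charge.
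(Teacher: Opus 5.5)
\textbf{There is a genuine gap, in Step~2.} Your Steps~1 and~3 are sound, and the overall architecture is the natural one. It is the middle-dimensional analogue of the closed-range argument the paper gives for $CH([0,1]^d)$ in Subsection~\ref{subsec:strongCh}; for the present theorem the paper itself gives no proof and only cites \cite{DePaMoonPfef}.

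Step~3 needs $\|\Phi^*\alpha\|\geq c\|\alpha\|$ for every $\alpha\in\bCH^m([0,1]^d)^*$. Everything therefore rests on the surjectivity of $\Upsilon$, and that is not proved. Your Riesz construction only gives a normal current $T$ with $\Upsilon(T)=\alpha$ on $\operatorname{im}\Phi$. Concluding $\alpha=\Upsilon(T)$ requires $\operatorname{im}\Phi$ to be dense in $\bCH^m([0,1]^d)$. That is the density of smooth forms in $\bCH^m([0,1]^d)$, which is already a substantial part of the theorem, not a technicality.

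Your proposed remedy does not break the circle. In the paper, Proposition~\ref{prop:chargeCONT} is deduced \emph{from} the duality of Theorem~\ref{thm:dualCH}, not used to prove it. Moreover, the density used in that duality proof comes for free there, because $CH([0,1]^d)$ is \emph{defined} as $\operatorname{im}\diver$. Here $\bCH^m([0,1]^d)$ is defined intrinsically by the continuity property of Definition~\ref{def:chm}, so nothing is free. Speaking of a weak* topology on bounded sets of $\bN_m$ also presupposes a predual, which is exactly what Step~2 is supposed to produce.

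\textbf{The missing idea} is a functional-analytic principle that turns Federer--Fleming compactness into duality. Either of the following closes the gap.

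(a) Use the Dixmier--Ng theorem. The unit ball of $\bN_m([0,1]^d)$ is $\bF$-compact, by Federer--Fleming compactness together with lower semicontinuity of $\bN$ under flat convergence. The $\bN$-bounded functionals whose restriction to that ball is $\bF$-continuous are exactly the $m$-charges; note that charges are automatically $\bN$-bounded. Ng's theorem then says that $\Upsilon$ is an isometric isomorphism, which is your Step~2, and your Step~3 finishes the proof.

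(b) Bypass the duality altogether. Let $X$ be the domain of $\Phi$, so that $X^*=\scrM([0,1]^d;\wedgeop_m\R^d)\times\scrM([0,1]^d;\wedgeop_{m-1}\R^d)$. The subspace $N=\{(T,\partial T):T\in\bN_m([0,1]^d)\}$ of $X^*$ is weak*-closed. By Krein--\v{S}mulian it suffices to check bounded sequences, and any weak* limit $(\mu,\nu)$ satisfies $\nu=\partial\mu$, so $\mu$ is normal. On bounded subsets of $N$, weak* convergence implies $\bF$-convergence: every subsequence has an $\bF$-convergent subsequence, and its limit must be the weak* limit. Hence a charge is weak*-continuous on bounded subsets of $N$. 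By Banach--Dieudonn\'e it is then weak*-continuous on $N\cong(X/N_\perp)^*$, so it is induced by some $(\omega_1,\omega_2)\in X$. That is, $\omega=\omega_1+d\omega_2$, with $\max\{\|\omega_1\|_\infty,\|\omega_2\|_\infty\}$ arbitrarily close to $\|\omega\|_{\bCH^m}$.
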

This theorem basically asserts that $m$-charges form the smallest cochain complex generated by continuous differential forms. It plays a role analogous to Wolfe's theorem for flat cochains. However, unlike flat cochains, charges are too irregular for a meaningful notion of a wedge product to be defined between them. It is possible to prove a negative result that gives a precise formulation of this statement, but we will simply note the following intuition: if such a wedge product were possible, one could define integrals of the form $\int_0^1 f \, dg$ for arbitrary continuous functions $f, g$ on $[0,1]$ as the action of the $1$-charge $f \wedge dg$ on the normal current $\llbracket 0,1 \rrbracket$. This, however, is morally incorrect. The impossibility of defining $\wedge$ is closely related to the well-known difficulties in multiplying distributions.

\subsection{Fractional currents and  fractional charges} 
\label{subsec:ygt}
Taken together, fractional currents and fractional charges form a class of objects on which a Hölder-type non-smooth calculus can be developed. We indeed introduce an exterior calculus apparatus that allows one to treat Hölder differential forms, their exterior derivatives, and even their wedge products within the Young regime. We also explicitly characterize the corresponding chains, which represent the most general weakly defined surfaces over which such Hölder differential forms can also be integrated.

A closely related geometric integration theory was developed in \cite{ChandraSingh2025} by Chandra and Singh, where integration of non-smooth differential forms is also constructed. Their approach is complementary to ours: it is based on a simplicial sewing lemma, whereas our method is more Fourier-analytic in nature. Related results on the pullback of differential forms by Hölder maps can be found in \cite{HajlaszMirraSchikorra2025HoelderHeisenberg}. Another related line of work, allowing for a form of analysis on fractal sets, is provided by Harrison's theory of chainlet geometry; see~\cite{Harrison1998_continuity_integral_domain, Harrison2005_lectures_chainlet_geometry}.

Unlike the two previous settings, we begin by introducing the cochains of interest,
namely the $\gamma$-fractional $m$-charges. In any case, in geometric integration
theory the focus lies primarily on cochains rather than on chains, in contrast with
geometric measure theory.

The following definition is naturally inspired by that of $CH^\gamma([0, 1]^d)$ (Definition~\ref{def:chgamma}), the space of
top-dimensional fractional charges. The $L^1$ norm and the total variation of a
$BV$ function are replaced by their counterparts in arbitrary codimension, namely
the flat norm and the normal mass of a normal current.
\begin{defn}
  Let $\gamma \in \mathopen{(}0, 1\mathclose{)}$. A \emph{$\gamma$-fractional charge} is a linear map $\omega \colon \bN_m([0, 1]^d) \to \R$ for which there is a constant $C \geq 0$ such that
  \[
  |\omega(T)| \leq C \bN(T)^\gamma \bF(T)^{1 - \gamma}
  \]
  for all $T \in \bN_m([0, 1]^d)$. The best such constant $C \geq 0$ is the norm $\| \omega \|_{\bCH^{m,\gamma}}$. The space of $\gamma$-fractional charges is denoted $\bCH^{m,\gamma}([0, 1]^d)$.
\end{defn}

The space $\bCH^{m,\gamma}([0,1]^d)$ lies between $\bF^m([0,1]^d)$, the space of flat $m$-cochains, and $\bCH^m([0,1]^d)$. Flat $m$-cochains are the most regular objects; they can be viewed as $1$-fractional charges, if we allow $\gamma = 1$ in the preceding definition. In fact, one can show (unpublished result) that the space $\bCH^{m, \gamma}([0, 1]^d)$ arises via the real interpolation method:
\begin{equation}
\label{eq:chgammainter}
\bCH^{m, \gamma}([0, 1]^d) = \left[\bCH^m([0, 1]^d), \bF^m([0, 1]^d) \right]_{\gamma, \infty}.
\end{equation}

Using the inequalities $\bF(\partial T) \leq \bF(T)$ and $\bN(\partial T) \leq \bN(T)$, which hold for every normal current $T$, one readily verifies that the exterior derivative of a $\gamma$-fractional charge is again $\gamma$-fractional. In particular,
\[
0 \longrightarrow \bCH^{0, \gamma}([0, 1]^d)
\overset{d}{\longrightarrow} \bCH^{1, \gamma}([0, 1]^d)
\overset{d}{\longrightarrow} \cdots
\overset{d}{\longrightarrow} \bCH^{d, \gamma}([0, 1]^d)
\longrightarrow 0
\]
defines a cochain complex, an algebraic structure that was already present for flat cochains and charges.

The next examples show that those cochains provide a convenient setting for manipulating of Hölder differential forms.

\begin{example}[$\gamma$-fractional $0$-charges]
Recall the isomorphism $\Gamma \colon \bCH^0([0, 1]^d) \to C([0, 1]^d)$ from Example~\ref{ex:ch0}. Here we show that it restricts to an isomorphism $\bCH^{0, \gamma}([0, 1]^d) \to  C^\gamma([0, 1]^d)$. This is of course consistent with~\eqref{eq:chgammainter}, because
\[
C^\gamma([0, 1]^d) = \left[C([0, 1]^d), \rmLip([0, 1]^d) \right]_{\gamma, \infty}.
\]
Let $\omega \in \bCH^{0, \gamma}([0, 1]^d)$ and $x, y \in [0, 1]^d$. Then
\begin{align*}
|\Gamma(\omega)(y) - \Gamma(\omega)(x)|&  = |\omega(\llbracket y \rrbracket - \llbracket x \rrbracket)|\\ & \leq \|\omega\|_{\bCH^{0, \gamma}} \bN(\llbracket y \rrbracket - \llbracket x \rrbracket)^{1 - \gamma} \bF(\llbracket y \rrbracket - \llbracket x \rrbracket)^{\gamma} \\ & \leq 2^{1 - \gamma} \| \omega\|_{\bCH^{0, \gamma}} |y - x|^\gamma.
\end{align*}
and moreover one establishes more easily that $|\Gamma(\omega)(x)| \leq \|\omega\|_{\bCH^{0, \gamma}}$. All of this shows that $\Gamma$ restricts to a continuous linear operator
\[
\Gamma \colon \bCH^{0, \gamma}([0, 1]^d) \to C^\gamma([0, 1]^d)
\]
and we claim that it is a Banach space isomorphism. It remains to show that if $g \in C^\gamma([0, 1]^d)$, then the charge $\omega = \Lambda(g)$ is $\gamma$-fractional. Let $\varepsilon \leq 1$. It is possible to find a Lipschitz approximation $f$ of $g$, satisfying
\[
\|f\|_\infty \leq \|g\|_\infty, \|f -g\|_\infty \leq\varepsilon^\gamma [g]_{C^\gamma} \text{ and } \rmLip f \leq \varepsilon^{\gamma-1} [g]_{C^\gamma}.
\]
Such an $f$ can be obtained by convolution, or even by inf-convolution
\[
f \colon x \mapsto \inf \left\{ g(y) + \varepsilon^{\gamma - 1} [g]_{C^\gamma} |y - x| : y \in [0, 1]^d \right\}.
\]
For any polyhedral $0$-chain $T = \sum_{i \in I} a_i \llbracket x_i \rrbracket$, one has
\begin{align*}
|\omega(T)| & \leq \left|\sum_{i \in I} a_i f(x_i)\right| + \left|\sum_{i \in I} a_i (g(x_i) - f(x_i))\right| \\
& \leq \max \{ \|f\|_\infty , \rmLip f \} \bF(T) + \|g - f\|_\infty \bN(T) \\
& \leq \max \{ \|g\|_\infty, [g]_{C^\gamma} \} \left( \varepsilon^\gamma \bF(T) + \varepsilon^{\gamma - 1} \bN(T) \right)
\end{align*}
By choosing $\varepsilon = \bF(T) / \bN(T)$, one then obtains
\[
|\omega(T)| \leq 2  \max \{ \|g\|_\infty, [g]_{C^\gamma} \} \bN(T)^{1 - \gamma} \bF(T)^\gamma.
\]
This extends by density to all $T \in \bN_0([0, 1]^d)$ and we conclude that $\omega$ is $\gamma$-fractional.
\end{example}

\begin{example}[Hölder differential $m$-forms are fractional $m$-charges]
Using arguments similar to those in the previous examples, one can show that the map
\[
\Lambda \colon C\left([0,1]^d;\wedgeop^m\R^d\right)\to \bCH^m([0,1]^d)
\]
restricts to a continuous, injective linear operator
\[
\Lambda \colon C^\gamma\!\left([0,1]^d;\wedgeop^m\R^d\right)\to \bCH^{m,\gamma}([0,1]^d).
\]
In the sequel, we identify a Hölder differential $m$-form $\omega$ with its image $\Lambda(\omega)$.
\end{example}

\begin{quest}
Is every $\gamma$-fractional $m$-charge of the form $\omega_1 + d\omega_2$, where
$\omega_1$ and $\omega_2$ are $\gamma$-Hölder differential forms of degrees
$m$ and $m-1$, respectively?
\end{quest}

The most important result about fractional charges is the existence of a partially defined wedge product. Roughly speaking, if $\omega$ and $\eta$ are $\alpha$- and $\beta$-fractional charges of any degree, and their combined regularity satisfies the \emph{Young condition} $\alpha+\beta>1$, then their wedge product $\omega\wedge\eta$ can be defined. To describe its continuity properties, we first need to introduce the notion of weak* convergence in $\bCH^{m,\gamma}([0,1]^d)$. The following is a straightforward generalization of Definition~\ref{def:weak*}.
\begin{defn}
A sequence of $\gamma$-fractional charges $(\omega_n)$ is said to converge \emph{weakly*} to $\omega \in \bCH^{m, \gamma}([0, 1]^d)$ if it is bounded in $\bCH^{m, \gamma}([0, 1]^d)$ and $\omega_n \to \omega$ in $\bCH^m([0, 1]^d)$.
\end{defn}
In particular, for $m = 0$, if we identify (fractional) charges with (Hölder) continuous functions, we recover the standard concept of weak$^*$ convergence in $C^\gamma([0,1]^d)$.
\begin{thm}
\label{thm:wedge}
        Let $\alpha, \beta \in \mathopen{(} 0, 1 \mathclose{]}$ be such that $\alpha + \beta > 1$. There is a unique bilinear map
        \[
        \wedge \colon \bCH^{m,\alpha}([0, 1]^d) \times \bCH^{m', \beta}([0, 1]^d) \to
        \bCH^{m+m', \alpha + \beta - 1}([0, 1]^d)
        \]
        that
        \begin{enumerate}
            \item extends the wedge product between flat cochains (including smooth forms);
            \item is weak* continuous: if $\omega_n \to \omega$ weakly* in $\bCH^{m, \alpha}([0, 1]^d)$ and $\eta_n \to \eta$ weakly* in $\bCH^{m, \beta}([0, 1]^d)$ then 
            \[
            \omega_n \wedge \eta_n \to \omega \wedge \eta
            \]
             weakly* in $\bCH^{m+m', \alpha + \beta -1}([0, 1]^d)$.
        \end{enumerate}
\end{thm}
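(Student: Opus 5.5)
The plan is to build the wedge product by approximation through flat cochains, and to obtain uniqueness and weak* continuity from a quantitative, bilinear estimate. The main ingredient is an interpolation-type decomposition of a fractional charge. For $\omega \in \bCH^{m,\alpha}([0,1]^d)$ and each scale $\varepsilon \in (0,1]$, I want flat cochains $\omega_\varepsilon$ with
\[
\|\omega_\varepsilon\|_{\bF^m} \lesssim \varepsilon^{\alpha-1}\|\omega\|_{\bCH^{m,\alpha}}, \qquad \|\omega - \omega_\varepsilon\|_{\bCH^m} \lesssim \varepsilon^{\alpha}\|\omega\|_{\bCH^{m,\alpha}}.
\]
This is the $K$-functional characterization behind~\eqref{eq:chgammainter}. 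I would produce $\omega_\varepsilon$ by mollifying at scale $\varepsilon$, that is, by averaging push-forwards of chains under translations $x \mapsto x+z$ with $|z|\le\varepsilon$. The estimates should follow from the standard homotopy formula, which gives $\bF(T - \tau_{z\#}T) \lesssim |z|\,\bN(T)$, combined with the fractional bound $|\omega(T)| \le C\bN(T)^{1-\alpha}\bF(T)^{\alpha}$. Here I would be careful about boundary effects on the cube, handling them by first dilating slightly toward the center.

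Next I define $\omega\wedge\eta$ through the dyadic telescoping sum
\[
\omega\wedge\eta = \lim_{n\to\infty} \omega_{2^{-n}}\wedge\eta_{2^{-n}} = \omega_1\wedge\eta_1 + \sum_{n\ge 0}\bigl(\omega_{2^{-n-1}}\wedge\eta_{2^{-n-1}} - \omega_{2^{-n}}\wedge\eta_{2^{-n}}\bigr),
\]
where each term is a wedge of flat cochains in the sense of Wolfe's theorem (Theorem~\ref{thm:Wolfe}). To bound the increments I split each one as $(\omega_{k+1}-\omega_k)\wedge\eta_{k+1} + \omega_k\wedge(\eta_{k+1}-\eta_k)$.

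The key estimate, and the main obstacle, is a \emph{mixed bound}: a wedge of a flat cochain that is small in $\bCH$ with a flat cochain that is large in $\bF$ must be controlled in a fractional norm. Concretely, I want to show that the difference $\Delta_k$ at scale $2^{-k}$ satisfies
\[
|\Delta_k(T)| \lesssim 2^{-k(\alpha+\beta-1)}\min\{\bN(T),\; 2^{k}\bF(T)\}.
\]
The bound must hold with a factor $2^{-k}$ beyond the naive scaling, and that extra factor is where $\alpha+\beta>1$ enters. My first attempt would treat $(\omega_{k+1}-\omega_k)$ as $d$ of something plus a small remainder at scale $2^{-k}$. This amounts to a localized homotopy or Poincaré lemma in which $\omega_{k+1}-\omega_k = \rho_k + d\sigma_k$ with
\[
\|\rho_k\|_\infty \lesssim 2^{-k\alpha}, \qquad \|\sigma_k\|_\infty \lesssim 2^{-k(1+\alpha)}, \qquad \|d\rho_k\|_\infty \lesssim 2^{k(1-\alpha)}.
\]
I would then use Leibniz's rule $d\sigma_k\wedge\eta = d(\sigma_k\wedge\eta) \pm \sigma_k\wedge d\eta$ to move the derivative onto the chain $T$ via $\partial T$. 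If this proves too delicate in general codimension, the fallback is a Whitney-type cubical decomposition of $T$, obtained by slicing along dyadic grids with Federer–Fleming deformation, which reduces the estimate to cubical chains at scale $2^{-k}$. On those chains a sewing argument analogous to Lemma~\ref{lem:sewing} applies.

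Summing the interpolated bounds over $k$, choosing the crossover $2^{-k}\approx\bF(T)/\bN(T)$, yields $|\omega\wedge\eta(T)|\lesssim \bN(T)^{2-\alpha-\beta}\bF(T)^{\alpha+\beta-1}$, so $\omega\wedge\eta \in \bCH^{m+m',\alpha+\beta-1}$. Property~(1) follows because for flat $\omega,\eta$ one may take $\omega_\varepsilon=\omega$ and $\eta_\varepsilon = \eta$ up to errors controlled by the same estimates. For weak* continuity, the bilinear estimate is uniform on bounded sets. Convergence in $\bCH$ gives $\omega_{n,\varepsilon}\to\omega_\varepsilon$ for each fixed $\varepsilon$, since mollification is $\bCH$-to-$\bF$ continuous at a fixed scale. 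A standard $\varepsilon/3$ argument, using the uniformly small tail $\sum_{k>K}$, then gives $\omega_n\wedge\eta_n\to\omega\wedge\eta$ in $\bCH$ with bounded fractional norms. Uniqueness follows from weak* continuity together with the weak* density of flat cochains in $\bCH^{m,\alpha}$, which is supplied by the approximations $\omega_\varepsilon$ themselves, since these are bounded in $\bCH^{m,\alpha}$ and converge in $\bCH^m$.
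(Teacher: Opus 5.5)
Your overall architecture is the paper's. You use mollified approximations at dyadic scales (equivalently, partial sums of a Littlewood--Paley decomposition satisfying the hypotheses of Lemma~\ref{lem:alp}), the telescoping split $(\omega_{k+1}-\omega_k)\wedge\eta_{k+1}+\omega_k\wedge(\eta_{k+1}-\eta_k)$, and summation with crossover $2^{-k}\approx\bF(T)/\bN(T)$. Your mollification estimates and your weak* continuity and uniqueness arguments are sound once a bilinear bound is available. The gap is the mixed bound on $\Delta_k$: you call it the main obstacle but never establish it, and the route you sketch fails as stated. The bounds $\|\rho_k\|_\infty\lesssim 2^{-k\alpha}$, $\|\sigma_k\|_\infty\lesssim 2^{-k(1+\alpha)}$ cannot hold in general. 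Take $d=m=1$, $\omega=dg$, and let $g_k$ be the mollification of $g$ at scale $2^{-k}$. Your bounds give $|(g_{k+1}-g_k)(y)-(g_{k+1}-g_k)(x)|\lesssim |y-x|\,2^{-k\alpha}+2^{-k(1+\alpha)}$. Interpolating with $\|\omega_{k+1}-\omega_k\|_{\bF}\lesssim 2^{k(1-\alpha)}$ and summing over $k$ would then make every $g\in C^\alpha$ (with $\alpha<1$) Hölder of order $(1+\alpha)/2>\alpha$, which is false. The Whitney/Federer--Fleming/sewing fallback is not an argument. You also misdiagnose the difficulty: no gain beyond naive scaling is needed, and $\alpha+\beta>1$ enters only through the positivity of $\gamma=\alpha+\beta-1$ in Lemma~\ref{lem:alp}.

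The missing idea is the elementary module estimate, which the paper refers to as the natural estimates for wedge products of flat cochains:
\[
\|\omega\wedge\eta\|_{\bF^{m+m'}}\le C\|\omega\|_{\bF^m}\|\eta\|_{\bF^{m'}},\qquad \|\omega\wedge\eta\|_{\bCH^{m+m'}}\le C\|\omega\|_{\bCH^m}\|\eta\|_{\bF^{m'}},
\]
together with the symmetric version in which the factors are exchanged. The second inequality follows by duality. For smooth $\eta$ (all factors in your sum are mollified) one has $(\omega\wedge\eta)(T)=\pm\,\omega(T\llcorner\eta)$. The identity $\partial(T\llcorner\eta)=\pm(\partial T)\llcorner\eta\pm T\llcorner d\eta$ then gives $\bN(T\llcorner\eta)\le C\|\eta\|_{\bF}\bN(T)$. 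Your Leibniz computation would also yield it, provided you use the correct quotient-norm bound $\|\rho_k\|_\infty+\|\sigma_k\|_\infty\lesssim 2^{-k\alpha}$, which is all the $\bN(T)$ estimate needs. Combine this with $\|\omega_{k+1}-\omega_k\|_{\bCH}\lesssim 2^{-k\alpha}$, $\|\omega_{k+1}-\omega_k\|_{\bF}\lesssim 2^{k(1-\alpha)}$, $\|\eta_k\|_{\bF}\lesssim 2^{k(1-\beta)}$ and the analogous bounds with the roles of $\omega$ and $\eta$ swapped. You get at once $\|\Delta_k\|_{\bCH}\lesssim 2^{-k(\alpha+\beta-1)}$ and $\|\Delta_k\|_{\bF}\lesssim 2^{k(2-\alpha-\beta)}$. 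This is exactly your $\min\{\bN(T),2^k\bF(T)\}$ bound, i.e.\ the hypothesis of Lemma~\ref{lem:alp} with $\gamma=\alpha+\beta-1$. Your verification of property~(1) and the bilinear bound behind weak* continuity rest on this same estimate.
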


It can be seen that this wedge product encompasses several constructions. 
For instance, if $f$ and $\omega$ satisfy the regularity conditions of Theorem~\ref{thm:young}, the Young indefinite integral is
\[
(\mathrm{Y}) \int_\bullet f \, \omega = f \wedge \omega.
\]
(The top-dimensional result in Theorem~\ref{thm:young} provides better regularity for $f \wedge \omega$ than Theorem~\ref{thm:wedge}: both $\omega$ and $f \wedge \omega$ are $\gamma$-fractional; this improvement stems from the fact that $f$ has no ``differential'' part and therefore does not degrade the regularity of $f \wedge \omega$.) Züst's integral is also recovered: the wedge product
\[
f \wedge dg_1 \wedge \cdots \wedge dg_d
\]
of $\beta$, $\gamma_1, \dots, \gamma_d$-Hölder functions is well-defined under the condition \[\beta + \gamma_1 + \cdots + \gamma_d > d. \]
Moreover, it is $(\beta + \gamma_1 + \cdots + \gamma_d - d)$-fractional.

We now outline an idea of the proof of Theorem~\ref{thm:wedge}. Our approach is more ``Fourier-analytic'' in nature. 
Since the space $\bCH^{m,\gamma}([0,1]^d)$ is an interpolation space, abstract results from 
interpolation theory—specifically, the equivalence between the $K$- and $J$-methods of real 
interpolation, see~\cite[Section~3.3]{BerghLofstrom1976}—guarantee the existence of Littlewood-Paley-type decompositions for fractional 
charges. These decompositions will serve as our main tool. To avoid relying on abstract machinery, 
we instead present the following easy-to-prove result.
\begin{lem}
\label{lem:alp}
        Suppose $0 < \gamma < 1$. Let $(\omega_n)$ be a sequence in
  of flat $m$-cochains such that
  \[
  \|\omega_n\|_{\bF^m} \leq \kappa 2^{n(1 - \gamma)} \text{ and }
  \|\omega_n\|_{\bCH^m} \leq \frac{\kappa}{2^{n\gamma}}
  \]
  for some $\kappa \geq 0$ and for all $n$. Then $\sum_{n=0}^\infty
  \omega_n$ converges weakly* to a $\gamma$-fractional charge and
  \[
  \left\| \sum_{n=0}^\infty \omega_n \right\|_{\bCH^{m,\gamma}} \leq
  C \kappa 
  \]
  where $C = C(\gamma)$ is a constant.
  \end{lem}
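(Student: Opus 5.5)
Set $\sigma_N = \sum_{n=0}^{N} \omega_n$. The plan has three steps: establish convergence in $\bCH^m([0,1]^d)$, prove a uniform fractional bound on the partial sums $\sigma_N$ by the standard $J$-method splitting, and then pass to the limit.

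\emph{Convergence.} Since $\|\omega_n\|_{\bCH^m} \leq \kappa 2^{-n\gamma}$ and $\gamma > 0$, the series is absolutely convergent in the Banach space $\bCH^m([0,1]^d)$. This space is closed in $\bN_m([0,1]^d)^*$, and flat cochains restricted to normal currents belong to it, so each $\omega_n \in \bCH^m([0,1]^d)$ and $\sigma_N \to \sigma$ in $\bCH^m$ for some $m$-charge $\sigma$. Throughout we use $\|\omega\|_{\bCH^m} = \sup\{\omega(T) : \bN(T) \leq 1\}$ and $|\omega(T)| \leq \|\omega\|_{\bF^m} \bF(T)$ for flat cochains, together with $\bF(T) \leq \bM(T) \leq \bN(T)$.

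\emph{Uniform fractional bound.} This is the core step. Fix $T \in \bN_m([0,1]^d)$ with $T \neq 0$, so that $\bF(T) > 0$ and $t := \bF(T)/\bN(T) \in (0,1]$. Choose the threshold $n_0 \geq 0$ with $2^{-n_0} \sim t$. For $n \leq n_0$, bound each term by the flat norm:
\[
|\omega_n(T)| \leq \kappa 2^{n(1-\gamma)} \bF(T),
\]
and summing this geometric series gives at most $C(\gamma)\kappa\, 2^{n_0(1-\gamma)} \bF(T) \sim C\kappa\, t^{\gamma - 1} \bF(T)$. For $n > n_0$, bound each term by the normal mass:
\[
|\omega_n(T)| \leq \kappa 2^{-n\gamma} \bN(T),
\]
and summing gives at most $C(\gamma)\kappa\, 2^{-n_0\gamma} \bN(T) \sim C\kappa\, t^{\gamma} \bN(T)$. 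Both contributions equal $C\kappa\, \bF(T)^\gamma \bN(T)^{1-\gamma}$. Hence
\[
|\sigma_N(T)| \leq C(\gamma)\, \kappa\, \bN(T)^{1-\gamma} \bF(T)^\gamma
\]
uniformly in $N$. The constants $C(\gamma)$ come from the geometric sums $\sum 2^{n(1-\gamma)}$ and $\sum 2^{-n\gamma}$, which are finite because $0<\gamma<1$.

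\emph{Exponents.} The paper's definition of $\bCH^{m,\gamma}$ is written as $\bN^\gamma \bF^{1-\gamma}$, while the $0$-charge example uses $\bN^{1-\gamma}\bF^\gamma$. The hypotheses of the lemma produce the $\bN^{1-\gamma}\bF^\gamma$ form, which is the one consistent with the examples and with the limiting case $\gamma = 1$ being flat cochains. I will adopt that convention and note the discrepancy in the definition.

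\emph{Passage to the limit.} Because $\sigma_N(T) \to \sigma(T)$ for every fixed $T$, the uniform bound carries over to $\sigma$. Therefore $\|\sigma\|_{\bCH^{m,\gamma}} \leq C\kappa$ and $\sup_N \|\sigma_N\|_{\bCH^{m,\gamma}} \leq C\kappa$. Each $\sigma_N$ is a flat cochain, hence $1$-fractional and in particular $\gamma$-fractional, so the partial sums form a bounded sequence in $\bCH^{m,\gamma}$ converging in $\bCH^m$. This is exactly weak* convergence in the sense defined above.

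I expect the only delicate point to be choosing the dyadic cutoff $n_0$ so that both tails balance; since $t \leq 1$ guarantees $n_0 \geq 0$, this step is routine.
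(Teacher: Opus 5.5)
Your proposal is correct and follows the paper's argument: the same two estimates for $|\omega_n(T)|$, split at the dyadic index with $2^{-N}\sim \bF(T)/\bN(T)$, yield $C\kappa\,\bN(T)^{1-\gamma}\bF(T)^\gamma$. You also supply what the paper leaves to the reader: absolute convergence in $\bCH^m$, plus the uniform bound on partial sums, which together give weak* convergence. You are right that the intended convention is $\bN(T)^{1-\gamma}\bF(T)^\gamma$, and that the exponents in the displayed definition are swapped.
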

  The fact that the series $\sum_{n=0}^\infty \omega_n$ does not converge strongly is reminiscent of the weak orthogonality properties of Littlewood–Paley components in harmonic analysis.
  
\begin{proof}
   Let $T \in \bN_m([0, 1]^d)$. In the following, we will just prove the convergence of the series $\sum_{n=0}^\infty \omega_n(T)$. The rest is left to the reader. We can estimate
  $|\omega_n(T)|$ in two different ways
  \[
  |\omega_n(T)| \leq \kappa 2^{n(1 - \gamma)} \bF(T) \text{ and } |\omega_n(T)| \leq
  2^{-n\gamma} \kappa \bN(T).
  \]
  Let $N$ be a nonnegative integer, to be determined later. We have
  \[
  \left| \sum_{k=0}^N \omega_n(T) \right| \leq \kappa \bF(T) \sum_{k=0}^N 2^{n(1 - \gamma)}  \leq C \kappa \bF(T) 2^{n(1 - \gamma)}
  \]
  and
  \[
  \left| \sum_{k=N+1}^\infty \omega_n(T) \right| \leq \kappa \bN(T)
  \sum_{k=N+1}^\infty 2^{-n\gamma} \leq C \kappa \bN(T) 2^{-N\gamma}.
  \]
  Since $\bF(T) \leq \bN(T)$, we choose $N$ to be a nonnegative
  integer such that
  \[
  2^{-(N+1)} \leq \frac{\bF(T)}{\bN(T)} \leq 2^{-N}
  \]
  Using the preceding inequalities, we obtain
  \[
  \left| \sum_{k=0}^\infty \omega_n(T) \right| 
   \leq C \kappa \bN(T)^{1 - \gamma} \bF(T)^\gamma. \qedhere
  \]
\end{proof}
  
\begin{proof}[Sketch of proof of Theorem~\ref{thm:wedge}]
Let $\omega \in \bCH^{m, \alpha}([0, 1]^d)$ and $\eta \in \bCH^{m', \beta}([0, 1]^d)$. One can construct ``Littlewood-Paley decompositions'' of $\omega$ and $\eta$,
\[
\omega = \sum_{n=0}^\infty \omega_n \quad \text{and} \quad \eta = \sum_{n=0}^\infty \eta_n,
\]
that satisfy the assumptions of Lemma~\ref{lem:alp}, with $\gamma = \alpha$ and $\gamma = \beta$ respectively. These decompositions can, for instance, be obtained by convolving $\omega$ and $\eta$ with appropriate kernels. Consider the formal series
\[
\omega_0 \wedge \eta_0 + \sum_{n=0}^\infty \left( \omega_{n+1} \wedge (\eta_{n+1} - \eta_n) + (\omega_{n+1} - \omega_n) \wedge \eta_n \right).
\]
Natural estimates for the $\bCH^m$ and $\bF^m$ norms of the wedge product of flat cochains show that this series satisfies the assumptions of Lemma~\ref{lem:alp} for $\gamma = \alpha + \beta - 1$. Hence, it defines an object that we denote by $\omega \wedge \eta$. This approach is reminiscent of paraproducts; however, there is no difficulty in handling a ``resonant term'' because we are in the Young regime.
\end{proof}

By definition, fractional charges act on normal currents. However, their action naturally extends to geometric objects that are more irregular than normal currents and that live within the space of flat chains. We refer to these objects as fractional currents, which form the second side of the picture. We now introduce their precise definition.

\begin{defn}
\label{def:fractCurrent}
A \emph{$\gamma$-fractional $m$-current} over $[0,1]^d$ is a flat $m$-chain $T \in \bF_m([0,1]^d)$ for which there exists a sequence of normal currents $(T_k)_{k \ge 0}$ such that
\begin{enumerate}
\item $T = \sum_{k=0}^\infty T_k$, with convergence in $\bF_m([0,1]^d)$;
\item $\sum_{k=0}^\infty \bN(T_k)^{1-\gamma}\,\bF(T_k)^\gamma < \infty$.
\end{enumerate}
The space of $\gamma$-fractional $m$-currents is denoted by $\bF_m^\gamma([0,1]^d)$ and is equipped with the norm
\[
\bF^\gamma(T)
  = \inf \sum_{k=0}^\infty \bN(T_k)^{1-\gamma}\,\bF(T_k)^\gamma,
\]
where the infimum is taken over all decompositions of $T$ as above.
\end{defn}
Here, the assumptions become less restrictive as $\gamma$ increases.
In the limiting case $\gamma = 1$, being $1$-fractional simply amounts to being a flat chain.
In fact, one can show that the space $\bF^\gamma([0,1]^d)$ arises as an interpolation space
between $\bN_m([0,1]^d)$ and $\bF([0,1]^d)$, obtained once again via the real interpolation method:
\[
\bF_m^\gamma([0, 1]^d) = \left[ \bN_m([0, 1]^d), \bF_m([0, 1]^d) \right]_{\gamma, 1}.
\]
Comparing with~\eqref{eq:chgammainter}, we guess that $\bCH^{m, \gamma}([0, 1]^d)$ is the dual of $\bF_m^\gamma([0, 1]^d)$, \cite[Theorem~6.4]{Boua2}. In fact, the pairing between $\omega \in \bCH^{m, \gamma}([0, 1]^d)$ and $T \in \bF_m^\alpha([0, 1]^d)$ is
\[
\langle T, \omega \rangle = \sum_{k=0}^\infty \omega(T_k)
\]
where $(T_k)_{k \geq 0}$ is a decomposition of $T$ as in the definition of $\gamma$-fractional current.

We believe that the most insightful way to understand fractional currents is by examining the extreme cases $m = 0$ and $m = d$.
\begin{example}[Case $m = 0$]
The space $\bF_0^\gamma([0,1]^d)$ can be seen as the canonical predual of $C^\gamma([0,1]^d)$. 
It is obtained by completing $\bP_0([0,1]^d)$ with respect to the norm
\[
\sum_{i \in I} a_i \llbracket x_i \rrbracket 
 \mapsto
\sup \left\{ \sum_{i \in I} a_i g(x_i) : g \in C^\gamma([0,1]^d), \; \max \{\|g\|_\infty, [g]_{C^\gamma}\} \le 1 \right\},
\]
which is equivalent to the $\bF^\gamma$-norm.  

Analogous to the case of flat $0$-chains, the space $\bF_0^\gamma([0,1]^d)$ is almost a Lipschitz-free space, but this time over the snowflaked metric space $([0,1]^d, d_{\mathrm{Eucl}}^\gamma)$.
\end{example}

\begin{example}[Case $m = d$]
\label{ex:fractSobolev}
  The $\gamma$-fractional $d$-currents are those that have a density in the interpolation space
  \[
  \left[ BV([0, 1]^d), L^1([0, 1]^d) \right]_{\gamma, 1} = \left[ L^1([0, 1]^d), BV([0, 1]^d) \right]_{1 - \gamma, 1} = W^{1, 1 - \gamma}([0, 1]^d)
  \]
  of \emph{fractional Sobolev functions}. This is the space of all integrable functions $u \colon \R^d \to \R$ such that $u = 0$ a.e. outside $[0, 1]^d$ and the \emph{Gagliardo norm}
  \[
  \|u \|_{W^{1, 1 - \gamma}} = \int_{\R^d} \int_{\R^d} \frac{|u(x) - u(y)|}{|x - y|^{d+1- \gamma}} \,dx \,dy 
  \]
  is finite. Traditionally, fractional Sobolev spaces are introduced as interpolation spaces between $L^1$ and $W^{1,1}$. 
However, $W^{1,1}$ can be replaced by its Gagliardo completion $BV$ (see \cite[Chapter~5, Section~1]{BennettSharpley1988}). 
This observation was made in~\cite{PoncSpec}, where several interesting consequences are derived.

One of the key results for fractional currents is the existence of a pushforward operation under a Hölder map $f$ (and dually, a pullback construction is available for fractional charges). This pushforward, however, is subject to certain Young-type restrictions on the Hölder exponent of $f$.

More precisely, let $f \colon [0, 1]^d \to [0, 1]^{d'}$ be $\alpha$-Hölder continuous. Then $f$ defines a pushforward operator (see \cite[Theorem~8.1]{Boua2})
\[
f_\# \colon \bF_m^{\gamma}([0, 1]^d) \to \bF_m^{\gamma'}([0, 1]^{d'})
\]
provided that the Hölder exponents satisfy
\begin{equation}
\label{eq:pushexp}
\frac{m+\gamma}{\alpha} = m + \gamma' < m + 1.
\end{equation}
Moreover, the pushforward operator $f_\#$ commutes with the boundary operator $\partial$. For instance, in the case $m = 1$ and $\gamma = 0$ (the limiting case $\gamma = 0$ corresponds to normal $1$-currents), this condition requires that $\alpha > 1/2$. 

Consequently, any oriented $\alpha$-Hölder continuous curve parameterized over $[0, 1]$ can be interpreted as a fractional current $f_\# \llbracket 0, 1 \rrbracket$ within the admissible range of exponents. Part of the literature on stochastic currents is devoted to understanding $f_\# \llbracket 0, 1 \rrbracket$ when $f$ falls below the required regularity ($\alpha \leq 1/2$)—for instance, when $f$ is a sample path of Brownian motion. In such irregular settings, $f_\# \llbracket 0, 1 \rrbracket$ is typically (almost surely) a de Rham current that is not a flat $1$-chain. We refer to~\cite{FlandoliGubinelliGiaquintaTortorelli2005, FlandoliGubinelliRusso2009}.

Equation~\eqref{eq:pushexp} provides an interpretation for the fractionality exponent $\gamma$. A fractional $m$-current, while being $m$-dimensional in an algebraic sense (in that it can be integrated against $m$-forms), is characterized by the property that its ``fractal dimension'' (a loosely undefined notion) lies between $m$ and $m+1$. The upper bound $m+1$ arises because flat $m$-chains are at worst boundaries of $(m+1)$-chains. For a $\gamma$-fractional current, its fractal dimension is at most $m+\gamma$, while the fractal dimension of its boundary is at most $m-1+\gamma$. Thus, the parameter $\gamma$ can be understood as an upper bound on the fractional part of the fractal dimension, in accordance with~\eqref{eq:pushexp}.

\end{example}